\documentclass[11pt]{article}

\newcommand{\de}{\,\mathrm{d}}                               
\newcommand{\e}{\operatorname{e}}                               
                     
\newcommand{\inc}{\mathrm{inc}}

\newcommand{\p}{\partial}

\newcommand{\imag}{\mathrm{Im}\,}

\newcommand{\R}{\mathbb{R}}       
\newcommand{\C}{\mathbb{C}}       
\newcommand{\N}{\mathbb{N}}

\newcommand{\der}{{\rm D}}
\newcommand{\erf}{\operatorname{erf}}

\newcommand{\erfc}{\operatorname{erfc}}
\usepackage{xcolor}
\definecolor{capa}{RGB}{50, 90, 160}
\definecolor{delftblue}{RGB}{0,61,165}
\usepackage[colorlinks=true,
  linkcolor=delftblue,
  urlcolor=delftblue,
  citecolor=delftblue]{hyperref}

\usepackage{enumitem}

\usepackage{multirow}
\usepackage{amsmath}
\usepackage{amsfonts}
\usepackage{amsmath}
\usepackage{amssymb}  
\usepackage{graphicx}
\usepackage{caption}
\usepackage{mathrsfs}
\usepackage{upgreek}
\usepackage{amsthm}
\usepackage{subfig}
\usepackage{booktabs}
\usepackage{authblk}
\usepackage{cite}
\usepackage{bbm}
\usepackage{url}

\usepackage[]{algorithm2e}

\newtheorem{theorem}{Theorem}[section]

\newtheorem{proposition}[theorem]{Proposition}

\newtheorem{remark}[theorem]{Remark}

\title{High-order kernel regularization of singular and hypersingular \\Helmholtz boundary integral operators\thanks{C.P.-A.\ and S.T.\ gratefully acknowledge the organizers of the program \emph{Interfaces and Unfitted Discretization Methods} at the Institut Mittag-Leffler, where this collaboration was initiated.}}

\author[3]{Luiz M. Faria\thanks{\href{mailto:luiz.maltez-faria@inria.fr}{luiz.maltez-faria@inria.fr}}}
\author[1]{Carlos P\'erez-Arancibia\thanks{\href{mailto:c.a.perezarancibia@utwente.nl}{c.a.perezarancibia@utwente.nl}}}
\author[2]{Svetlana Tlupova\thanks{\href{mailto:tlupovs@farmingdale.edu}{tlupovs@farmingdale.edu}}}
\affil[3]{\small{POEMS, CNRS, Inria, ENSTA Paris, Institut Polytechnique de Paris, France}}
\affil[1]{\small{Department of Applied Mathematics, University of Twente, Enschede, the Netherlands}}
\affil[2]{\small{Department of Mathematics, Farmingdale State College, SUNY, Farmingdale, NY 11735, USA}}
\date{\today}

\begin{document}

\maketitle
\begin{abstract}

  This paper extends and analyzes the high-order kernel regularization framework of Beale \& Tlupova~\cite{beale2025high} to all four on-surface boundary integral operators of the Helmholtz Calderón calculus in three dimensions: the single-layer, double-layer, adjoint double-layer, and hypersingular operators. To the best of our knowledge, this work provides the first high-order kernel regularization of the hypersingular operator for both the Helmholtz and Laplace equations in three dimensions. The regularization replaces each singular kernel with a smooth modification constructed from error functions together with a polynomial correction whose coefficients are determined through moment conditions. Alongside the derivation of the regularizing functions, the paper provides a unified error analysis of the combined regularization and quadrature discretization procedure. By coupling the regularization parameter to the mesh size, the two error contributions can be balanced, leading to explicit overall convergence rates that depend jointly on the order of the regularization and the degree of exactness of the surface quadrature rule. A key practical feature of the method is its implementation simplicity. Once the regularizing functions are determined, the numerical task reduces entirely to the evaluation of smooth surface integrals using standard quadrature, without the need for element-local solves, singularity-specific precomputations, or specialized quadrature rules. Although the modified kernel is generally incompatible with kernel-specific fast methods, this limitation is addressed through $\mathcal{H}$-matrix acceleration, applicable in a black-box manner. Numerical examples---including verification of the predicted convergence rates and solution of sound-soft and sound-hard scattering problems by smooth obstacles---demonstrate the accuracy and practicality of the proposed methodology.
\end{abstract}
\section{Introduction}
Boundary integral equation (BIE) methods constitute a powerful numerical framework for the solution of boundary value problems arising in a wide range of applications. Many of the main challenges associated with the numerical evaluation of boundary integral operators over smooth surfaces are by now well understood and effectively addressed, including the nonlocal nature of the kernels---typically handled through fast algorithms---and the development of accurate quadrature rules and regularization techniques for singular and nearly singular integrals. In this context, several well-established approaches achieve high-order or even spectral convergence in three dimensions. While such high-order accuracy is highly desirable, as it allows coarse discretizations to reach a prescribed accuracy, these methods are often technically involved and computationally expensive, frequently constituting the most costly component of the numerical discretization of BIEs. Moreover, and perhaps more importantly, the complexity of their implementation can hinder the broader adoption of BIE methods in applied settings, where the primary focus lies on solving practical problems rather than on the development and maintenance of sophisticated numerical techniques.

A broad range of strategies has been proposed to address the accurate high-order evaluation of singular and nearly singular surface integrals, which is a central challenge in BIE methods. Classical singularity-cancellation methods apply local coordinate transformations---such as polar or Duffy-type changes of variables---to algebraically remove the integrand singularity prior to numerical integration; see, e.g.,~\cite{Sauter2010} for the Galerkin setting and~\cite{Bruno:2001ima} for high-order Nystr\"{o}m discretizations using partition-of-unity decompositions on surface patches. A separate family of approaches exploits global smooth parametrizations of the surface to obtain spectrally accurate methods~\cite{GaneshGraham2004,GaneshHawkins2006}. More recently, a combination of singularity-centered mesh refinement, precomputations, and a Chebyshev-mesh discretization of a non-overlapping representation of the surface was introduced in~\cite{BrunoGarza:20}, yielding high-order Nystr\"{o}m methods for both Helmholtz and Maxwell~\cite{hu2021chebyshev} scattering problems.
Locally corrected trapezoidal rules have emerged as an alternative, achieving high-order accuracy by adding local corrections near the singularity to an otherwise uniform trapezoidal rule, including unified treatments of singular and hypersingular operators on curved surfaces in three dimensions~\cite{WuMartinsson2021,WuMartinsson2023}. Quadrature by Expansion (QBX)~\cite{klockner2013quadrature} takes a different approach by expanding the layer potential in a local Taylor or multipole series centered at an off-surface point and evaluating the resulting smooth series on the boundary. The Density Interpolation Method (DIM)~\cite{faria2021general,perez2019planewave,perez2019harmonic} pursues a related goal by constructing, for each target point, a local PDE-compatible high-order interpolant of the density; Green's formula is then applied to this interpolant to produce an analytically computable correction that is subtracted from the original operator, thereby lowering the strength of the kernel singularity and reducing the integration to a smooth problem. While all these methods can achieve high-order or spectral accuracy in three dimensions, they generally involve specialized quadrature precomputations, element-local solves, or other structural features that add to the implementation complexity, and are typically tied to a specific surface discretization framework.

A class of kernel regularization techniques for the numerical evaluation of singular and nearly singular surface integrals in three dimensions was introduced by Beale and collaborators~\cite{beale2016simple}. These methods rely on mollifying the singular kernels through the use of error functions. While the original formulations were limited to low-order accuracy, recent developments have extended the approach to high order for the Laplace and Stokes equations: first through Richardson extrapolation~\cite{beale2024extrapolated}, and subsequently through suitable modifications of the regularizing functions that introduce additional degrees of freedom to enforce moment conditions, thereby achieving high-order convergence of the regularization error~\cite{beale2025high}.

One of the principal advantages of this class of techniques is their remarkable simplicity and flexibility. The regularization is carried out analytically at the level of the kernel: the singular kernel is replaced by a smooth modification that can be integrated to high order using standard, off-the-shelf surface quadrature rules, with no special treatment of the singularity required. This stands in contrast to other high-order singular integration methods of comparable accuracy and generality. Moreover, the approach is agnostic of the surface discretization: once the regularizing functions have been determined, any sufficiently accurate quadrature rule for smooth integrands can be employed, making the method applicable in a variety of discretization frameworks. 

In this work, we extend the methodology to all four boundary integral operators of the Helmholtz Calder\'{o}n calculus: the single-layer, double-layer, adjoint double-layer, and hypersingular operators, the latter understood in the sense of Hadamard finite-part integrals. A particularly novel aspect of this extension is the derivation of regularizing functions for the hypersingular operator: to the best of our knowledge, this constitutes the first high-order kernel regularization of the hypersingular operator for both the Helmholtz and Laplace equations, with the Laplace case following as a natural specialization of the present framework. The construction of these regularizing functions requires determining the polynomial correction to satisfy the appropriate moment conditions, a non-trivial task given the stronger ($\mathcal{O}(|x-y|^{-3})$) singularity of the hypersingular kernel, where $x,y\in\Gamma$ denote the target and source points on the surface, respectively.

A central contribution of this work is a rigorous, unified error analysis of the combined regularization and quadrature discretization procedure (Section~\ref{sec:discretization_error}). The analysis accounts for both sources of error: (i) the regularization error, of order $\mathcal{O}(\delta^\mathfrak{m})$ where $\delta>0$ is the regularization parameter, governed by the number of moment conditions imposed on the regularizing function and thus controllable to arbitrary order; and (ii) the quadrature discretization error arising from numerical integration of the smooth regularized integrand, which depends on the degree of exactness $\mathfrak{q}$ of the surface quadrature rule and on $\delta$. By optimally coupling $\delta$ to the mesh size $h$ via a derived power law $\delta \propto h^{\mu^\star}$, the two error contributions are balanced, yielding a combined convergence rate $\mathfrak{o}^\star$ that depends on both $\mathfrak{m}$ and $\mathfrak{q}$. This rate can in principle be made arbitrarily large by increasing either parameter; it approaches $\mathfrak{q}+1$ as $\mathfrak{m}\to\infty$ and approaches $\mathfrak{m}$ as $\mathfrak{q}\to\infty$. The analysis is developed for all four operators, is valid for all positive wavenumbers $k$, and applies to flexible discretizations of the surface into non-overlapping curved patches equipped with interpolatory quadrature rules. 

A practical consideration concerns the interaction of the kernel modification with fast algorithms for matrix-vector products. The regularized kernel differs from the original Helmholtz kernel by a correction that decays as a Gaussian modulated by a polynomial factor whose degree grows with the regularization order $\mathfrak{m}$. This correction is not compactly supported and is not sufficiently confined to a neighborhood of the singularity: it introduces a rather non-local modification of the kernel that persists throughout the surface. As a consequence, unfortunately, off-the-shelf fast multipole method (FMM) implementations~\cite{greengard1997new,gumerov2005fast}---which are specifically tailored to the original Laplace/Helmholtz kernel---cannot be directly applied to the modified kernel. Instead, we employ $\mathcal{H}$-matrix compression~\cite{Hackbusch:2015}, which operates in a black-box, kernel-agnostic fashion and is therefore fully compatible with the modified kernel regardless of its specific form.

The paper is organized as follows. Section~\ref{sec:general} presents the general framework of the proposed methodology and the main theoretical results that support it. Sections~\ref{sec:single_layer},~\ref{sec:double_layer}, and~\ref{sec:hyper} then specialize the approach to the single-layer, (adjoint) double-layer, and hypersingular operators, respectively. Section~\ref{sec:discretization_error} presents the analysis of the discretization error. Section~\ref{sec:numerics} reports numerical results validating the proposed regularization technique. Finally, Section~\ref{sec:conclusion} summarizes the conclusions and outlines directions for future work.

\section{Kernel regularization framework}\label{sec:general}
To treat all four boundary integral operators of the Helmholtz Calder\'{o}n calculus---the single-layer $\mathsf{S}$, double-layer $\mathsf{K}$, adjoint double-layer $\mathsf{K}^\top$, and hypersingular $\mathsf{T}$ operators---within a common framework, we develop the regularization theory for a generic operator of the form~\eqref{eq:BIOP} below; the operator-specific details are presented in Sections~\ref{sec:single_layer},~\ref{sec:double_layer}, and~\ref{sec:hyper}.

Consider a generic boundary integral operator of the form
\begin{equation}\label{eq:BIOP}
\mathsf B[\varphi](x) = {\rm p.f.}\!\int_{\Gamma}\frac{\Phi(k|x-y|)}{|x-y|^{2p+1}}\Psi(x,y)\varphi (y)\de s(y),\quad x\in\Gamma,\quad p\in\{0,1,2\},
\end{equation}
where $\Phi:\R\to\C$ is a smooth and even function, and $\Psi:\Gamma\times\Gamma\to \R$ is smooth in both arguments.
The exact expressions for these functions depend on the specific operator under consideration and will be defined in detail in the subsequent sections. The initials “p.f.” indicate that the integral in~\eqref{eq:BIOP} is to be understood as a Hadamard finite-part integral~\cite[sec.~3.2]{hsiao2021boundary}. When $p=2$ we have  $\Psi(x,y) = \mathcal{O}(|x-y|^2)$ as $|x-y| \to 0$, so the integral in~\eqref{eq:BIOP} is a Hadamard finite-part integral with an $\mathcal{O}(|x-y|^{-3})$--type singularity at $x = y \in \Gamma$, as occurs in the case of the hypersingular operator. Both the density $\varphi:\Gamma \to \C$ and the surface $\Gamma\subset\R^3$ are assumed to be smooth. 

\begin{remark}
  Both the density $\varphi:\Gamma \to \C$ and the surface $\Gamma\subset\R^3$ are required to possess a sufficient number of derivatives. Specifically, the regularization error estimate of order $\mathcal{O}(\delta^\mathfrak{m})$ requires $\varphi \in C^{\mathfrak{m}+2p-1}(\Gamma)$ and $\Gamma \in C^{\mathfrak{m}+2p-1}$, while the quadrature error analysis with degree of exactness $\mathfrak{q}$ considered in Section~\ref{sec:discretization_error}, additionally requires $\varphi \in C^{\mathfrak{q}+1}(\Gamma)$ and $\Gamma \in C^{\mathfrak{q}+1}$. Thus, the minimal regularity needed is $\varphi \in C^{\max(\mathfrak{m}+2p-1,\mathfrak{q}+1)}(\Gamma)$ and $\Gamma \in C^{\max(\mathfrak{m}+2p-1,\mathfrak{q})}$. For simplicity, we assume throughout that both $\varphi$ and $\Gamma$ are analytic, which ensures all regularity requirements are satisfied simultaneously for any choice of $\mathfrak{m}$ and~$\mathfrak{q}$.

\end{remark}

The sought regularized operator is defined by
\begin{equation}\label{eq:reg_operator}
\mathsf B_\delta[\varphi](x) = \int_{\Gamma}\frac{\Phi(k|x-y|)}{|x-y|^{2p+1}}\sigma_p\left(\frac{|x-y|}{\delta}\right)\Psi(x,y)\varphi (y)\de s(y),\quad x\in\Gamma,
\end{equation}
where  $\sigma_p$ is the regularization function and the parameter $\delta > 0$ will be referred to as the regularization parameter. Following~\cite{beale2024extrapolated,beale2025high},  $\sigma_p$ is assumed to take the form
\begin{subequations}\label{eq:reg_funcs}\begin{equation}
\sigma_p(t) :=\erf(t) +\frac{2}{\sqrt{\pi}}\e^{-t^2}P_p(t),
\end{equation}
where
\begin{equation}\label{eq:poly_term}
P_p(t):=t^{2p}\sum_{\ell=1}^n a_\ell t^{2\ell-1}+\begin{cases}
0,&p=0,\\
 -t,&p=1,\\ 
\displaystyle -t-\frac{2}{3}t^3,&p=2.\end{cases}
\end{equation}\end{subequations}
The number of terms $n \in \mathbb{N}$ in the sums above, on which $P_p$ and $\sigma_p$ implicitly depend, controls the order of the regularization and is chosen based on the prescribed accuracy and the singularity strength parameter~$p$.

The choice of the error function as the base for $\sigma_p$ is motivated by two key properties. First, since $\erf(t) \to 0$ as $t \to 0^+$ with a rate that matches the singularity of the integrand (see~\eqref{eq:limits}), the ratio $\sigma_p(r/\delta)/r^{2p+1}$ has a finite nonzero limit as $r\to 0^+$, so that the regularized integrand~\eqref{eq:reg_operator} extends smoothly across the diagonal $x = y$. Second, since $\erf(t) \to 1$ exponentially fast as $t \to \infty$, the correction $1 - \sigma_p(t)$ decays as a Gaussian modulated by a polynomial, ensuring that the regularization error is well-localized in a neighborhood of the target point $x$ of radius $\mathcal{O}(\delta)$. The polynomial factor $P_p$ introduces $n$ free coefficients $\{a_\ell\}_{\ell=1}^n$ that are determined by enforcing moment conditions (see Section~\ref{sec:single_layer} and subsequent sections), i.e., conditions that cancel successive leading-order terms in the asymptotic expansion of the regularization error as $\delta\to 0$, thereby raising the convergence order to $\mathcal{O}(\delta^\mathfrak{m})$ for any prescribed $\mathfrak{m}$.

We recall that the error function, 
\begin{equation}\label{eq:error_fun}
\erf(t):= \frac{2}{\sqrt{\pi}} \int_0^{t} \e^{-\tau^2} \de \tau,\quad t\in\R,
\end{equation}
satisfies 
$$
\erf(t) =\frac{2}{\sqrt{\pi}}\e^{-t^2}\left(t+\frac{2}{3}t^3+\mathcal O\left(t^5\right)\right)\text{ as }t\to0\quad\text{and}\quad \erf(t)=1+\e^{-t^2} \mathcal O\left(t^{-1}\right) \text{ as } t\to\infty. 
$$

The guiding principles behind the selected form of the regularizing function~\eqref{eq:reg_funcs} are that the limit
\begin{equation}\label{eq:limits}
\lim_{r\to 0+}\frac{\sigma_p(r/\delta)}{r^{2p+1}}=\begin{cases}\displaystyle\frac{2}{\sqrt{\pi }\delta }(1+a_1),&p=0,\medskip\\
\displaystyle\frac{2}{3 \sqrt{\pi } \delta^3}(2+3a_1),&p=1,\medskip\\
\displaystyle\frac{2}{15 \sqrt{\pi } \delta^5}(4+15a_1),&p=2,
\end{cases}
\end{equation} 
exists, and $\sigma_p(t) \to 1$ with Gaussian decay modulated by a polynomial factor (indeed, $\sigma_p(t) = 1 + \mathcal{O}(P_p(t)\e^{-t^2})$ as $t \to \infty$, where $P_p$ is the polynomial of degree $2(p+n)-1$ defined in~\eqref{eq:poly_term}).

Note that since $\Phi$ is an even function and~\eqref{eq:limits} holds, the integrand in~\eqref{eq:reg_operator} can be extended to a smooth function on the entire surface $\Gamma$. Consequently, the surface integral~\eqref{eq:reg_operator} in the definition of the regularized operator $\mathsf B_\delta$ can be evaluated with high accuracy using standard numerical integration techniques for smooth surfaces. 
 
To analyze the regularization error, we look at the difference:
\begin{equation}
\begin{split}
\mathsf B[\varphi](x)-\mathsf B_\delta[\varphi](x) =&~{\rm p.f.}\!\! \int_\Gamma \frac{\Phi(k|x-y|)}{|x-y|^{2p+1}}\left\{1-\sigma_p\left(\frac{|x-y|}{\delta}\right)\right\}\Psi(x,y)\varphi(y)\de s(y),\quad x\in\Gamma.    
\end{split}
\end{equation}
The goal is to derive conditions on $\sigma_p$ that ensure this quantity exhibits $\mathcal{O}(\delta^\mathfrak{m})$, $\mathfrak m\in\N$, asymptotic behavior as $\delta \to 0$. These conditions will lead to a system of equations from which the coefficients $\{a_\ell\}_{\ell=1}^n$ of the polynomial $P_p$ in~\eqref{eq:poly_term} can be computed as functions of the wavenumber $k\geq0$ and the regularization parameter $\delta>0$.

To conduct the analysis locally on the surface, we consider  a Monge parametrization centered at the target point $x\in\Gamma$. For the target $x$ on  the smooth surface $\Gamma$, with tangent vectors $e_1,e_2\in T_x\Gamma$ and unit normal $\nu(x)$, we define the Monge map around $x$ as the smooth function
\begin{equation}\label{eq:monge_patch}
\chi : U \subset \mathbb{R}^2 \to  \Gamma,
\qquad 
\chi(\alpha) := x + \alpha_1 e_1 + \alpha_2 e_2 + h(\alpha)\,\nu(x), \quad \alpha=(\alpha_1,\alpha_2),
\end{equation}
where $h:U\to\R$ is smooth and satisfies $h(0)=0$ and $\nabla h(0)=0$, i.e., the plane spanned by $e_1$ and $e_2$ coincides with $T_x\Gamma$, the tangent to the surface at $x$. Without loss of generality we assume $e_i\cdot e_j=\delta_{i,j}$, $i,j\in\{1,2\}$.
The domain $U$ is chosen such that the mapping $\chi:U\to\Gamma$ is a diffeomorphism onto its image, which defines the Monge patch $\chi(U)=\mathcal{P} \subset \Gamma$.

Furthermore, we define $w_p := 1 - \sigma_p$, and we choose $\delta > 0$ sufficiently small so that $w_p\left(|x-\chi (\alpha)|/\delta\right)$ vanishes as $\chi (\alpha)$ approaches $\partial \mathcal{P}$, up to negligible $\mathcal O\left(P_p(\frac{d}{\delta})\e^{-\left(\frac {d}{\delta}\right)^2}\right) $ contributions, where $d:=\operatorname{diam}\mathcal P := \sup_{x,y\in\mathcal P}|x-y|$. Under these conditions, the regularization error can be approximated as
\begin{equation}\label{eq:reg_error_int}
\mathsf B[\varphi](x)-\mathsf B_\delta[\varphi](x) \approx{\rm p.f.}\!\int_{U}\frac{\Phi(k|x-\chi(\alpha)|)}{|x-\chi(\alpha)|^{2p+1}}w_p\left(\frac{|x-\chi(\alpha)|}{\delta}\right)\phi(\alpha)\de \alpha ,
\end{equation}
where 
\begin{equation}\label{eq:smooth_dens}
\phi(\alpha):=\varphi(\chi(\alpha))\Psi(x,\chi(\alpha)) \left|\frac{\p\chi(\alpha)}{\p {\alpha_1}}\times\frac{\p\chi(\alpha)}{\p{\alpha_2}}\right|.
\end{equation}

To estimate the integral on the right-hand side of~\eqref{eq:reg_error_int}, we further localize the analysis by introducing a suitable reparametrization in terms of the target–source distance $|x-\chi (\alpha)|$, following the approach introduced in~\cite{beale2024extrapolated}. The following proposition defines this reparametrization, establishes its invertibility, and allows it to be used as a change of variables in the error integral~\eqref{eq:reg_error_int}.

\begin{proposition}\label{prop:mapping}Consider the mapping 
\begin{equation}\label{eq:change_variables}
\xi(\alpha):=\begin{cases}\displaystyle |x-\chi(\alpha)| \frac{\alpha}{|\alpha|},&\alpha\in U\setminus\{0\},\\
 0,&\alpha=0, \end{cases}   
\end{equation}
where $\chi $ denotes the parametrization introduced in~\eqref{eq:monge_patch}.
Then, there exists an open set $U_0 \subset U$ with $0\in U_0$, such that $\xi|_{U_0}:U_0\to \xi(U_0)$ is a $C^1$ diffeomorphism.
\end{proposition}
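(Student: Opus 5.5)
The plan is to write $\xi(\alpha)=\alpha\,\rho(\alpha)$, where $\rho$ is a smooth positive function near the origin with $\rho(0)=1$. Then $\xi$ is $C^1$ (indeed as smooth as $h$ permits), $D\xi(0)=I$, and the inverse function theorem gives the conclusion.

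Since $x-\chi(\alpha)=-(\alpha_1e_1+\alpha_2e_2+h(\alpha)\nu(x))$ and $e_1,e_2,\nu(x)$ are orthonormal,
\[
|x-\chi(\alpha)|^2=|\alpha|^2+h(\alpha)^2 .
\]
Because $h(0)=0$ and $\nabla h(0)=0$, Taylor's theorem with integral remainder gives
\[
h(\alpha)=\sum_{i,j}\alpha_i\alpha_j H_{ij}(\alpha), \qquad H_{ij}(\alpha)=\int_0^1(1-t)\,\p_i\p_j h(t\alpha)\de t .
\]
The functions $H_{ij}$ are continuous, and $C^{r-2}$ if $h\in C^r$; under the standing analyticity assumption they are smooth. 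Hence $h(\alpha)=|\alpha|^2 g(\alpha)$ with $g(\alpha)=\sum_{i,j}\hat\alpha_i\hat\alpha_jH_{ij}(\alpha)$ and $\hat\alpha=\alpha/|\alpha|$. The factor $g$ itself is not smooth at $0$, so I avoid dividing by $|\alpha|^2$ and write directly
\[
\frac{h(\alpha)^2}{|\alpha|^2}=\frac{\bigl(\sum_{i,j}\alpha_i\alpha_jH_{ij}(\alpha)\bigr)^2}{|\alpha|^2}=:q(\alpha).
\]

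The main obstacle is the smoothness of $\rho(\alpha)=\sqrt{1+q(\alpha)}$ at $\alpha=0$, since $q$ is a quartic form divided by $|\alpha|^2$. For the $C^1$ claim it is enough to check that $q$ is $C^1$ with $q(0)=0$ and $\nabla q(0)=0$. Write $q=N(\alpha)/|\alpha|^2$ with $N=\mathcal O(|\alpha|^4)$. Since $N$ is $C^1$ with $\nabla N=\mathcal O(|\alpha|^3)$, we get $|q|\le C|\alpha|^2$ and
\[
|\nabla q|\le |\nabla N|/|\alpha|^2+2|N|/|\alpha|^3=\mathcal O(|\alpha|).
\]
So $q\in C^1(U)$, provided $h\in C^3$ so that $H_{ij}\in C^1$, and $q(0)=0$, $\nabla q(0)=0$. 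Consequently $\rho=\sqrt{1+q}$ is $C^1$ near $0$ with $\rho(0)=1$ and $\nabla\rho(0)=0$, and $\rho>0$ there.

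Then $\xi(\alpha)=\alpha\rho(\alpha)$ is valid for $\alpha\neq0$ by definition, and it agrees with $\xi(0)=0$ at the origin. Its Jacobian is
\[
D\xi(\alpha)=\rho(\alpha)I+\alpha\otimes\nabla\rho(\alpha),
\]
which is continuous and equals $I$ at $\alpha=0$. The inverse function theorem therefore yields an open $U_0\ni0$ on which $\xi|_{U_0}$ is a $C^1$ diffeomorphism onto the open set $\xi(U_0)$.

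I would also remark on higher regularity, which is useful later for the error expansion. If $h$ is smooth, $q$ is smooth in polar-type variables, and more precisely $\xi$ is smooth away from $0$. Near $0$, $\xi$ is smooth after the blow-up $\alpha=r\theta$, which is what the subsequent analysis will actually use. Only the $C^1$ statement is claimed here, and it follows from the estimates above.
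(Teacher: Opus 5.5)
Your proof is correct and follows essentially the paper's route: factor $\xi(\alpha)=\rho(\alpha)\,\alpha$ with $\rho=\sqrt{1+h^2/|\alpha|^2}$, use $h=\mathcal{O}(|\alpha|^2)$ and $\nabla h=\mathcal{O}(|\alpha|)$ to obtain a $C^1$ extension, and then apply the inverse function theorem. Where the paper only asserts the $C^1$ extension, you prove it via the estimate $|\nabla q|=\mathcal{O}(|\alpha|)$ for $q=h^2/|\alpha|^2$, and you check $\der\xi(0)=I$ directly instead of using the paper's formula $\operatorname{det}\der\xi=1+h\,\nabla h\cdot\alpha/|\alpha|^2$. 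Note that $h\in C^2$ already suffices for your estimates, so the $C^3$ assumption and the $H_{ij}$ detour are unnecessary.
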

\begin{proof}
The mapping under consideration can be expressed as
$$
\xi(\alpha) =   q(\alpha) \alpha,\quad q(\alpha):=\sqrt{1+\frac{h^2(\alpha)}{|\alpha|^2}},\quad \alpha\in U\setminus\{0\},
$$
where we have used the fact that $|x-\chi(\alpha)| =  \sqrt{|\alpha|^2+h^2(\alpha)}.$  
A second-order Taylor expansion about $\alpha = 0$ of the height function in~\eqref{eq:monge_patch} yields 
 $h(\alpha) = \mathcal{O}(|\alpha|^2)$ as $|\alpha|\to 0$, and similarly $\nabla h(\alpha) = \mathcal{O}(\alpha)$ as $|\alpha|\to 0$. It then follows that $q$ admits a $C^1$ extension to $U$, and so does $\xi$. Moreover, a direct computation yields 
 $$
 \operatorname{det}\der\xi(\alpha) =q^2(\alpha)+q(\alpha)\nabla q(\alpha)\cdot\alpha=
  1+\frac{h(\alpha)}{|\alpha|^2}\nabla h(\alpha)\cdot\alpha =1+\mathcal{O}(|\alpha|^2)\quad \text{as}\quad |\alpha|\to0
$$
establishing, via the application of the standard inverse function theorem~\cite{krantz2002implicit}, the existence of an open set $U_0\subset U$, $0\in U_0$, on which $\xi$ is a $C^1$ diffeomorphism.
\end{proof}

\begin{remark}In what follows, we assume that the domains of definition of both the Monge parametrization~\eqref{eq:monge_patch} and the reparametrization~\eqref{eq:change_variables} coincide, that is, $U=U_0$, where $U_0$ is the open set introduced in Proposition~\ref{prop:mapping}. This assumption entails no loss of generality, since the surface atlas and the parameter $\delta>0$ can be suitably redefined to ensure both this condition and the approximability requirement~\eqref{eq:reg_error_int}.
\end{remark} 

It follows from Proposition~\ref{prop:mapping} that we can perform the change of variables $\alpha=\alpha(\xi)$ in~\eqref{eq:reg_error_int}, which yields
\begin{equation}\label{eq:error_integral}
\mathsf B[\varphi](x)-\mathsf  B_\delta[\varphi](x) \approx {\rm p.f.}\!\!\int_{\xi(U)}\frac{\Phi(k|\xi|)}{|\xi|^{2p+1}}w_p\left(\frac{|\xi|}{\delta}\right)\tilde\phi(\xi)\de \xi,    
\end{equation}
where $\tilde\phi$ is the function defined below in~\eqref{eq:non_smooth_dens}. Note that, in deriving \eqref{eq:error_integral}, we also used the identity $|\xi(\alpha)| = r(\alpha)$.

Next we establish a key power-series expansion, whose proof is adapted from~\cite{beale2024extrapolated}. This expansion, that is here presented for arbitrarily high-order, will then be used to derive the equations that determine the unknown coefficients defining the desired regularizing function $\sigma_p$.
\begin{proposition}\label{prop:expansion}
Let $\tilde\phi:\xi(U_0)\to \C$ be defined by
\begin{equation}\label{eq:non_smooth_dens}
\tilde\phi(\xi) :=\phi(\alpha(\xi))\left|\operatorname{det}\der \alpha(\xi)\right|,
\end{equation}
where $\phi$ is the smooth function introduced in~\eqref{eq:smooth_dens}, and $\alpha:\xi(U_0)\to U_0$ denotes the inverse of the mapping~$\xi$ defined in~\eqref{eq:change_variables}. Then, for any $m\in\mathbb{N}_{\ge 2}$, $\tilde\phi$ admits the expansion
\begin{equation}\label{eq:imp_expansion}
  \tilde{\phi}(\xi)=\phi(0)+\nabla \phi(0) \cdot \xi+\sum_{j=2}^{2 m-1}|\xi|^j\left(\sum_{\beta:|\beta|=j} c_{\beta, j} u^\beta+\sum_{\beta:|\beta|=j+2} d_{\beta, j} u^\beta\right)+\mathcal{O}\left(|\xi|^{2 m}\right) \quad \text { as } \quad|\xi| \to 0,
\end{equation}
for suitable coefficients $\{c_{\beta,j}\}$  and $\{d_{\beta, j}\}$. Here, $u := \xi/|\xi|$, and multi-indices $\beta\in\mathbb{N}_0^2$ are understood in the standard sense.

\end{proposition}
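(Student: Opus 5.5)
The plan is to write the inverse map $\alpha(\xi)$ in polar-type form $\alpha = \xi\, g(\xi)$ and expand everything in powers of $s:=|\xi|$ with coefficients that are polynomials in $u=\xi/|\xi|$. Since $\xi = q(\alpha)\alpha$, with $q$ depending only on $h(\alpha)^2/|\alpha|^2$, $\alpha$ and $\xi$ are parallel. Writing $\alpha = \rho u$ with $\rho=|\alpha|$, we have
\[
s^2=\rho^2+h(\rho u)^2 .
\]
Taylor expanding the analytic $h$, we get $h(\rho u)=\sum_{j\ge 2}\rho^j H_j(u)$, where $H_j$ is a homogeneous polynomial of degree $j$ in $u$. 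Hence $h^2/\rho^2=\sum_{j\ge 2}\rho^{j}G_{j}(u)$ with $G_j$ homogeneous of degree $j+2$. Inverting $s=\rho\sqrt{1+\rho^2 G_2+\dots}$ as a power series in $\rho$ (Lagrange inversion, or simple recursion) gives
\[
\rho = s + \sum_{j\ge 3} s^j R_j(u),
\]
where each $R_j$ is a polynomial in $u$ whose monomials have degree congruent to $j+1$ mod $2$ and at most $j+1$. So $\alpha = \xi + \sum_j |\xi|^j R_j(u)u$. The key bookkeeping observation is that $u_1^2+u_2^2=1$. Any monomial of degree $d\le j+2$ with $d\equiv j$ mod $2$ can therefore be rewritten as a combination of monomials of degree exactly $j$ and exactly $j+2$: multiply by powers of $|u|^2=1$ to raise the degree. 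This is precisely the two-sum structure in \eqref{eq:imp_expansion}, so I will track, at each power $s^j$, the parity and maximal degree of the angular polynomial.

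Next I expand $\phi(\alpha(\xi))$. Since $\phi$ is smooth, $\phi(\alpha)=\sum_{|\gamma|<2m}\partial^\gamma\phi(0)\alpha^\gamma/\gamma!+\mathcal O(|\alpha|^{2m})$. Substituting $\alpha=s u(1+\sum_{j\ge2} s^j R_{j+1}(u))$, each term $\alpha^\gamma$ becomes $s^{|\gamma|}u^\gamma$ times a series in $s$. Collecting powers, the coefficient of $s^j$ is a polynomial in $u$ with parity $j$ and degree at most $j+2$. For instance, the correction from $R$ adds two to the power of $s$ and raises the angular degree by at most two relative to the leading term. The terms of order $s^0$ and $s^1$ are exactly $\phi(0)$ and $\nabla\phi(0)\cdot\xi$, because the corrections to $\alpha$ begin at $s^3$.

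For the Jacobian, $\det \der\alpha(\xi)=1/\det\der\xi(\alpha(\xi))$, and by the proof of Proposition~\ref{prop:mapping},
\[
\det\der\xi = 1+\frac{h}{\rho^2}\nabla h\cdot\alpha .
\]
With the same homogeneous expansions, $(h/\rho^2)\nabla h\cdot\alpha=\sum_{j\ge2}\rho^j K_j(u)$, where $K_j$ is homogeneous of degree $j+2$, since $\nabla h\cdot\alpha = \sum_j j\rho^jH_j$. Inverting the geometric series and substituting $\rho=\rho(s,u)$ gives $|\det\der\alpha|=1+\sum_{j\ge2}s^jJ_j(u)$ with the same parity/degree property; the determinant is positive near $0$. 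Multiplying the two expansions preserves the property: degrees add and powers add, and the "excess" degree stays at most two because the $J$ factor starts at $s^2$ with degree $4$, and so on. I will prove this by induction, stating the invariant as "coefficient of $s^j$ has parity $j$ and degree $\le j+2$," which is closed under products.

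The main obstacle is verifying that this invariant really survives all compositions, especially the series inversion for $\rho(s)$ and products of many correction terms. In a product of two corrections, $s^{a}$ with degree $\le a+2$ times $s^b$ with degree $\le b+2$ has degree $\le a+b+4$, which seems to break the bound. I will handle this by a sharper invariant. Corrections in $h^2/\rho^2$ and in the Jacobian are $\rho^j$ times homogeneous polynomials of degree exactly $j+2$, i.e., "each factor of $h$ contributes $\rho^k u$-degree $k$." Equivalently, every term arises as $s^{j}$ times a polynomial of degree at most $j + 2\cdot(\text{number of } h\text{-factors}) - 2(\ldots)$. If this degree bound genuinely fails, I will fall back on the observation that at order $s^j$ all degrees are $\equiv j$ mod $2$. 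By reducing with $|u|^2=1$, the polynomial can then be expressed using monomials of degree $j$ and $j+2$ whenever its degree is $\le j+2$. I expect that checking the degree bound (essentially that $h^2$ contributes $\rho^{4}$ with degree $4$, i.e., excess $0$ relative to $\rho^2\cdot$degree-$2$) is the delicate point. The remainder estimate $\mathcal O(|\xi|^{2m})$ then follows from truncating all Taylor expansions at order $2m$ and using that $\alpha(\xi)=\mathcal O(|\xi|)$.
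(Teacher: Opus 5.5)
There is a genuine gap, exactly at the step you flagged as delicate, and it cannot be closed. The degree half of your invariant is false from order $4$ on. This covers the coefficient of $|\xi|^j$ having degree $\le j+2$, your claim that $R_j$ has degree $\le j+1$, and the same bound for the Jacobian coefficients $J_j$. So no sharper invariant can rescue it, and your fallback is circular because it presupposes that very bound.

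To see the failure, write $h(\rho u)=\sum_{k\ge2}\rho^kH_k(u)$ and use the exact relations $\rho^2=s^2-h(\rho u)^2$ and $|\det\der\alpha(\xi)|=\rho\,\partial_s\rho/s=\partial_s(\rho^2)/(2s)$. One finds $\rho=s-\tfrac12H_2^2s^3-H_2H_3s^4+\bigl(\tfrac78H_2^4-\tfrac12H_3^2-H_2H_4\bigr)s^5+\cdots$, so your $R_5$ has degree $8$, and
\[
|\det\der\alpha(\xi)|=1-2H_2^2|\xi|^2-5H_2H_3|\xi|^3+\bigl(6H_2^4-3H_3^2-6H_2H_4\bigr)|\xi|^4+\mathcal O(|\xi|^5).
\]
All other contributions at order $|\xi|^4$ (non-constant Taylor terms of $\phi$, alone or multiplied by these corrections) have degree $\le 6$. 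Hence the $|\xi|^4$ coefficient of $\tilde\phi$ is $6\phi(0)H_2(u)^4$ plus a polynomial of degree $\le6$. In principal directions, with $u=(\cos\theta,\sin\theta)$, one has $H_2=\tfrac{\kappa_1+\kappa_2}{4}+\tfrac{\kappa_1-\kappa_2}{4}\cos2\theta$. So this coefficient has $\cos8\theta$-mode $6\phi(0)(\kappa_1-\kappa_2)^4/2048$. By contrast, $\sum_{|\beta|=4}c_{\beta,4}u^\beta+\sum_{|\beta|=6}d_{\beta,4}u^\beta$ is a trigonometric polynomial of degree $\le6$. Coefficients of an expansion in powers of $|\xi|$ at fixed $u$ are unique. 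Therefore \eqref{eq:imp_expansion} fails for every $m\ge3$ at any non-umbilic point with $\phi(0)\neq0$; only for $m=2$ does the bound hold.

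You are not missing an idea the paper supplies: its proof makes the same unjustified step. Its expansion $\tilde q=1+\sum_{|\beta|\ge4}c^{(1)}_\beta u^\beta|\alpha|^{|\beta|-2}$, and likewise those for $\tilde q^{-j}$, $\mu$ and $\det\der\alpha$, ignores that the term $-\tfrac18(h/|\alpha|)^4$ contributes $-\tfrac18H_2(u)^4|\alpha|^4$, which has degree $8$. Otherwise your route (radial inversion, Taylor expansion of $\phi$, Jacobian, products) is the paper's.

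What is true is the parity version, and it is all that \eqref{eq:error_polar}--\eqref{eq:expan_error} actually use, namely that the angular integrals vanish for odd $j$. It says that the coefficient $P_j(u)$ of $|\xi|^j$ is a polynomial of degree at most $2j$ with $P_j(-u)=(-1)^jP_j(u)$, and that $P_0=\phi(0)$, $P_1(u)=\nabla\phi(0)\cdot u$. Your machinery proves this cleanly once you replace the degree bound by the symmetry $F(-s,-u)=F(s,u)$ of the two-variable series:
\begin{itemize}
\item Every correction factor has the form $\rho^kG_k(u)$ with $k\ge2$ and $G_k$ homogeneous of degree $k+2$, so it has the symmetry.
\item The symmetry is preserved under products.
\item It is preserved under the inversion $s\mapsto\rho$: if $s=\rho\,Q(\rho,u)$ with $Q$ symmetric, then $-\rho(-s,-u)$ solves the same equation, so $\rho/s$ is symmetric.
\item It is preserved under composition with the Taylor polynomial of $\phi$.
\item For the degree bound, $n$ correction factors carry $s$-power at least $2n$ and excess degree exactly $2n$, which gives degree $\le 2j$.
\end{itemize}
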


\begin{proof}
We first examine in more detail the invertibility of the mapping~\eqref{eq:change_variables} to then develop the series expansion~\eqref{eq:imp_expansion}.  
In view of the fact that $u=\xi/|\xi| =\alpha/|\alpha|$, we only need to establish the invertibility of the radial part of the transformation. We then fix $u$ and consider the function $\tilde q(|\alpha|;u) = q(\alpha)$ given by
$$\tilde q(|\alpha|;u) := \frac{|\xi(\alpha)|}{|\alpha|}= \sqrt{1+\left(\frac{h(|\alpha|u)}{|\alpha|}\right)^2}.$$

We let $f(|\alpha|;u) = |\xi(\alpha)| = |\alpha|\tilde q(|\alpha|;u)$ and construct the power series expansion of the inverse function $g(|\xi|;u) = f^{-1}(|\xi|;u)$ (in the first variable) by applying the Lagrange inversion theorem~\cite{krantz2002implicit}. Note that the sought inverse mapping yields $\alpha(\xi) = g(|\xi|;u)u$. 

Lagrange inversion theorem states that there exists $\varrho_g>0$ such  that
\begin{equation}\label{eq:inv_func}
g(|\xi|;u) = \sum_{j = 1}^\infty g_j(u) |\xi|^j,\qquad |\xi|\in[0,\varrho_g),\qquad g_j(u)=\frac{1}{j}\left[|\alpha|^{j-1}\right] \tilde q(|\alpha|;u)^{-j},\quad j\in\N.
\end{equation}
(Here, $\left[x^{j-1}\right] f(x)$ denotes the coefficient of $x^{j-1}$ in the Taylor expansion of $f$ around $x=0$.)

To determine the coefficients $\{g_j(u)\}_{j\in\N}$, we need to look into the power series expansion of $\tilde q(|\alpha|;u)^{-j}$, for $j\in\N_0$, around $\alpha=0$.  Leveraging the assumed smoothness of the surface $\Gamma$ and the fact that both $h(0)$ and $\nabla h(0)$ vanish, we  have that there exist positive real numbers $\varrho_h$, $\varrho'$, and $\varrho''$, such that
\begin{align}
h(|\alpha| u) =&~ \sum_{\beta:|\beta|\geq2 }\frac{\der^\beta h(0) }{\beta !}u^\beta |\alpha|^{|\beta|},& |\alpha|\in[0,\varrho_h), \nonumber\\
\left(\frac{h(|\alpha| u)}{|\alpha|}\right)^2=&~\sum_{\beta:|\beta|\geq 4} c^{(0)}_{\beta}u^\beta  |\alpha|^{|\beta|-2},& |\alpha|\in[0,\varrho_h), \nonumber\\
\tilde q(|\alpha|;u) = \left(1+\left(\frac{h(|\alpha| u)}{|\alpha|}\right)^2\right)^{\frac12}=&~1+\sum_{\beta:|\beta|\geq4} c^{(1)}_{\beta}u^\beta  |\alpha|^{|\beta|-2},&  |\alpha|\in[0,\varrho'), \nonumber\\
\tilde q(|\alpha|;u)^{-1}=\left(1+\left(\frac{h(|\alpha| u)}{|\alpha|}\right)^2\right)^{-\frac12}=&~1+\sum_{\beta:|\beta|\geq 4} c^{(2)}_{\beta}u^\beta  |\alpha|^{|\beta|-2} ,& |\alpha|\in[0,\varrho''),\nonumber\\
\tilde q(|\alpha|;u)^{-j}=\left(1+\left(\frac{h(|\alpha| u)}{|\alpha|}\right)^2\right)^{-\frac{j}{2}}=&~1+\sum_{\beta:|\beta|\geq 4} c^{(3)}_{\beta,j}u^\beta  |\alpha|^{|\beta|-2} ,& |\alpha|\in[0,\varrho''),\quad j\in\N,\label{eq:psi_j}
\end{align}
 where the coefficients $\{c^{(l)}_\beta\}$, $l\in\{0,1,2,3\}$, depend on $\{\der^\beta h(0)/\beta!\}$ and the derivatives of the (real-analytic) functions  $\sqrt{1+t^2}$ and $1/\sqrt{1+t^2}$ at $t=0$. 
In view of~\eqref{eq:psi_j}, the coefficients in the expansion~\eqref{eq:inv_func} are  given by
$$
g_j(u)=\begin{cases} 1,&j=1,\smallskip\\
0,&j=2,\smallskip\\
\displaystyle \sum_{\beta:|\beta|=j+1} \frac{c^{(3)}_{\beta,j}}{j} u^\beta,&j\geq 3.
\end{cases}
$$
Replacing the coefficients in~\eqref{eq:inv_func}, we  conclude that the inverse mapping can be expressed as
$
 \alpha(\xi) =g(|\xi|;u)u = \mu(|\xi|,u)\,\xi$
where 
\begin{equation}\label{eq:exp_mu}\begin{split}
\mu(|\xi|,u) :=&~ 1+ \sum_{j= 3}^\infty\sum_{\beta:|\beta|= j+1}\frac{c^{(3)}_{\beta,j}}{j}u^\beta |\xi|^{|\beta|-2}
=1+ \sum_{\beta:|\beta|\geq 4}c^{(4)}_\beta u^\beta |\xi|^{|\beta|-2}, \quad |\xi|\in[0,\varrho_g).
\end{split}\end{equation}
In the last identity above we have combined the double sum into one with new coefficients $\{c^{(4)}_\beta\}$.

On the other hand, a direct computation yields that the Jacobian determinant of the inverse transformation is given by 
$$
\operatorname{det}\der\alpha(\xi)=\mu^2(|\xi|,u) +\mu(|\xi|,u)\nabla\mu(|\xi|,u)\cdot \xi = \mu(|\xi|,u)\left(\mu(|\xi|,u) +|\xi|\frac{\p\mu(|\xi|,u)}{\p|\xi|}\right).
$$
In view of the last identity above and~\eqref{eq:exp_mu}, we obtain
\begin{align}\label{eq:jac_det}
\operatorname{det}\der\alpha(\xi) =&~ 1+\sum_{\beta:|\beta|\geq 4}c^{(5)}_\beta u^\beta |\xi|^{|\beta|-2},
\end{align}
for $|\xi|\in[0,\varrho_g)$.  Then, choosing $\varrho \in [0,\varrho_g]$ sufficiently small so that $\operatorname{det}\der\alpha(\xi)$ in~\eqref{eq:jac_det} remains positive, we obtain---using again the expansion~\eqref{eq:exp_mu}---that for every multi-index $\gamma\in\N^2_0$ it holds
$$
\mu(|\xi|,u)^{|\gamma|}|\operatorname{det}\der\alpha(\xi)| =~ 1+\sum_{\beta:|\beta|\geq 4}c^{(6)}_{\beta,\gamma} u^\beta |\xi|^{|\beta|-2},\nonumber\\
$$
and hence
\begin{align}
\alpha(\xi)^\gamma |\operatorname{det}\der\alpha(\xi)|=&~ \xi^\gamma+\sum_{\beta:|\beta|\geq 4}c^{(6)}_{\beta,\gamma} u^{\beta+\gamma} |\xi|^{|\beta|+|\gamma|-2},\label{eq:series_expan_2}
\end{align}
for all $|\xi|\in[0,\varrho)$. Clearly, the coefficients $\{c^{(5)}_\beta\}$ and $\{c_{\beta,\gamma}^{(6)}\}$ in~\eqref{eq:jac_det} and~\eqref{eq:series_expan_2} depend on the coefficients $\{c_\beta^{(4)}\}$ in~\eqref{eq:exp_mu} and the multi-index $\gamma$.

Therefore, from the Taylor expansion
$$
\phi(\xi)  = \sum_{\gamma:|\gamma|\leq 2m-1} \frac{\der^{\gamma}{\phi}(0)}{\gamma!}\alpha^{\gamma} + \mathcal{O}(|\alpha|^{2m})\quad\text{as}\quad |\alpha|\to0,
$$
of the smooth function defined in~\eqref{eq:smooth_dens},  we obtain from~\eqref{eq:series_expan_2} that the function in~\eqref{eq:non_smooth_dens} admits the expansion
\begin{align}
\tilde\phi(\xi)  =&  \sum_{\gamma:|\gamma|\leq 2m-1} \frac{\der^{\gamma}{\phi}(0)}{\gamma!}\left( \xi^\gamma+\sum_{\beta:|\beta|\geq 4}c^{(6)}_{\beta,\gamma} u^{\beta+\gamma} |\xi|^{|\beta|+|\gamma|-2}\right) + \mathcal{O}(|\xi|^{2m})\nonumber\\
 =&  \sum_{\gamma:|\gamma|\leq 2m-1} \frac{\der^{\gamma}{\phi}(0)}{\gamma!}\xi^\gamma+ \sum_{\gamma:|\gamma|\leq 2m-1} \frac{\der^{\gamma}{\phi}(0)}{\gamma!}\sum_{\beta:|\beta|\geq 4}c^{(6)}_{\beta,\gamma} u^{\beta+\gamma} |\xi|^{|\beta|+|\gamma|-2} + \mathcal{O}(|\xi|^{2m})\nonumber\\
 =&\sum_{\gamma: |\gamma|\leq 2m-1} \frac{\der^{\gamma}{\phi}(0)}{\gamma!} \xi^\gamma+\frac{1}{|\xi|^2}\sum_{\beta:|\beta|\geq 4}c^{(7)}_\beta \xi^{\beta}+ \mathcal{O}(|\xi|^{2m})\nonumber\\
 =&\sum_{\gamma:|\gamma|\leq 2m-1} \frac{\der^{\gamma}{\phi}(0)}{\gamma!} \xi^\gamma+\frac{1}{|\xi|^2}\sum_{\beta:4\leq|\beta|\leq 2m+1} c^{(7)}_\beta \xi^{\beta} + \mathcal{O}(|\xi|^{2m})\quad\text{as}\quad |\xi|\to0, \label{eq:proto_series}
\end{align}
where the coefficients $\{c^{(7)}_\beta\}$ depend on $\{c^{(6)}_{\beta,\gamma}\}$ in~\eqref{eq:series_expan_2}.

Finally, to complete the proof, we replace $\xi = |\xi|u$ in~\eqref{eq:proto_series}, to obtain that the first sum can be expressed as 
$$
\sum_{\gamma:|\gamma|\leq 2m-1} \frac{\der^{\gamma}{\phi}(0)}{\gamma!} \xi^\gamma =\phi(0)+\nabla\phi(0)\cdot\xi + \sum_{j=2}^{2m-1}|\xi|^j\sum_{\beta:|\beta|=j}c_{\beta,j} u^\beta,
$$
while, similarly,  the second sum can be written as
$$
\frac{1}{|\xi|^2}\sum_{\beta:4\leq|\beta|\leq 2m+1} c^{(7)}_\beta \xi^{\beta} = \sum_{j=2}^{2m-1}|\xi|^j\sum_{\beta:|\beta|=j+2} d_{\beta,j} u^{\beta},
$$
for suitable coefficients $\{c_{\beta,j}\}$ and $\{d_{\beta, j}\}$ depending on $\{\der^{\gamma}\phi(0)/\gamma!\}$ and $\{c^{(7)}_\beta\}$, respectively. The proof is now complete.
\end{proof}

Substituting the expansion~\eqref{eq:imp_expansion} into~\eqref{eq:error_integral} and rewriting the resulting expression in polar coordinates,
$$
\rho := |\xi|, \qquad 
\theta := \operatorname{atan}\!\left(\frac{\xi_2}{\xi_1}\right),
$$
we can approximate the error integral~\eqref{eq:error_integral} by
$$
\mathsf B[\varphi](x)-\mathsf B_\delta[\varphi](x) \approx E_r(\delta,k),
$$
where
\begin{equation}
\label{eq:error_polar}\begin{split}
E_r(\delta, k)=\sum_{j=0}^{2 m-1}\left\{{\rm p.f.}\!\! \int_0^{\infty} \Phi(k \rho) w_p\left(\frac{\rho}{\delta}\right) \rho^{j-2 p} \mathrm{~d} \rho\right\}\left\{\sum_{\beta:|\beta|=j} c_{\beta, j} \int_0^{2 \pi} \cos ^{\beta_1}(\theta) \sin ^{\beta_2}(\theta) \mathrm{d} \theta\right. \\
\left.+\sum_{\beta:|\beta|=j+2} d_{\beta, j} \int_0^{2 \pi} \cos ^{\beta_1}(\theta) \sin ^{\beta_2}(\theta) \mathrm{d} \theta\right\}+R_{2 m}(\delta, k), \quad m \in \mathbb{N},
\end{split}
\end{equation}
where we have extended the definition of  coefficients $\{c_{\beta,j}\}$ and $\{d_{\beta,j}\}$  to $j\in\{0,1\}$ in a suitable manner.

The residual term satisfies the estimate
\begin{equation}\label{eq:res_bound}
|R_{2m}(\delta,k)| \lesssim \left|\int_{0}^\infty\Phi(k\rho)w_p\left(\frac{\rho}{\delta}\right)\rho^{2(m-p)}\de \rho\right|=\left|I_{p,m}(\varkappa)\right|\delta^{2(m-p)+1},
\end{equation}
where
\begin{equation}\label{eq:scaled_wn}
\varkappa := \delta k
\end{equation} is the scaled wavenumber and $I_{p,j}$ is the moment integral  defined by
\begin{equation}\label{eq:important_integral_gen}
I_{p,j}(\varkappa) :={\rm p.f.}\!\int_0^\infty\Phi(\varkappa t)w_p(t)t^{2(j-p)}\de t,\quad j\in\N_0,\quad p\in\{0,1,2\}.
\end{equation}

Because $w_p$ is compactly supported, up to exponentially small errors, the extension of the radial integral to $[0,\infty)$ introduces only exponentially small errors, which are negligible in comparison to the desired $\mathcal{O}(\delta^\mathfrak m)$ errors. This justifies the extension of the radial integration domain to the unbounded interval in~\eqref{eq:error_polar} and~\eqref{eq:res_bound}.

Since the integrals with respect to $\theta$ in~\eqref{eq:error_polar} vanish identically whenever 
$|\beta| = \beta_1 + \beta_2$ is odd, only the even-indexed moments contribute. Hence, we may rewrite
the  error as
\begin{equation}\label{eq:expan_error}
E_r(\delta,k) = \sum_{j=0}^{m-1} \mu_j\delta^{2(j-p)+1} I_{p,j}(\varkappa) + R_{2m}(\delta,k),\quad m\in \N,
\end{equation} for some coefficients $\{\mu_j\}_{j=0}^{m-1}\subset\C$.

Our goal is to determine the coefficients $\big\{a_\ell(\varkappa)\big\}_{\ell=1}^n$ in the definition of $\sigma_p$ in~\eqref{eq:reg_funcs} in such a way that
\begin{equation}\label{eq:system_gen}
I_{p,j}(\varkappa) = {\rm p.f.}\!\int_0^\infty\Phi(\varkappa t)w_p(t)t^{2(j-p)}\de t=0,\quad \text{for}\quad  j\in\{0,1,\ldots,m-1\},   
\end{equation}
which, in view of~\eqref{eq:res_bound}, yields a regularization error $\epsilon(\delta, k)$ that satisfies the bound
\begin{equation}\label{eq:bound_reg_error}
|E_r(\delta, k)| \lesssim  |I_{p,m}(\varkappa)|\delta^{\mathfrak m},\qquad \mathfrak m:=2(m-p)+1,
\end{equation}
with an implicit constant independent of $\varkappa$ and $\delta$.

To that end, by substituting $w_p = 1 - \sigma_p$ into~\eqref{eq:important_integral_gen}, we can rewrite~\eqref{eq:system_gen} as the following $m\times n$ system  of equations:
\begin{equation}\label{eq:good_system}
  \sum_{\ell=1}^{n} a_\ell(\varkappa) A_{j+\ell}(\varkappa)  = b_{j}(\varkappa),\qquad j=0,\ldots,m-1    
\end{equation}
for the unknown coefficients $\big\{a_\ell\big\}_{\ell=1}^{n}$ in~\eqref{eq:reg_funcs}, where the matrix and vector coefficients are given by
\begin{subequations}\begin{align}
 A_{j+\ell}(\varkappa) :=&~\frac{2}{\sqrt{\pi}} \int_0^\infty \Phi(\varkappa t)\e^{-t^2}t^{2(j+\ell)-1}\de t\quad\text{and}    \label{eq:coeff_gen}\\
b_{j}(\varkappa) :=&~{\rm p.f.}\!\int_0^\infty \Phi(\varkappa t)\left\{\operatorname{erfc}(t)+\frac{2}{\sqrt{\pi}}\e^{-t^2}\left((\delta_{1,p}+\delta_{2,p})t+\frac{2}{3}\delta_{2,p}t^3\right)\right\}t^{2(j-p)}\de t.\label{eq:coeff_gen_formula}
\end{align}\label{eq:generic_coeff}\end{subequations}
Here, $\erfc := 1-\erf$ is the complementary error function and $\delta_{p,q}$ is the Kronecker delta.

\begin{remark}\label{rem:ill-cond_mat}The explicit expressions for the coefficients~\eqref{eq:generic_coeff} and the selection of the parameters $n$ and $m$ (for a given regularization error order $\mathfrak m$) that lead to a solvable linear system~\eqref{eq:good_system} depend on the specific boundary integral operator being regularized. The natural choice of selecting $n$ and $m$ such that the resulting linear system is square leads to a symmetric Hankel system matrix $\mathrm{A}(\varkappa)$. Unfortunately, $\mathrm{A}(\varkappa)$ becomes singular for a finite number of values of $\varkappa$. Although this occurs only for a limited set of parameter values, it leads to severe ill-conditioning in their vicinity. As a result, the computed coefficients $\big\{a_\ell(\varkappa)\big\}_{\ell=1}^{n}$ may become inaccurate, potentially degrading the overall accuracy of the kernel regularization procedure. A simple remedy is to choose $n$ and $m$  so that the matrix rows are linearly independent. In practice, this is accomplished by enlarging the system to a rectangular form---with more columns than rows---and solving it via the pseudoinverse. This is approach taken in all the numerical examples presented below in Section~\ref{sec:numerics}.
\end{remark}

 In the sequel, we detail the construction of regularizations for the four boundary integral operators.

\subsection{Single-layer operator}\label{sec:single_layer}
We begin with the single-layer operator, defined by
\begin{equation}\label{eq:single_layer}
\mathsf S[\varphi](x) := \frac{1}{4\pi}\int_{\Gamma}\frac{\e^{ik|x-y|}}{|x-y|}\varphi (y)\de s(y),\quad x\in\Gamma.
\end{equation}

Since $\imag\!\left\{\e^{ir}/r\right\} = \sin(r)/r$ admits a direct analytic extension to $\R$, only the real part of the kernel requires regularization. By interpreting the single-layer operator~\eqref{eq:single_layer} as a member of the generic class of singular operators defined in~\eqref{eq:BIOP}, and recalling that~\eqref{eq:single_layer} is weakly singular with singularity order $p=0$, the regularized single-layer operator we consider can therefore be written in the form
\begin{equation}\label{eq:reg_single_layer}
\mathsf S_\delta[\varphi](x) := \frac{1}{4\pi}\int_{\Gamma}\frac{\Phi^{(\mathsf S)}(k|x-y|)}{|x-y|}\sigma^{(\mathsf S)}_0\left(\frac{|x-y|}{\delta}\right)\varphi (y)\de s(y)+\frac{i}{4\pi}\int_{\Gamma}\frac{\sin(k|x-y|)}{|x-y|}\varphi(y)\de s(y),\quad x\in\Gamma,
\end{equation}
where 
\begin{equation}\label{eq:funcs_S}\Phi^{(\mathsf S)}(r) := \cos(r),\quad r\in\R.
\end{equation}

To determine the coefficients $\big\{a_\ell^{(\mathsf S)}(\varkappa)\big\}_{\ell=1}^m$ defining a suitable regularizing function $\sigma_0^{(\mathsf S)}$ of the form~\eqref{eq:reg_funcs}, tailored to the single-layer operator, 
it suffices to enforce the conditions~\eqref{eq:system_gen} on the corresponding moment integrals given by:
$$
I^{(\mathsf S)}_{0,j}(\varkappa) := \int_{0}^\infty \cos(\varkappa t)w^{(\mathsf S)}_0(t)t^{2j}\de t,\quad j\in\N_0.
$$
The resulting $m$ conditions lead to the linear system in the form of~\eqref{eq:good_system} for the coefficients  $\big\{a_\ell^{(\mathsf S)}(\varkappa)\big\}_{\ell=1}^n$, which in this case is given by:
\begin{equation}\label{eq:system_SL_coeff}
  \sum_{\ell=1}^{n} a^{(\mathsf S)}_\ell(\varkappa) A^{(\mathsf S)}_{j+\ell}(\varkappa)  = b^{(\mathsf S)}_{j}(\varkappa),\qquad j\in\{0,\ldots,m-1\},   
\end{equation}
 where 
\begin{subequations}\label{eq:coeff_SL}\begin{align}
 A^{(\mathsf S)}_{j+\ell}(\varkappa) :=&~\frac{2}{\sqrt{\pi}} \int_0^\infty \cos(\varkappa t)\e^{-t^2}t^{2(j+\ell)-1}\de t\quad\text{and}\quad    \label{eq:mat_coeff_S}\\
b^{(\mathsf S)}_{j}(\varkappa) :=&~\int_0^\infty \cos(\varkappa t)\erfc(t)t^{2j}\de t ,\label{eq:coeff_gen_formula_S}
\end{align}\end{subequations}
for $j\in\{0,1,\ldots,m-1\}$ and $\ell\in\{1,\ldots,n\}$.  

This construction of $\sigma_0^{(\mathsf S)}$ guarantees that the regularization error for the single-layer operator, $E^{(\mathsf S)}_r(\delta, k)$, satisfies
\begin{equation}\label{eq:reg_error_S}
\big|E_r^{(\mathsf S)}(\delta, k)\big| \lesssim \big|I^{(\mathsf S)}_{0,m}(\varkappa)\big| \delta^{2m+1}.
\end{equation}
Clearly, for a fixed $\varkappa\geq0$,  the resulting regularization error in this case scales as
$\mathcal{O}(\delta^{\mathfrak m})$  as $\delta \to 0$ with $\mathfrak m=2m+1$.

Although the integrals in~\eqref{eq:coeff_SL} can be evaluated in closed form using symbolic software,  the coefficients are generated more efficiently using recurrence relations.  Specifically, one has
\begin{equation}\label{eq:coeff_SL_rec}
A_j^{(\mathsf S)}(\varkappa) = C_{j-1}(\varkappa),\quad j\in\N,
\qquad\text{and}\qquad
b_j^{(\mathsf S)}(\varkappa) = \widetilde{C}_{j}(\varkappa),\quad j\in\N_0,
\end{equation}
where the sets $\{C_j(\varkappa)\}_{j\in\N_0}$ and $\{\widetilde{C}_j(\varkappa)\}_{j\in\N_0}$ are introduced in  Propositions~\ref{prop:prop_1} and~\ref{prop:prop_2} in Appendix~\ref{sec:coeff},  which also establish the corresponding recurrence relations for their evaluation.

\begin{remark}\label{rem:laplace_SL} In view of the identities established in Propositions~\ref{prop:prop_1} and~\ref{prop:prop_2}, we readily obtain that, at $\varkappa = 0$, it holds that
$$
A_{j}^{(\mathsf S)}(0)= \frac{(j-1)!}{\sqrt{\pi}},\quad j \in\N,\qquad\text{and}\qquad b_{j}^{(\mathsf S)}(0)= \frac{1}{\sqrt{\pi}}\frac{j!}{2j+1},\quad j\in\N_0.
$$
These formulae enable to explicitly write down the linear system for the computation of the regularization function $\sigma_{0}^{(\mathsf S)}$ to arbitrarily high order. It can be shown, computing the determinant of the system matrix $\mathrm A^{(\mathsf S)}(0)\in\R^{n\times n}$, $[\mathrm A^{(\mathsf S)}(0)]_{i,j}= A_{i+j-1}(0)$, that the linear system for the coefficients $\big\{a^{(\mathsf S)}_\ell(0)\big\}_{\ell=1}^{n}$ is non-singular for all $m = n$. In fact, 
\begin{equation}
\operatorname{det}\mathrm A^{(\mathsf S)}(0) = 
\frac{1}{\pi^{n / 2}} \prod_{j=0}^{n-1}(j!)^2\neq 0.
\end{equation}
Although the matrix remains invertible, the rapid growth of its coefficients makes it severely ill-conditioned as its dimension increases, thereby limiting the achievable order of the method. However, for moderate sizes (e.g., $n = m$ with $n \leq 7$), the system can be solved symbolically, effectively avoiding ill-conditioning issues altogether. 

For example, to achieve a regularization error that scales as $\mathcal{O}(\delta^7)$ as $\delta \to 0$, we can choose $m = n = 3$.  
This yields a $3 \times 3$ linear system~\eqref{eq:system_SL_coeff}, whose unique  solution at $\varkappa = 0$ is
$$
a^{(\mathsf S)}_1(0)=\frac{11}{5},\qquad a^{(\mathsf S)}_2(0)=-\frac{26}{15},\qquad a^{(\mathsf S)}_3(0)=\frac{4}{15}.
$$
The corresponding regularization function then takes the form
$$
\sigma^{(\mathsf S)}_{0}(t) =\erf(t)+\frac{2}{\sqrt{\pi}}\e^{-t^2}\left\{\frac{11}{5}t-\frac{26}{15}t^3+\frac{4}{15}t^5\right\},
$$
which agrees with the one presented in~\cite{beale2025high} for the Laplace single-layer operator.

\end{remark}

\subsection{Double-layer and adjoint double-layer operators}\label{sec:double_layer}
Next we consider the double-layer and adjoint double-layer operators, which are defined by
$$
\mathsf K[\varphi](x) := \frac{1}{4\pi}\int_{\Gamma}\frac{(1-ik|x-y|)\e^{ik|x-y|}}{ |x-y|^3}\nu(y)\cdot(x-y)\varphi (y)\de s(y)
$$
and
$$
\mathsf K^\top[\varphi](x) :=\frac{1}{4\pi} \int_{\Gamma}\frac{(1-ik|x-y|)\e^{ik|x-y|}}{ |x-y|^3}\nu(x)\cdot(y-x)\varphi (y)\de s(y),
$$
for $x\in\Gamma$, respectively, where $\nu$ denotes the outward unit normal to the surface. 

By decomposing the kernels into singular and smooth components, 
and identifying the singular contributions with the generic operator in~\eqref{eq:BIOP}, 
we obtain the regularized double-layer and adjoint double-layer operators in the form
\begin{equation}\label{eq:reg_K}\begin{split}
\mathsf K_\delta[\varphi](x) :=&\frac{1}{4\pi}\int_{\Gamma}\frac{\Phi^{(\mathsf K)}(k|x-y|)}{|x-y|^3}\sigma^{(\mathsf K)}_1\left(\frac{|x-y|}{\delta}\right)\Psi^{(\mathsf K)}(x,y)\varphi (y)\de s(y)+ \\
&\frac{i}{4\pi}\int_{\Gamma}\frac{\sin(k|x-y|)-k|x-y| \cos (k|x-y|)}{|x-y|^3}\Psi^{(\mathsf K)}(x,y)\varphi (y)\de s(y)
\end{split}\end{equation}
and
\begin{equation}\label{eq:reg_Kp}\begin{split}
\mathsf K^\top_\delta[\varphi](x) :=& \frac{1}{4\pi} \int_{\Gamma}\frac{\Phi^{({\mathsf K})}(k|x-y|)}{|x-y|^3}\sigma^{(\mathsf K)}_1\left(\frac{|x-y|}{\delta}\right)\Psi^{(\mathsf K)}(y,x)\varphi (y)\de s(y)+ \\
&\frac{i}{4\pi}\int_{\Gamma}\frac{\sin(k|x-y|)-k|x-y| \cos (k |x-y|)}{ |x-y|^3}\Psi^{(\mathsf K)}(y,x)\varphi (y)\de s(y),
\end{split}\end{equation}
for $x\in\Gamma$, where
\begin{equation}\label{eq:funcs_double_layer}
\Phi^{(\mathsf K)}(r) := \cos (r)+r \sin (r)\quad (r\in\R)\quad\text{and}\quad \Psi^{(\mathsf K)}(x,y) :=\nu(y)\cdot (x-y)\quad  (x,y\in\Gamma). 
\end{equation}

The moment integrals~\eqref{eq:important_integral_gen} associated with both the double-layer and the adjoint double-layer kernels coincide, and are given by
$$
I^{(\mathsf K)}_{1,j}(\varkappa) := {\rm p.f.}\!\int_{0}^\infty \{\cos(\varkappa t)+\varkappa t\sin(\varkappa t)\}w^{(\mathsf K)}_1(t)t^{2(j-1)}\de t,\quad j\in\N_0.$$
This shows that the required regularizing function $\sigma_1^{(\mathsf K)}$ can be chosen identically for both operators.

Since $\Psi^{(\mathsf K)}(x,y) = \mathcal{O}(|x-y|^2)$ as $|x-y| \to 0$, the expansion~\eqref{eq:expan_error} of the regularization errors $E^{(\mathsf K)}_r(\delta,k)$ and $E_r^{(\mathsf K')}(\delta,k)$, for the double-layer and adjoint double-layer operators respectively, starts at $j=1$ rather than $j=0$.

Therefore, we obtain the following  linear system for the coefficients 
$\big\{a_\ell^{(\mathsf K)}(\varkappa)\big\}_{\ell=1}^n$:
\begin{equation}\label{eq:system_K}
\sum_{\ell=1}^{n} a_\ell^{(\mathsf K)}(\varkappa)A^{(\mathsf K)}_{j+\ell}(\varkappa) = b_j^{(\mathsf K)}(\varkappa),\quad j\in\{1,2,\ldots,m-1\},
\end{equation}
where the system matrix entries are given by
\begin{subequations}\begin{eqnarray}\label{eq:coeff_gen_formula_K}
A^{(\mathsf K)}_{j+\ell}(\varkappa) &:=&\frac{2}{\sqrt{\pi}} \int_0^\infty\left\{\cos(\varkappa t)+\varkappa t \sin (\varkappa t)\right\} \e^{-t^2}t^{2(j+\ell)-1}\de t\quad\text{and}\quad    \\
b^{(\mathsf K)}_{j}(\varkappa) &:=& \int_0^\infty \left\{\cos(\varkappa t)+ \varkappa t \sin (\varkappa t)\right\}\left\{\erfc(t)+\frac{2}{\sqrt{\pi }} t\e^{-t^2}\right\} t^{2(j-1)}\de t,
\end{eqnarray}\end{subequations}
for $j\in\{1,2,\ldots,m-1\}$ and $\ell\in\{1,2,\ldots,n\}$.  

This construction of the regularizing function leads to the following bounds on the regularization errors:
\begin{equation}\label{eq:reg_error_K}
\big|E_r^{(\mathsf K)}(\delta,k)\big|\lesssim \big|I^{(\mathsf K)}_{1,m}(\varkappa)\big|\delta^{2m-1}\quad\text{and}\quad \big|E_r^{(\mathsf K')}(\delta,k)\big|\lesssim \big|I^{(\mathsf K)}_{1,m}(\varkappa)\big|\delta^{2m-1}.
\end{equation}

The following recursions for the coefficients involved in the linear system~\eqref{eq:system_K} can be obtained by applying Propositions~\ref{prop:prop_1} and~\ref{prop:prop_2} in Appendix~\ref{sec:coeff}. Indeed, since 
\begin{equation}\label{eq:coeff_DL_rec}
A^{(\mathsf K)}_j(\varkappa) = C_{j-1}(\varkappa)+\varkappa S_{j}(\varkappa),\ j\in\N_{\geq 2}, \quad\text{and}\quad b_j^{(\mathsf K)} = 2j\left\{\widetilde C_{j-1}(\varkappa)+\varkappa\widetilde S_{j-1}(\varkappa)\right\}+\varkappa^2\widetilde C_{j}(\varkappa),\  j\in \N,
\end{equation}
where $\{C_j(\varkappa)\}_{j\in\N_0}$, $\{S_j(\varkappa)\}_{j\in\N_0}$, $\{\widetilde{C}_j(\varkappa)\}_{j\in\N_0}$, and $\{\widetilde{S}_j(\varkappa)\}_{j\in\N_0}$ are introduced in  Propositions~\ref{prop:prop_1} and~\ref{prop:prop_2} in Appendix~\ref{sec:coeff},  which also establish the corresponding recurrence relations for their evaluation.

\begin{remark}  In view of~\eqref{eq:exact_coeff_0} and~\eqref{eq:zero_erf}, at $\varkappa = 0$, the coefficients in~\eqref{eq:coeff_DL_rec} become
$$
A_{j}^{(\mathsf K)}(0)= C_{j-1}(0)= \frac{(j-1)!}{\sqrt{\pi}},\quad j\in\N_{\geq 2},\qquad\text{and}\qquad b_{j}^{(\mathsf K)}(0)=2j\widetilde C_{j-1}(0)=\frac{2}{\sqrt{\pi}}\frac{j!}{2j-1},\quad j \in\N.
$$
To achieve a regularization error that scales as $\mathcal{O}(\delta^7)$ as $\delta \to 0$, for example, we must choose $m = n+1 = 4$, which leads to a $3 \times 3$ linear system~\eqref{eq:system_K} for the sought coefficients. In this case, we obtain$$
a^{(\mathsf K)}_1(0)=\frac{118}{15},\qquad a^{(\mathsf K)}_2(0)=-\frac{68}{15},\qquad a^{(\mathsf K)}_3(0)=\frac{8}{15},
$$
which yield the regularization function
$$
\sigma^{(\mathsf K)}_1(t) =\erf(t)+\frac{2}{\sqrt{\pi}}\e^{-t^2}\left\{-t+\frac{118}{15}t^3-\frac{68}{15}t^5+\frac{8}{15}t^7\right\}
$$
that agrees with the one derived in~\cite{beale2025high} for the Laplace double-layer operator. 

As in the case of the single-layer operator, the system matrix $\mathrm A^{\mathsf K}(0)\in\R^{n\times n}$ for the coefficients is invertible since it can be shown that
\begin{equation}
\operatorname{det}\mathrm A^{(\mathsf K)}(0) = 
\frac{n!}{\pi^{n / 2}} \prod_{j=0}^{n-1}(j!)^2\neq 0.
\end{equation}  
\end{remark}

\subsection{Hypersingular operator}\label{sec:hyper}
We begin by expressing the hypersingular operator as the sum
$$
\mathsf T := \mathsf H +\mathsf W,
$$
which involves a hypersingular part and a weakly singular part, respectively defined as follows:
\begin{subequations}\begin{align}
\mathsf H[\varphi](x) :=&~ \frac{1}{4\pi}{\rm p.f.}\int_{\Gamma} \frac{\left(1-ik|x-y|\right)\e^{ik|x-y|}}{|x-y|^3}\nu(y)\cdot\nu(x)\varphi(y)\de s(y)\quad\text{and}\\
\mathsf W[\varphi](x) :=&~ \frac{1}{4\pi}\int_{\Gamma}\frac{(k^2|x-y|^2+3ik|x-y|-3)\e^{ik|x-y|}}{|x-y|^5}(x-y)\cdot\nu(y)(x-y)\cdot\nu(x)\varphi(y)\de s(y),
\end{align}\end{subequations}
for $x\in\Gamma$. 

Given the distinct singular nature of these two operators, we have to construct different regularization functions, $\sigma_2^{(\mathsf H)}$ and $\sigma_2^{(\mathsf W)}$, of the form~\eqref{eq:reg_funcs},  for each operator term, $\mathsf H$ and $\mathsf W$, respectively. In detail, the regularized operator components that we consider are given by 
\begin{subequations}\begin{align}\begin{split}
\mathsf H_\delta[\varphi](x) :=&\frac{1}{4\pi }\int_{\Gamma}\frac{\Phi^{(\mathsf H)}(k|x-y|)}{|x-y|^5}\sigma^{(\mathsf H)}_2\left(\frac{|x-y|}{\delta}\right)\Psi^{(\mathsf H)}(x,y)\varphi(y)\de s(y) +\\
&\frac{i}{4\pi}\int_{\Gamma}\frac{\sin(k|x-y|)-k|x-y|\cos(k|x-y|)}{|x-y|^3}\nu(y)\cdot\nu(x)\varphi(y)\de s(y)\label{eq:reg_H}
\end{split}\\
\begin{split}\mathsf W_\delta[\varphi](x):=&\frac{1}{4\pi}\int_{\Gamma}\frac{\Phi^{(\mathsf W)}(k|x-y|)}{|x-y|^5}\sigma^{(\mathsf W)}_2\left(\frac{|x-y|}{\delta}\right)\varphi(y)\de s(y)+\\
&\frac{i}{4\pi}\int_{\Gamma}\frac{3k|x-y|\cos(k|x-y|)+(k^2|x-y|^2-3)\sin(k|x-y|)}{ |x-y|^5}\Psi^{(\mathsf W)}(x,y)\varphi(y)\de s(y)
\end{split}\label{eq:reg_W}
\end{align}\end{subequations}
for $x\in\Gamma$, where
\begin{subequations}\begin{align}
\Phi^{(\mathsf H)}(r) :=&~\cos(r)+r\sin(r),&\qquad \Psi^{(\mathsf H)}(x,y):=&~[(x-y)\cdot(x-y)]\, [\nu(y)\cdot\nu(x)],\label{eq:smooth_H}\\
\Phi^{(\mathsf W)}(r) :=&~(r^2-3)\cos(r)-3r\sin(r),&\qquad \Psi^{(\mathsf W)}(x,y):=&~[(x-y)\cdot\nu(y)]\, [(x-y)\cdot\nu(x)],\label{eq:smooth_W}
\end{align}\label{eq:funcs_H}\end{subequations}
for $r\in\R$ and $x,y\in\Gamma$.
\begin{remark}\label{rem:about_H}
Note that the kernel associated with the operator $\mathsf H$ can be expressed with a weaker singularity, due to the factor $|x-y|^2 = (x-y)\cdot(x-y)$ appearing in the smooth function $\Psi^{(\mathsf H)}$. In this case, the limiting value of the regularized integrand as $|x-y| \to 0$ is proportional to $\delta^{-3}$ (see~\eqref{eq:limits}). In order to mitigate such potentially large values arising for small $\delta$, the kernel is written in the present form so as to ensure that the regularized integrand vanishes at $x = y$.
\end{remark}
In view of~\eqref{eq:important_integral_gen}, we have that the corresponding moment integrals are  given by 
\begin{subequations}\label{eq:mom_int_hyper}\begin{align}
I_{2,j}^{(\mathsf H)}(\varkappa):=&~{\rm p.f.}\!\int_{0}^\infty \{\cos(\varkappa t)+\varkappa t\sin(\varkappa t)\}w^{(\mathsf H)}_2(t)t^{2(j-2)}\de t,\\
I_{2,j}^{(\mathsf W)}(\varkappa):=&~{\rm p.f.}\!\int_{0}^\infty \{((\varkappa t)^2-3)\cos(\varkappa t)-3\varkappa t\sin(\varkappa t)\}w^{(\mathsf W)}_2(t)t^{2(j-2)}\de t,\quad j\in\N_{0}.
\end{align}\end{subequations}

As for the other boundary integral operators, we have decomposed each kernel into a singular component and a smooth remainder. The singular parts were then identified within the generic singular operator framework introduced in Section~\ref{sec:general}. Note that, since $\Psi^{(\mathsf W)}(x,y)=\mathcal{O}(|x-y|^4)$ as $|x-y|\to 0$,  $\mathsf W$ is indeed weakly singular.  Consequently, the corresponding expansion~\eqref{eq:expan_error} of the regularization error $E_r^{(\mathsf W)}(\delta,k)$ for $\mathsf W$ starts at $j=2$. On the other hand, since $\Psi^{(\mathsf H)}(x,y) = |x-y|^2$ as $|x-y|\to 0 $, the corresponding expansion of $E_r^{(\mathsf H)}(\delta,k)$  starts at $j=1$.

Imposing the conditions in~\eqref{eq:system_gen} on the moment integrals~\eqref{eq:mom_int_hyper} we arrive at the following linear systems:\begin{subequations}\begin{align}
\sum_{\ell=1}^{n} a_\ell^{(\mathsf H)}(\varkappa)A^{(\mathsf H)}_{j+\ell}(\varkappa) =&~ b_j^{(\mathsf H)}(\varkappa),\quad j\in\{1,2,\ldots,m_H-1\}\quad\text{and}\label{eq:lin_sym_H}\\
\sum_{\ell=1}^{n} a_\ell^{(\mathsf W)}(\varkappa)A^{(\mathsf W)}_{j+\ell}(\varkappa) =&~ b_j^{(\mathsf W)}(\varkappa),\quad j\in\{2,3,\ldots,m_W-1\},\label{eq:lin_sym_W}
\end{align}\label{eq:system_for_hyper}\end{subequations}
 for the coefficients $\big\{a_\ell^{(\mathsf H)}(\varkappa)\big\}_{\ell=1}^n$ and $\big\{a_\ell^{(\mathsf W)}(\varkappa)\big\}_{\ell=1}^n$ that define the regularizing functions $\sigma_2^{(\mathsf H)}$ and $\sigma_2^{(\mathsf W)}$.
Here, the linear-system coefficients are given by 
\begin{subequations}\begin{align}\label{eq:coeff_gen_formula_H}
A^{(\mathsf H)}_{j+\ell}(\varkappa) :=&~\frac{2}{\sqrt{\pi}} \int_0^\infty\left\{\cos(\varkappa t)+\varkappa t \sin (\varkappa t)\right\} \e^{-t^2}t^{2(j+\ell)-1}\de t\quad\text{and}\quad \\    
b^{(\mathsf H)}_{j}(\varkappa) :=&~{\rm p.f.}\! \int_0^\infty\left\{\cos(\varkappa t)+ \varkappa t \sin (\varkappa t)\right\}\left\{\erfc(t)+\frac{2}{\sqrt{\pi }} \e^{-t^2}\left(t+\frac{2}{3}t^3\right)\right\} t^{2(j-2)}\de t,
\end{align}\end{subequations}
for $j\in\{1,\ldots,m_H-1\}$ and $\ell\in\{1,\ldots,n\}$,
and 
\begin{subequations}\begin{align}\label{eq:coeff_gen_formula_W}
A^{(\mathsf W)}_{j+\ell}(\varkappa) :=&~\frac{2}{\sqrt{\pi}} \int_0^\infty\left\{((\varkappa t)^2-3)\cos(\varkappa t)-3\varkappa t\sin(\varkappa t)\right\} \e^{-t^2}t^{2(j+\ell)-1}\de t\quad\text{and}\quad \\    
b^{(\mathsf W)}_{j}(\varkappa) :=&~\int_0^\infty\left\{((\varkappa t)^2-3)\cos(\varkappa t)-3\varkappa t\sin(\varkappa t)\right\}\left\{\erfc(t)+\frac{2}{\sqrt{\pi }} \e^{-t^2}\left(t+\frac{2}{3}t^3\right)\right\} t^{2(j-2)}\de t,
\end{align}\end{subequations}
for $j\in\{2,\ldots,m_W-1\}$ and $\ell\in\{1,\ldots,n\}$.
The resulting regularization errors satisfy 
\begin{equation}\label{eq:reg_error_T}
\big|E_r^{(\mathsf H)}(\delta,k)\big| \lesssim \big|I^{(\mathsf H)}_{2,m_H}(\varkappa)\big| \delta^{2m_H-3}\quad\text{and}\quad\big|E_r^{(\mathsf W)}(\delta,k)\big| \lesssim \big|I^{(\mathsf W)}_{2,m_W}(\varkappa)\big| \delta^{2m_W-3},
\end{equation}
respectively.

The integrals in~\eqref{eq:coeff_gen_formula_H} and \eqref{eq:coeff_gen_formula_W} can further  be expressed in terms of the set of functions introduced in the Appendix~\ref{sec:coeff}. Indeed, applying the definitions in Propositions~\ref{prop:prop_1},~\ref{prop:prop_2}, and~\ref{prop:prop_3}, we readily obtain 
\begin{subequations}\begin{align}
A^{(\mathsf H)}_{j}(\varkappa) =&~C_{j-1}(\varkappa) + \varkappa S_{j}(\varkappa),\quad j\in\N_{\geq 2},\\
b^{(\mathsf H)}_{j}(\varkappa) =&~\widetilde C_{j-2}(\varkappa)+C_{j-2}(\varkappa)+\frac{2}{3}C_{j-1}(\varkappa)+\varkappa \left\{\widetilde S_{j-2}(\varkappa)+ S_{j-1}(\varkappa)+\frac{2}{3}S_{j}(\varkappa)\right\},\quad j\in\N.\label{eq:sing_rhs}
\end{align}
and
\begin{align}
A^{(\mathsf W)}_{j}(\varkappa) =&~\varkappa^2 C_{j}(\varkappa)-3[C_{j-1}(\varkappa) + \varkappa S_{j}(\varkappa)],\quad j\in\N_{\geq 3},\\
b^{(\mathsf W)}_{j}(\varkappa) =&~\varkappa^2\widetilde C_{j-1}(\varkappa)-3[\widetilde C_{j-2} (\varkappa)+ \varkappa \widetilde S_{j-2}(\varkappa)] + \varkappa^2 C_{j-1}(\varkappa)-3[C_{j-2}(\varkappa)+\varkappa S_{j-1}(\varkappa)]+\nonumber\\
&\frac{2}{3}\varkappa^2 C_{j}(\varkappa)-2[C_{j-1}(\varkappa)+\varkappa S_{j}(\varkappa)],\quad j\in\N_{\geq 2}.
\end{align}\label{eq:coeff_hyper}\end{subequations}
In the case of the operator $\mathsf{H}$, the negative-index coefficients $\widetilde{C}_{-1}(\varkappa)$, $C_{-1}(\varkappa)$, and $S_{-1}(\varkappa)$---required in~\eqref{eq:sing_rhs}---are provided explicitly in Proposition~\ref{prop:prop_3} in terms of improper integrals.

\begin{remark} At $\varkappa=0$ the relations~\eqref{eq:coeff_hyper} simplify to
\begin{align*}
A^{(\mathsf H)}_{j}(0) =&~C_{j-1}(0),\quad j\in\N_{\geq 2}&b^{(\mathsf  H)}_{j}(0) =&~\widetilde C_{j-2}(0)+C_{j-2}(0)+\frac{2}{3}C_{j-1}(0),\quad j\in \N,\\
A^{(\mathsf W)}_{j}(0) =&-3C_{j-1}(0),\quad j\in\N_{\geq 3}&b^{(\mathsf W)}_{j}(0) =&-3[\widetilde C_{j-2}(0) +C_{j-2}(0)]-2C_{j-1}(0),\quad j\in\N_{\geq 2}.
\end{align*} 
Applying formulae~\eqref{eq:exact_coeff_0},~\eqref{eq:zero_erf}, and~\eqref{eq:sing_case}, we arrive at 
\begin{align*}
A^{(\mathsf H)}_{j}(0) =&\frac{(j-1)!}{\sqrt{\pi}},\quad j\in\N_{\geq 2}, & b_j^{(\mathsf H)}(0) =& \frac{4}{3\sqrt{\pi}}\frac{j!}{2j-3},\quad j\in\N,\\
A^{(\mathsf W)}_{j}(0) =&-\frac{3}{\sqrt\pi}(j-1)!,\quad j\in\N_{\geq 3},&b^{(\mathsf W)}_j(0)=&-\frac{4}{\sqrt{\pi}}\frac{j!}{2 j-3},\quad j\in\N_{\geq 2}.
\end{align*} 

Then, in the case $m_H = m_W = 5$, for which the regularization error scales as $\mathcal{O}(\delta^{7})$, we solve the corresponding linear systems~\eqref{eq:system_for_hyper} and obtain the following set of coefficients for the regularization functions:
$$
a^{(\mathsf H)}_1(0)=  -\frac{172}{5}, \qquad
a^{(\mathsf H)}_2(0)=  \frac{584}{15}, \qquad
a^{(\mathsf H)}_3(0)=  -\frac{464}{45}, \qquad
a^{(\mathsf H)}_4(0)=  \frac{32}{45}, 
$$
and
$$
a^{(\mathsf W)}_1(0)= \frac{124}{15},\qquad a^{(\mathsf W)}_2(0)= -\frac{56}{15},\qquad a^{(\mathsf W)}_3(0)= \frac{16}{45},
$$
which yield
$$
\sigma^{(\mathsf H)}_2(t) =  \erf(t)+\frac{2}{\sqrt{\pi}}\e^{-t^2}\left\{-t-\frac23t^3-\frac{172}{5}t^5+\frac{584}{15}t^7-\frac{464}{45}t^9+\frac{32}{45}t^{11}\right\}
$$
and
$$
\sigma^{(\mathsf W)}_2(t) =\erf(t)+\frac{2}{\sqrt{\pi}}\e^{-t^2}\left\{-t-\frac23t^3+\frac{124}{15}t^5-\frac{56}{15}t^7+\frac{16}{45}t^9\right\}.
$$
Unlike the previous cases, the present results constitute a significant extension of the work in~\cite{beale2025high}, where the Laplace hypersingular operator is not considered.

As in the case of the Laplace single- and double-layer operators, the matrices $\mathrm A^{(\mathsf H)}(0)\in\R^{(n+1)\times(n+1)}$ and $\mathrm A^{(\mathsf W)}(0)\in \R^{n\times n}$ for the coefficients of the operators making up $\mathsf T$ satisfy
$$
\operatorname{det} \mathrm{A}^{(\mathsf{H})}(0)=\frac{(n+1)!}{\pi^{(n+1) / 2}} \prod_{j=0}^{n}(j!)^2 \neq 0 
\quad\text{and}\quad
\operatorname{det} \mathrm{A}^{(\mathsf{W})}(0)=\frac{(-3)^n}{\pi^{n / 2}} \prod_{j=0}^{n-1} j!(j+2)!\neq 0.
$$
\end{remark}

\section{Quadrature and discretization error}\label{sec:discretization_error}

In this section, we present a numerical integration procedure for the evaluation of the regularized boundary integral operators considered in the previous sections. Since the surface integrands are smooth in all cases, we develop a composite quadrature rule for smooth integrals over the surface $\Gamma$. As the derivatives of the resulting integrands are controlled by the regularization parameter, as will be shown in this section, the quadrature discretization error depends on this parameter as well. We derive bounds for the  error and obtain expressions for the regularization parameter $\delta$ in terms of the discretization parameter $h$, to ensure convergence of the overall procedure and to balance the discretization and regularization errors, thereby guaranteeing high-order convergence.

Following~\cite[Sec. 4.1.2]{Sauter2010}, we consider a family of surface meshes $\left\{ \mathcal{T}_h \right\}_{h>0}$, where each $\mathcal{T}_h$ provides a covering of $\Gamma$; i.e. 
$$
\Gamma=\bigcup_{T \in \mathcal{T}_h} T.
$$
Furthermore, each element $T \in \mathcal{T}_h$ is the image of a reference element $\widehat T$ under a smooth parametrization $F_T: \widehat{T} \rightarrow T \subset \Gamma$. We define the mesh size
$$
h:=\max _{T \in \mathcal{T}_h} \operatorname{diam}(T),
$$
where $\operatorname{diam}(T)$ denotes the diameter of the element $T$.
We assume that the family of tessellation $\left\{\mathcal{T}_h\right\}_{h>0}$ is shape-regular and quasi-uniform in the sense~\cite[Sec. 4.1.2]{Sauter2010}. 

A quadrature rule on $T$ is defined by the pushforward of a fixed quadrature rule on $\widehat{T}$.
Specifically,
$$
Q_T[\psi]:=\sum_{j=1}^{N_Q} w_j \psi\big(F_T\left(\widehat{x}_j\right)\big) J_T\left(\widehat{x}_j\right),
$$
where $\left\{\hat{x}_j, w_j\right\}_{j=1}^{N_Q}$ are the nodes and weights of a quadrature rule on $\widehat{T}$, and $J_T=\left|\p_{\hat{x}_1} F_T \times \p_{\hat{x}_2} F_T\right|$ is the surface Jacobian. A technical assumption given by~\cite[Assumption 5.3.5]{Sauter2010}) is also used; in the case of an affine mapping, this means that there exist constants $c, C>0$, independent of $h$ and $T$, such that
$c h^2 \leq J_T(\widehat{x}) \leq C h^2$ for all $\widehat{x} \in \widehat{T}.$

In what follows, we consider a quadrature rule with positive weights and degree of exactness $\mathfrak{q}$, that is,
$$
\sum_{j=1}^{N_Q} w_j P\left(\hat{x}_j\right)= \int_{\widehat{T}} P(\hat{x}) \de \hat{x}\quad\text{for all polynomials $P$ of total degree $\leq \mathfrak{q}.$}
$$
For such quadrature rule, the following local error estimate holds for integration over a single (curved) element $T$~\cite[Corollary 5.3.12]{Sauter2010}:
\begin{equation}\label{eq:loc_error_estimate}
\left|\int_{T} \psi\de s -Q_{ T}[\psi]\right|  \lesssim  h_T^{r+2}\|\psi\|_{C^{r}(T)}\lesssim  h_T^{r}\|\psi\|_{C^{r}(T)}\int_{T}\de s,\quad r\in\{0,\ldots,\mathfrak{q}+1\},
\end{equation}
where $\psi$ is assumed to be sufficiently smooth and $h_T=\operatorname{diam}(T)$, with the implicit constant depending on the quadrature rule and the element chart $F_T$.

Let $\Sigma \subset \Gamma$ be a tessellated surface patch, i.e., a union of surface elements $T \in \mathcal{T}_h$. Slightly abusing the notation, we define the composite quadrature rule over $\Sigma$ by
\begin{equation}\label{eq:composite_quadrature}
Q_{\Sigma}[\psi] :=\sum_{T\in\mathcal T_h:T\subset\Sigma} Q_{T}[\psi].
\end{equation}
Summing~\eqref{eq:loc_error_estimate} over all elements $T \subset \Sigma$ yields the  estimate
\begin{equation}\label{eq:global_estimate}
\left|\int_{\Sigma} \psi\de s -Q_{\Sigma}[\psi]\right|\lesssim  h^{r}\|\psi\|_{C^{r}(\Sigma)} \int_{\Sigma}\de s,\quad r\in\{0,\ldots,\mathfrak{q}+1\},
\end{equation}
with the implicit constant depending on the quadrature rule and the surface. The independence from $h>0$ follows from the shape-regularity assumption on the tessellation.

Let $x\in\Gamma$ and $k>0$ be fixed and consider 
\begin{equation}\label{eq:reg_integral_analysis}
f_\delta(y) = \frac{\Phi(k|x-y|)}{|x-y|^{2p+1}}\sigma_p\left(\frac{|x-y|}{\delta}\right)\Psi(x,y)\varphi(y),\quad y\in\Gamma,
\end{equation} the surface integrand in~\eqref{eq:reg_operator} arising from the regularization  
of the generic operator $\mathsf B[\varphi](x)$~\eqref{eq:BIOP},  for a smooth density $\varphi$ and a target point 
$x\in\Gamma$. The proposed method approximates $\mathsf B[\varphi](x)$ by the composite quadrature $Q_\Gamma\big[f_\delta\big]$~\eqref{eq:composite_quadrature}. The expressions for the functions $\Phi$, $\Psi$, and $\sigma_p$ associated with each of the boundary integral operators considered can be found in Sections~\ref{sec:single_layer}, \ref{sec:double_layer}, and~\ref{sec:hyper}, corresponding respectively to the single-layer operator, the double- and adjoint double-layer operators, and the two terms composing the hypersingular operator.

The overall error of the proposed approach, after numerical integration, can be expressed as
\begin{equation}\label{eq:error}
E=\underbrace{{\rm f.p.}\int_{\Gamma}\left(f- f_\delta\right)\de s}_{\text {regularization error $E_r(\delta,k)$}}+\underbrace{\int_{\Gamma}f_\delta \de s-Q_\Gamma\big[f_\delta\big]}_{\text{discretization error $E_d(h,k)$}},
\end{equation}
where $f$ denotes the (singular) surface integrand corresponding to $\mathsf B[\varphi](x)$ in~\eqref{eq:BIOP}.

Borrowing the proof idea from~\cite{anderson2024fast}, to analyze the discretization error we decompose the surface integral over $\Gamma$ into local (near-field) and far-field contributions by writing
\begin{equation*}
\int_{\Gamma} f_\delta \de s=\sum_{j=1}^{N_\varrho}\int_{\Gamma_{j\varrho}(x)} f_\delta \de s
\end{equation*}
where, for $j=1$,
\begin{subequations}\begin{equation}\label{eq:1st_ring}
\Gamma_\varrho(x):=\bigcup_{T\in\mathcal T_h:T\cap B_\varrho(x)\neq\emptyset}T
\quad\text{and} \quad B_\varrho(x):=\{y\in\R^3:|x-y|<\varrho\},\quad \varrho\geq\delta\geq h,\end{equation}
and, for $j\geq 2$,
\begin{equation}\label{eq:other_rings}
\Gamma_{j\varrho}(x):=\left(\bigcup_{T\in\mathcal T_h:T\cap B_{j\varrho}(x)\neq\emptyset}T\right)\setminus \bigcup_{i=1}^{j-1}\Gamma_{i\varrho}(x).\end{equation}\end{subequations}
Here, $N_\varrho$ must be large enough to ensure $\Gamma\subset B_{N_\varrho\varrho}(x)$; see Figure~\ref{fig:rings}. Letting $d_\Gamma=\operatorname{diam}\Gamma$, this is satisfied by taking $N_\varrho = \lceil d_\Gamma/\varrho\rceil$. The parameter $\varrho>0$ is chosen appropriately for each operator considered.

\begin{figure}[htbp]
  \centering
  \includegraphics[width=0.7\textwidth]{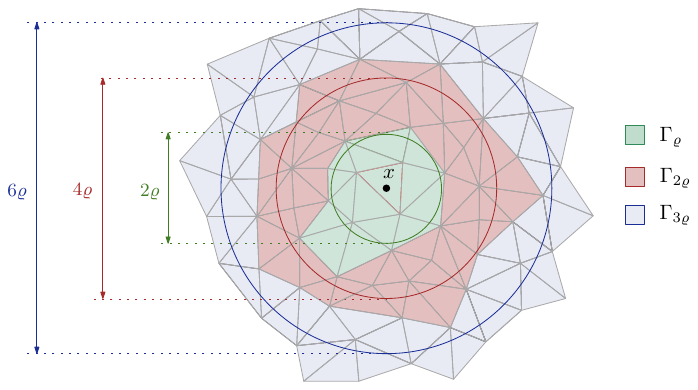} 
  \caption{Illustration of the decomposition of the surface $\Gamma$ into annular regions $\Gamma_{j\varrho}(x)$, $j=1,\dots,N_\varrho$, around a point $x\in\Gamma$. The innermost region $\Gamma_\varrho(x)$ defined in~\eqref{eq:1st_ring} consists of all mesh elements intersecting the ball $B_\varrho(x)$, and each subsequent ring $\Gamma_{j\varrho}(x)$, defined in~\eqref{eq:other_rings}, contains the elements intersecting $B_{j\varrho}(x)$ but not covered by the previous regions.}
  \label{fig:rings}
\end{figure}

Let $r \in \{0,\ldots,\mathfrak{q}+1\}$ be fixed for the time being. The discretization error for each operator can be analyzed within the same framework. We begin by expressing the regularized integrand~\eqref{eq:reg_integral_analysis} as
\begin{subequations}\label{eq:int_form_1}\begin{equation}
f_\delta(y)  := \frac{1}{\delta^{2(p-s)+1}}u_p\left(\frac{|x-y|}{\delta}\right)\Phi(k|x-y|)\widetilde\Psi(x,y)\varphi(y),
\end{equation}
where 
\begin{equation}
u_p(t) := \frac{\sigma_p\left(t\right)}{ t^{2(p-s)+1}}\quad\text{and}\quad \widetilde\Psi(x,y) := \frac{\Psi(x,y)}{|x-y|^{2s}}.
\end{equation}\end{subequations}
The exponent $s\in(0,p]$ is chosen as large as possible while ensuring that $\widetilde\Psi$ remains sufficiently smooth, at least of class $C^{\mathfrak q+1}(\Gamma\times\Gamma)$. In principle, $s$ may be selected depending on $\mathfrak q$, particularly for small values of $\mathfrak q$. For simplicity, however, and for general surfaces (with the exception of the sphere; see Remark~\ref{rem:sphere_convergence}), we set $s=0$ for all operators except $\mathsf{H}$, for which we safely take $s=1$ (see Remark~\ref{rem:about_H}). In the sequel, we assume that the regularization parameter satisfies $\delta \in (0,1)$. 

By construction of the regularizing function $\sigma_p$, the function $u_p$ is a smooth (analytic) even function of $t\in\mathbb{R}$, and so is $\Phi$. From Proposition~\ref{lem:rad_funcs}, it is obtained  that both radial factors in~\eqref{eq:int_form_1} are smooth functions of $y\in\Gamma$, and therefore $f_\delta$ is a smooth function on $\Gamma$.
Indeed, from Proposition~\ref{lem:rad_funcs} and the chain rule, we get
$$
\|\Phi(k |x-\,\cdot\,|)\|_{C^{r}(\Gamma)}\lesssim \max\{1,k^{r}\}\|\Phi(|\,\cdot\,|)\|_{C^{r}(\overline{B_{kd_\Gamma}(0)})}\lesssim\max\{1,k^{r}\}\|\Phi\|_{C^{2r}([0,kd_\Gamma])},
$$
 where the implicit constant depends only on $r$ and $\Gamma$.   Here and in the sequel, the notation $\lesssim$ hides constants that may depend on $r$ and $\Gamma$, but are independent of $h$, $\delta$, and $k$. Note that the evenness of $\Phi$ is used here.

Moreover, from the definition of the specific function $\Phi$ associated with each operator (see~\eqref{eq:funcs_S}, \eqref{eq:funcs_double_layer}, and~\eqref{eq:funcs_H}), it can be easily shown that there exists a constant $C_{r}>0$ such that 
 $$
  \|\Phi\|_{C^{2r}([0,kd_\Gamma])}\leq C_{r} \left(\sum_{\ell=0}^p (kd_\Gamma)^\ell\right).
 $$

Similarly, from Proposition~\ref{lem:rad_funcs} together with the boundedness of $u_p$ and all its derivatives on $\mathbb{R}$, we get
$$
\|u_p\left(\tfrac{|x-\,\cdot\,|}{\delta}\right)\|_{C^{r}(\Gamma)}\lesssim \delta^{-r}\|u_p(|\,\cdot\,|)\|_{C^{r}(\overline{B_{\delta^{-1}d_\Gamma}(0)})}\lesssim \delta^{-r}\|u_p\|_{C^{2r}([0,\delta^{-1}d_\Gamma])}\lesssim \delta^{-r}\|u_p\|_{C^{2r}(\R)}.
$$

Therefore, letting 
\begin{align} 
C_\Phi(r): =\max\{1,k^{r}\} \left(\sum_{\ell=0}^p (kd_\Gamma)^\ell\right)\quad\text{and}\quad C_\Psi(r):=\| \widetilde\Psi\|_{C^{r}(\Gamma\times\Gamma)}\end{align}
we obtain 
\begin{equation}\label{eq:norm_bound_S}
\|f_\delta\|_{C^{r}(\Gamma)}\lesssim C_\Phi(r) C_\Psi(r)\|u_p\|_{C^{2r}(\mathbb{R})}
\|\varphi\|_{C^{r}(\Gamma)}.
\end{equation}

Combining~\eqref{eq:norm_bound_S} with the quadrature error estimate
\eqref{eq:global_estimate} for integration over $\Gamma_\varrho$ yields
\begin{equation} \label{eq:bound_inside_ball}\begin{split}
\left|
\int_{\Gamma_\varrho} f_\delta\de s- Q_{\Gamma_\varrho}[f_\delta] \right|
&\lesssim h^{r}\|f_\delta\|_{C^{r}(\Gamma)} \int_{\Gamma_\varrho}\de s\\
 &\lesssim C_\Phi(r) C_\Psi(r) \|u_p\|_{C^{2r}(\R)} \|\varphi\|_{C^{r}(\Gamma)} \varrho^2 h^{r}\delta^{-r-2(p-s)-1} ,
\end{split}\end{equation}
where we used that the surface area of $\Gamma_\varrho$ can be bounded by 
$\varrho^2$.

To estimate the discretization error associated with numerical integration over each
$\Gamma_{j\varrho}$ for $j\geq 2$, we write the integrand~\eqref{eq:reg_integral_analysis} as
$$
f_\delta(y)=\frac{1}{\delta^{2(p-s)+1}}\sigma_p\!\left(\frac{|x-y|}{\delta}\right) v_p\!\left(\frac{|x-y|}{\delta}\right)\Phi(k|x-y|)\widetilde\Psi(x,y)\varphi(y),
$$
where 
$$
 v_p(t) := \frac{1}{t^{2(p-s)+1}},\quad t\neq 0.
$$

Since the domains $\Gamma_{j\varrho}$ are uniformly separated from the target point $x$, we do not need to rely on Proposition~\ref{lem:rad_funcs} to estimate the norm of $f_\delta$ in this case. Indeed, bounding the $C^{r}$-norm of the radial factor by the corresponding norm of its radial profile and letting
\begin{equation}\label{eq:ring_intervals}
I_j:=\left[\frac{(j-1)\varrho}{\delta},\frac{d_\Gamma}{\delta}\right]\subset[1,\infty),\quad j\in\{2,\ldots,N_\varrho\},
\end{equation}
 we obtain 
\begin{equation}\label{eq:proto_bound}\begin{split}
\|f_\delta\|_{C^{r}(\Gamma_{j\varrho})}&\lesssim C_\Phi(r) C_\Psi(r) \|\sigma_p\,v_p\|_{C^{r}(I_j)}\|\varphi\|_{C^{r}(\Gamma)}\delta^{-r-2(p-s)-1}.
\end{split}\end{equation}

 Note that we have used the fact that for all $y\in\Gamma_{j\varrho}$, $j\in\{2,\ldots,N_\varrho\}$, it holds $|x-y|\geq (j-1)\varrho$, so $|x-y|/\delta\geq (j-1)\varrho/\delta\in I_j$. In particular $|x-y|/\delta\geq 1$. Consequently, all the  derivatives of the mapping $y\mapsto |x-y|$ up to order $r$, which would appear in the bound above due to the chain rule,  are bounded on this set by constants depending only on $r$. The maximum of these constants is also absorbed into the implicit constant hidden in the symbol $\lesssim$ in~\eqref{eq:proto_bound}, which may depend on $r$ (and on the geometry of $\Gamma$), but is independent of $h$, $\delta$, and $k$.
 
We next use the inequality
\begin{equation}\label{eq:proto_bound_2}
\|\sigma_p\,v_p\|_{C^{r}(I_j)}\lesssim \|\sigma_p\|_{C^{r}(I_j)}\|v_p\|_{C^{r}(I_j)},\end{equation} 
with the constant depending only on $r$, and estimate $\|v_p\|_{C^{r}(I_j)}$. To do so, we note that for all $t\in I_j$ and $\ell\in\{0,\ldots,r\}$ it holds that  $|\frac{\de^\ell}{\de t^\ell} t^{-2(p-s)-1}|\leq \frac{(\ell+2(p-s)+1)!}{(2(p-s)+1)!}t^{-2(p-s)-1}$. It hence follows that  
\begin{equation}\label{eq:proto_bound_3}
\|v_p\|_{C^{r}(I_j)}=\max_{\ell\in\{0,\ldots,r\}}\|v_p\|_{C^{\ell}(I_j)}\leq\frac{(r+2(p-s)+1)!}{(2(p-s)+1)!}\left(\frac{(j-1)\varrho}{\delta}\right)^{-2(p-s)-1}.
\end{equation}
Using~\eqref{eq:proto_bound_2} and~\eqref{eq:proto_bound_3} to bound $\|\sigma_pv_p\|_{C^{r}(I_j)}$ in~\eqref{eq:proto_bound}, we arrive at
\begin{equation}\label{eq:bound_out}\begin{split}
\|f_\delta\|_{C^{r}(\Gamma_{j\varrho})}\lesssim C_\Phi(r) C_\Psi(r)\|\sigma_p\|_{C^{r}(\R)} \|\varphi\|_{C^{r}(\Gamma)} 
\left(\frac{(j-1)\varrho}{\delta}\right)^{-2(p-s)-1}\delta^{-r-2(p-s)-1}.
\end{split}\end{equation}

Applying the quadrature error estimate~\eqref{eq:global_estimate} on $\Gamma_{j\varrho}$ for $j\in\{2,\ldots,N_\varrho\}$ using the bound~\eqref{eq:bound_out}, we get
\begin{equation}\label{eq:bound_outside_ball}
\begin{split}
\left|\int_{\Gamma_{j\varrho}} f_\delta\de s -
Q_{\Gamma_{j\varrho}}\big[f_\delta\big]\right| 
&\lesssim C_\Phi(r) C_\Psi(r) \|\sigma_p\|_{C^{r}(\R)} \|\varphi\|_{C^{r}(\Gamma)} \frac{\delta^{2(p-s)+1}}{\varrho^{2(p-s)-1}}\frac{(2j-1)}{(j-1)^{2(p-s)+1}} h^{r}\delta^{-r-2(p-s)-1},
\end{split}
\end{equation}
where we used the fact that the area of $\Gamma_{j\varrho}$ is bounded by $(2j-1)\varrho^2$.

Therefore, combining~\eqref{eq:bound_inside_ball} and~\eqref{eq:bound_outside_ball} via the triangle inequality, it follows that the discretization error can be bounded as
\begin{equation}\label{eq:bound_discretization_error_proto}\begin{split}
\left|\int_{\Gamma} f_\delta\de s -
Q_{\Gamma}[f_\delta]\right|&\leq \sum_{j=1}^{N_\varrho}\left|\int_{\Gamma_{j\varrho}}f_\delta\de s-Q_{\Gamma_{j\varrho}}[f_\delta]\right|\\
 &\lesssim C_\sigma(r) C_\Phi(r) C_\Psi(r) \|\varphi\|_{C^{r}(\Gamma)} \left\{\varrho^2+\frac{\delta^{2(p-s)+1}}{\varrho^{2(p-s)-1}}\sum_{j=2}^{N_\varrho}
\frac{2j-1}{(j-1)^{2(p-s)+1}}
\right\}h^{r}\delta^{-r-2(p-s)-1},
\end{split}\end{equation}
 where  we have let
$$
C_\sigma(r) :=\max\{\|u_p\|_{C^{2r}(\R)},\|\sigma_p\|_{C^{r}(\R)}\}.
$$
Note that the sum in~\eqref{eq:bound_discretization_error_proto} can bounded as follows:
$$
\sum_{j=2}^{N_\varrho}
\frac{2j-1}{(j-1)^{2(p-s)+1}}\leq \begin{cases}\displaystyle 3\frac{d_\Gamma}{\varrho},&p=s,\\
6,& p>s.
 \end{cases}
$$
Since $\varrho$ is not a parameter of the method, it must be selected appropriately.
We choose $\varrho = \delta^{\varepsilon}$,
where the exponent $\varepsilon=\varepsilon(p,s)$ is chosen so as to balance the two terms inside the curly brackets above.
By doing so we get
$$
\varepsilon(p,s) = \begin{cases} \displaystyle\frac{1}{2}, &p=s,\\
1,& p>s,
\end{cases}
$$
from here we arrive at the following quadrature error estimate:
\begin{equation}\label{eq:bound_discretization_error}\begin{split}
\left|\int_{\Gamma} f_\delta\de s -
Q_{\Gamma}[f_\delta]\right|
 &\lesssim C_\sigma(r) C_\Phi(r) C_\Psi(r) \|\varphi\|_{C^{r}(\Gamma)} h^{r}\begin{cases}\delta^{-r},&p=s,\\
 \delta^{-r-2(p-s)+1},&p>s.\\
 \end{cases}
\end{split}\end{equation}

In practice, to achieve a discretization error independent of $\delta$, the regularization parameter is chosen as $\delta \propto h^{\mu}$ for $\mu=\mu(p,s)>0$. 
With this choice, we obtain the following estimate for the discretization error in~\eqref{eq:bound_discretization_error}:
$$
E_d(h,k) \lesssim C_\sigma(r) C_\Phi(r) C_\Psi(r) \|\varphi\|_{C^{r}(\Gamma)}\begin{cases}h^{r(1-\mu)},&p=s,\\
 h^{r(1-\mu)-2\mu(p-s)+\mu},&p>s.\\
 \end{cases}
$$
Clearly, the fastest decay of the discretization error as $h\to 0$ is obtained by selecting $r$ as large as possible, so we set $r=\mathfrak{q}+1$.

To balance the discretization error $E_d(h,k)$ with the regularization error $E_r(\delta,k)=\mathcal{O}(\delta^{\mathfrak{m}})$ as $\delta\to 0$, we must choose the exponent $\mu>0$ appropriately. Here the constant implicit in $\mathcal{O}(\delta^{\mathfrak{m}})$ depends on $\varkappa\in(0,k)$ and $\mathfrak{m}$ but is independent of $\delta$. The suitable choice, denoted $\mu^\star = \mu^\star(p,s)$, equates the orders of the two error contributions in~\eqref{eq:error} and yields an overall convergence rate of $\mathcal{O}(h^{\mathfrak o^\star})$, where
\begin{equation}\label{eq:convergence_orders}
\mathfrak o^\star(p,s) = \mu^\star(p,s)\,\mathfrak{m},\quad
\mu^\star(p,s) = \begin{cases}
\dfrac{\mathfrak{q}+1}{\mathfrak{q}+1+\mathfrak{m}}, & (p,s) = (0,0),\\[6pt]
\dfrac{\mathfrak{q}+1}{\mathfrak{q}+2(p-s)+\mathfrak{m}}, & (p,s)\in\{(1,0),(2,1),(2,0)\}.
\end{cases}
\end{equation}

\begin{figure}[htbp]
  \centering
  \includegraphics[width=0.24\textwidth]{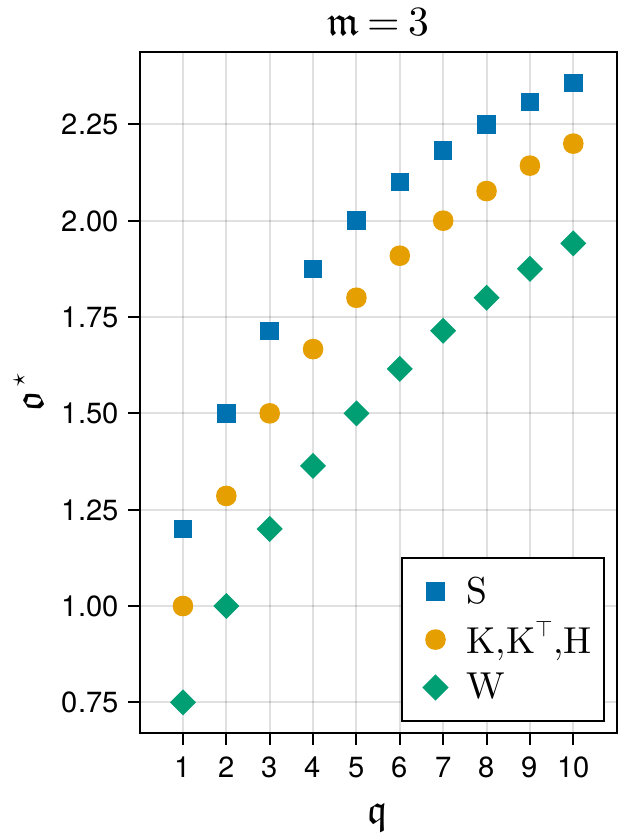} 
  \includegraphics[width=0.24\textwidth]{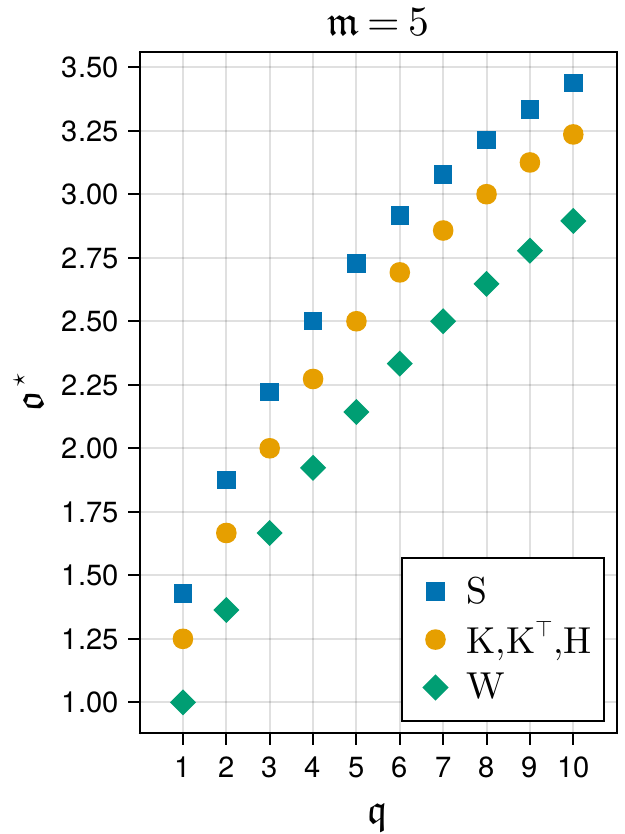} 
  \includegraphics[width=0.24\textwidth]{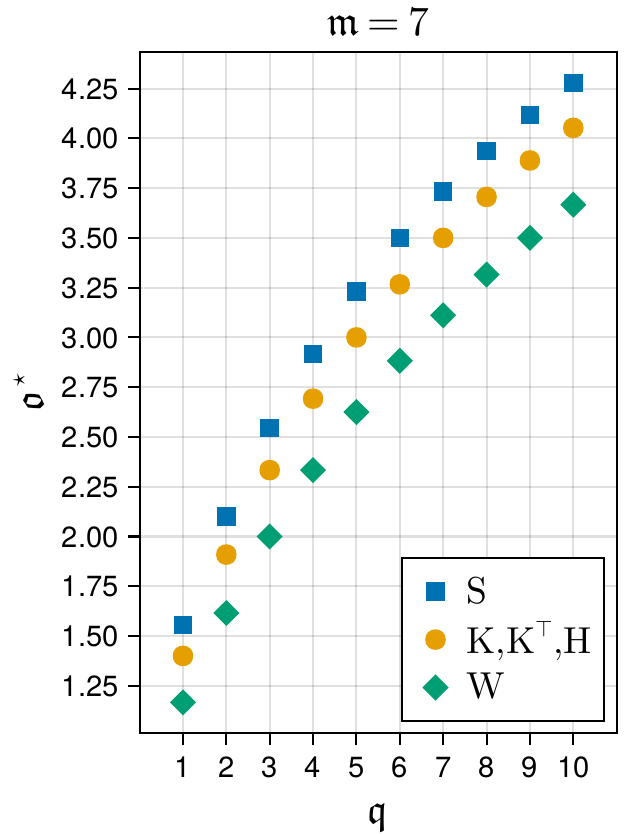} 
  \includegraphics[width=0.24\textwidth]{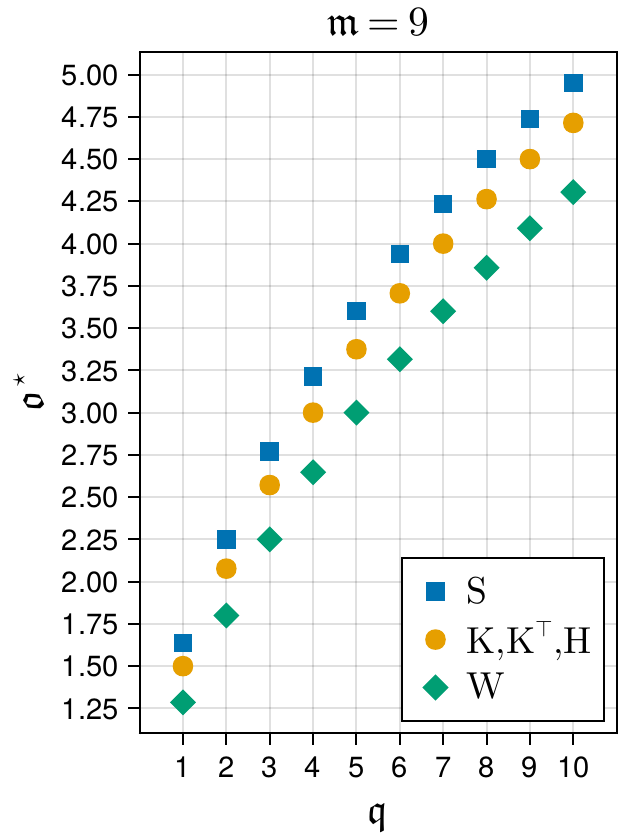}
  \caption{Convergence order $\mathfrak{o}^\star$ as given in~\eqref{eq:convergence_orders} for each integral operator considered in this work—$\mathsf S$, $\mathsf K$, $\mathsf K^\top$, $\mathsf H$, and $\mathsf W$—corresponding to $(p,s)=(0,0)$, $(1,0)$, $(1,0)$, $(2,1)$, and $(2,0)$, respectively, for various quadrature degrees of exactness $\mathfrak{q}$ and regularization orders $\mathfrak m$.}
  \label{fig:effective_orders}
\end{figure}

The following theorem summarizes the theoretical results of this section:
\begin{theorem}[Quadrature and discretization error]\label{thm:discretization_error}
Let $\Gamma$ be a smooth closed surface discretized by a shape-regular family of curved meshes $\{\mathcal{T}_h\}_{h>0}$ with mesh size $h$, and let $Q_\Gamma$ be a composite quadrature rule of degree of exactness $\mathfrak{q}$. Let $\varphi \in C^{\mathfrak{q}+1}(\Gamma)$, $x \in \Gamma$, $k > 0$, and $\mathsf{B} \in \{\mathsf{S}, \mathsf{K}, \mathsf{K}^\top, \mathsf{H}, \mathsf{W}\}$ with associated parameters $(p,s)$ as given in Table~\ref{tab:ops_params}. Let $f_\delta$ denote the regularized integrand for $\mathsf{B}[\varphi](x)$ with regularization parameter $\delta \in (0,1)$, and suppose the regularization error satisfies $E_r(\delta,k) = \mathcal{O}(\delta^{\mathfrak{m}})$ as $\delta \to 0$.

\begin{enumerate}
\item \textbf{Discretization error bound.} There exist constants $C_\sigma$, $C_\Phi$, $C_\Psi$, depending on $k$, $\Gamma$, and $\mathfrak{q}$ but independent of $h$ and $\delta$, such that
\begin{equation}\label{eq:disc_error_thm}
\left|\int_\Gamma f_\delta\,\de s - Q_\Gamma[f_\delta]\right| \lesssim C_\sigma C_\Phi C_\Psi \|\varphi\|_{C^{\mathfrak{q}+1}(\Gamma)}\, h^{\mathfrak{q}+1} \times \begin{cases} \delta^{-(\mathfrak{q}+1)}, & p = s, \\[4pt] \delta^{-(\mathfrak{q}+2(p-s))}, & p > s. \end{cases}
\end{equation}

\item \textbf{Optimal regularization parameter.} Setting $\delta \propto h^{\mu^\star}$ with
\begin{equation}\label{eq:optimal_mu}
\mu^\star(p,s) = \begin{cases} \dfrac{\mathfrak{q}+1}{\mathfrak{q}+1+\mathfrak{m}}, & p = s, \\[8pt] \dfrac{\mathfrak{q}+1}{\mathfrak{q}+2(p-s)+\mathfrak{m}}, & p > s, \end{cases}
\end{equation}
balances the discretization and regularization errors, yielding a total error $E = E_r + E_d = \mathcal{O}(h^{\mathfrak{o}^\star})$ with overall convergence rate
\begin{equation}\label{eq:optimal_order}
\mathfrak{o}^\star(p,s) = \mu^\star(p,s)\cdot\mathfrak{m}.
\end{equation}
\end{enumerate}

\begin{table}[h!]
\centering
\begin{tabular}{ccccc}
\hline
Operator & $p$ & $s$ & $\mu^\star$ & $\mathfrak{o}^\star$ \\
\hline
$\mathsf{S}$ & $0$ & $0$ & $\dfrac{\mathfrak{q}+1}{\mathfrak{q}+1+\mathfrak{m}}$ & $\dfrac{(\mathfrak{q}+1)\mathfrak{m}}{\mathfrak{q}+1+\mathfrak{m}}$ \\[8pt]
$\mathsf{K},\,\mathsf{K}^\top$ & $1$ & $0$ & $\dfrac{\mathfrak{q}+1}{\mathfrak{q}+2+\mathfrak{m}}$ & $\dfrac{(\mathfrak{q}+1)\mathfrak{m}}{\mathfrak{q}+2+\mathfrak{m}}$ \\[8pt]
$\mathsf{H}$ & $2$ & $1$ & $\dfrac{\mathfrak{q}+1}{\mathfrak{q}+2+\mathfrak{m}}$ & $\dfrac{(\mathfrak{q}+1)\mathfrak{m}}{\mathfrak{q}+2+\mathfrak{m}}$ \\[8pt]
$\mathsf{W}$ & $2$ & $0$ & $\dfrac{\mathfrak{q}+1}{\mathfrak{q}+4+\mathfrak{m}}$ & $\dfrac{(\mathfrak{q}+1)\mathfrak{m}}{\mathfrak{q}+4+\mathfrak{m}}$ \\[4pt]
\hline
\end{tabular}
\caption{Parameters $(p,s)$ and optimal exponents $\mu^\star$, $\mathfrak{o}^\star$ for each boundary integral operator. The convergence rate $\mathfrak{o}^\star$ is independent of the wavenumber $k$ and satisfies $\mathfrak{o}^\star \to \mathfrak{q}+1$ as $\mathfrak{m} \to \infty$ as well as $\mathfrak o^\star\to\mathfrak m$ as $\mathfrak{q}\to\infty$.}
\label{tab:ops_params}
\end{table}
\end{theorem}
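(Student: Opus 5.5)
The plan is essentially to assemble the estimates derived just before the statement, with $r=\mathfrak q+1$ fixed throughout. For part 1, start from the near/far splitting $\Gamma=\bigcup_{j=1}^{N_\varrho}\Gamma_{j\varrho}(x)$ of \eqref{eq:1st_ring}--\eqref{eq:other_rings}. On the innermost patch $\Gamma_\varrho(x)$, write $f_\delta$ in the factored form \eqref{eq:int_form_1}. Then invoke Proposition~\ref{lem:rad_funcs}, which converts $C^{r}$ norms of radial functions of $y$ into $C^{2r}$ norms of their even profiles, and apply the chain rule to bound $\|f_\delta\|_{C^r}$ as in \eqref{eq:norm_bound_S}. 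The factor $\delta^{-r}$ comes from $u_p(\cdot/\delta)$ and the prefactor $\delta^{-2(p-s)-1}$ from the scaling. Combining this with \eqref{eq:global_estimate} and the area bound $|\Gamma_\varrho|\lesssim\varrho^2$ gives \eqref{eq:bound_inside_ball}.

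On the rings $j\ge2$, I will instead use the splitting $\sigma_p v_p$. Since $|x-y|/\delta\ge (j-1)\varrho/\delta\ge1$, the derivatives of $v_p$ decay like $((j-1)\varrho/\delta)^{-2(p-s)-1}$, as in \eqref{eq:proto_bound_3}. Together with the area bound $(2j-1)\varrho^2$, this yields \eqref{eq:bound_outside_ball}. Summing over $j$ produces \eqref{eq:bound_discretization_error_proto}. The series $\sum_j (2j-1)/(j-1)^{2(p-s)+1}$ is $O(d_\Gamma/\varrho)$ when $p=s$ and $O(1)$ when $p>s$. Choosing $\varrho=\delta^{1/2}$ or $\varrho=\delta$ respectively balances the two terms in braces, to $O(\delta)$ and $O(\delta^2)$. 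Multiplying by $\delta^{-r-2(p-s)-1}$ gives $\delta^{-r}$ and $\delta^{-r-2(p-s)+1}$, i.e.\ \eqref{eq:disc_error_thm} with $r=\mathfrak q+1$.

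The constants are $C_\sigma=\max\{\|u_p\|_{C^{2r}(\R)},\|\sigma_p\|_{C^r(\R)}\}$, which is finite because $u_p$ is analytic and even and $\sigma_p-1$ has Gaussian decay; $C_\Phi$, from the explicit forms of $\Phi$; and $C_\Psi=\|\widetilde\Psi\|_{C^r}$. None of these depends on $h$ or $\delta$. One check is required here: the choice $\varrho=\delta^{1/2}$ must respect the constraint $\varrho\ge\delta\ge h$ in \eqref{eq:1st_ring}. This holds for $\delta<1$ once $h\le\delta$, which is automatic for $\delta\propto h^{\mu}$ with $\mu<1$ and $h$ small.

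For part 2, substitute $\delta=ch^{\mu}$ into $E=E_r+E_d$. The discretization error becomes $h^{(\mathfrak q+1)(1-\mu)}$ in the case $p=s$, and $h^{(\mathfrak q+1)-\mu(\mathfrak q+2(p-s))}$ in the case $p>s$. The regularization error is $O(h^{\mu\mathfrak m})$, with the constant depending on $\varkappa$ through $|I_{p,m}(\varkappa)|$. This is bounded because $\varkappa=\delta k\in(0,k)$ ranges over a compact set on which the moment integrals are continuous. The total exponent is the minimum of an increasing and a decreasing linear function of $\mu$, so it is maximized where they are equal. Solving $\mu\mathfrak m=(\mathfrak q+1)-\mu(\mathfrak q+2(p-s))$, and the analogous equation for $p=s$, gives \eqref{eq:optimal_mu}, and $\mathfrak o^\star=\mu^\star\mathfrak m$. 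Both exponents satisfy $\mu^\star<1$, so $h\le\delta$ asymptotically, as required above. The table then follows by inserting $(p,s)$. The limits $\mathfrak o^\star\to\mathfrak q+1$ and $\mathfrak o^\star\to\mathfrak m$ are immediate.

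I expect the main obstacle to be the near-field bound, specifically justifying that $\|u_p(|x-\cdot|/\delta)\|_{C^r(\Gamma)}\lesssim\delta^{-r}\|u_p\|_{C^{2r}}$ with constants uniform in $x$. The function $y\mapsto|x-y|$ is not smooth at $y=x$, so smoothness of the composite relies entirely on the evenness of $u_p$ and $\Phi$ via Proposition~\ref{lem:rad_funcs}. A second point to check is that $\widetilde\Psi=\Psi/|x-y|^{2s}$ is genuinely $C^{\mathfrak q+1}$ for $s=1$ in the case of $\mathsf H$. This is clear because $\Psi^{(\mathsf H)}/|x-y|^2=\nu(x)\cdot\nu(y)$.
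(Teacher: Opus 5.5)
Your proposal is correct and follows the paper's own derivation essentially step for step: the ring decomposition $\Gamma_{j\varrho}(x)$, Proposition~\ref{lem:rad_funcs} on the near-field patch, the $\sigma_p v_p$ factorization with \eqref{eq:proto_bound_3} on the far rings, the choice $\varrho=\delta^{1/2}$ or $\varrho=\delta$, and the balancing with $\delta\propto h^{\mu}$. Your extra checks are sound refinements of points the paper leaves implicit: that $\mu^\star<1$ makes $\varrho\ge\delta\ge h$ hold asymptotically, and the max--min justification of the balancing. One caveat applies to you as much as to the paper: the $\delta$-independence of $C_\sigma$ tacitly requires the coefficients $a_\ell(\varkappa)$ to stay bounded for $\varkappa=\delta k\in[0,k]$, which is the same compactness argument you already use for $I_{p,m}$.
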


\begin{remark}[The case of the sphere]\label{rem:sphere_convergence}
Improved convergence rates are expected when $\Gamma$ is a sphere due to its constant curvature. Indeed, suppose without loss of generality that $\Gamma$ is a sphere of radius $r>0$ centered at the origin. Since $\nu(x) = x/r$ for 
$x \in \Gamma$, one has the identities
$$
  \nu(y)\cdot(x-y) = \frac{x\cdot y - r^2}{r} = -\nu(x)\cdot(x-y)
  \qquad\text{and}\qquad
  |x-y|^2 = 2(r^2 - x\cdot y).
$$
It follows that
$$
  \frac{\Psi^{(\mathsf{K})}(x,y)}{|x-y|^2} = -\frac{1}{2r}
  \qquad\text{and}\qquad
  \frac{\Psi^{(\mathsf{W})}(x,y)}{|x-y|^4} = -\frac{1}{4r^2}
$$
are both constant, hence smooth. Consequently, for the sphere one may 
take $(p,s) = (1,1)$ for $\mathsf{K}$ and $\mathsf{K}^\top$, and 
$(p,s) = (2,2)$ for $\mathsf{W}$ in~\eqref{eq:int_form_1}, leading to improved convergence rates for the overall methodology. The resulting sphere-specific 
parameters and rates are displayed in Table~\ref{tab:ops_params_sphere}.

  \begin{table}[h!]
\centering
\begin{tabular}{ccccc}
\hline
Operator & $p$ & $s$ & $\mu^\star$ & $\mathfrak{o}^\star$ \\
\hline
$\mathsf{S}$ & $0$ & $0$ 
  & $\dfrac{\mathfrak{q}+1}{\mathfrak{q}+1+\mathfrak{m}}$ 
  & $\dfrac{(\mathfrak{q}+1)\mathfrak{m}}{\mathfrak{q}+1+\mathfrak{m}}$ \\[8pt]
$\mathsf{K},\,\mathsf{K}^\top$ & $1$ & $1$ 
  & $\dfrac{\mathfrak{q}+1}{\mathfrak{q}+1+\mathfrak{m}}$ 
  & $\dfrac{(\mathfrak{q}+1)\mathfrak{m}}{\mathfrak{q}+1+\mathfrak{m}}$ \\[8pt]
$\mathsf{H}$ & $2$ & $1$ 
  & $\dfrac{\mathfrak{q}+1}{\mathfrak{q}+2+\mathfrak{m}}$ 
  & $\dfrac{(\mathfrak{q}+1)\mathfrak{m}}{\mathfrak{q}+2+\mathfrak{m}}$ \\[8pt]
$\mathsf{W}$ & $2$ & $2$ 
  & $\dfrac{\mathfrak{q}+1}{\mathfrak{q}+1+\mathfrak{m}}$ 
  & $\dfrac{(\mathfrak{q}+1)\mathfrak{m}}{\mathfrak{q}+1+\mathfrak{m}}$ \\[4pt]
\hline
\end{tabular}
\caption{Sphere-specific parameters $(p,s)$ and optimal exponents 
$\mu^\star$, $\mathfrak{o}^\star$ for each boundary integral operator. 
For the sphere, $\mathsf{S}$, $\mathsf{K}$, $\mathsf{K}^\top$, and 
$\mathsf{W}$ all achieve the same rates, with $\mathfrak{o}^\star \to 
\mathfrak{q}+1$ as $\mathfrak{m}\to\infty$ and $\mathfrak{o}^\star\to 
\mathfrak{m}$ as $\mathfrak{q}\to\infty$.}
\label{tab:ops_params_sphere}
\end{table}
\end{remark}

\begin{remark}
The analysis in this section applies to a specific surface discretization based on tessellation and composite interpolatory quadrature rules. Alternative approaches—such as global parametrizations or overlapping patch representations combined with \emph{spectrally accurate quadrature rules} (e.g., the trapezoidal rule), as in~\cite{beale2024extrapolated,beale2025high,beale2016simple,Bruno:2001ima,GaneshGraham2004}, or non-overlapping quad-patch representations with tensor-product Fejér-type quadrature rules, as in~\cite{perez2019harmonic,perez2019planewave,BrunoGarza:20}—can, in practice, achieve higher convergence rates than those predicted by Theorem~\ref{thm:discretization_error}.
\end{remark}

\section{Numerical results}\label{sec:numerics}
This section presents numerical examples validating the proposed high-order regularization procedures. All experiments are performed using the open-source Julia package \texttt{Inti.jl}. The smooth surface $\Gamma$ is discretized into curved triangular elements generated with Gmsh~\cite{geuzaine2009gmsh}; each element is parametrized by a high-order polynomial isoparametric map from a reference triangle to the physical surface. Numerical integration over each element is carried out using the Vioreanu--Rokhlin quadrature rule~\cite{Vioreanu:14}, which places nodes strictly in the interior of each element. We consider degrees of exactness $\mathfrak{q}\in\{2,4,5\}$, though other high-order rules may equally be employed.

\subsection{Regularization error}\label{sec:validation}
Our first numerical example validates the regularized boundary integral operators 
constructed in Section~\ref{sec:general}. Specifically, we verify that the 
regularization error bound~\eqref{eq:bound_reg_error} is sharp for the operators 
$\mathsf{S}_\delta$, $\mathsf{K}_\delta$, $\mathsf{K}^\top_\delta$, and 
$\mathsf{T}_\delta$, constructed in Sections~\ref{sec:single_layer},
\ref{sec:double_layer}, and~\ref{sec:hyper}, respectively.

To this end, we take $\Gamma = \mathbb{S}^2$, for which the eigenvalues and eigenfunctions of all four Helmholtz boundary integral operators are known in closed form via spherical harmonics (see, e.g.,~\cite{maltez2024combined}):
\begin{align}
    \mathsf{S}[Y_{\ell}^n] &= \lambda_{\ell}^{(\mathsf{S})} Y_{\ell}^n, 
    & \lambda_{\ell}^{(\mathsf{S})} &= ik\, j_{\ell}(k)\, h_{\ell}^{(1)}(k), 
    \label{eq:eigS}\\
    \mathsf{K}[Y_{\ell}^n] &= \lambda_{\ell}^{(\mathsf{K})} Y_{\ell}^n = 
    \mathsf{K}^{\top}[Y_{\ell}^n],
    & \lambda_{\ell}^{(\mathsf{K})}  &=\lambda_{\ell}^{(\mathsf{K}')}= -\tfrac{1}{2} + ik^2 j_{\ell}^{\prime}(k)\, 
    h_{\ell}^{(1)}(k),
    \label{eq:eigK}\\
    \mathsf{T}[Y_{\ell}^n] &= \lambda_{\ell}^{(\mathsf{T})} Y_{\ell}^n,
    & \lambda_{\ell}^{(\mathsf{T})} &= ik^3 j_{\ell}^{\prime}(k)\,
    \bigl(h_{\ell}^{(1)}\bigr)^{\prime}(k),
    \label{eq:eigT}
\end{align}
where $Y_{\ell}^n$ with $\ell\in\N_0$ and $n\in\{-\ell,\ldots,\ell\}$, denotes the spherical harmonics, and $j_\ell$, 
$h_\ell^{(1)}$ denote the spherical Bessel and Hankel functions of the 
first kind, respectively.

\begin{figure}[htbp]
  \centering
  \includegraphics[width=0.24\textwidth]{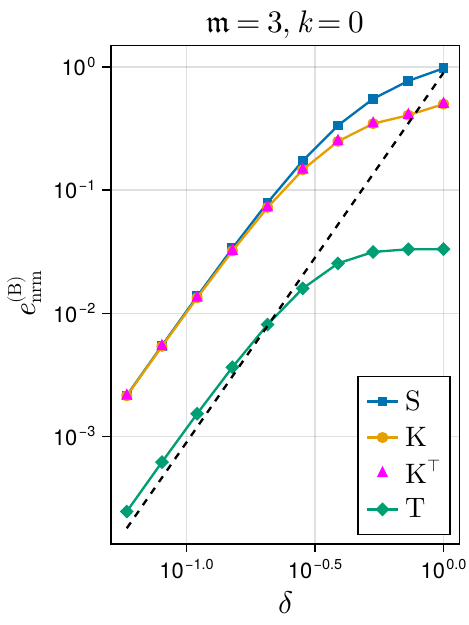} 
  \includegraphics[width=0.24\textwidth]{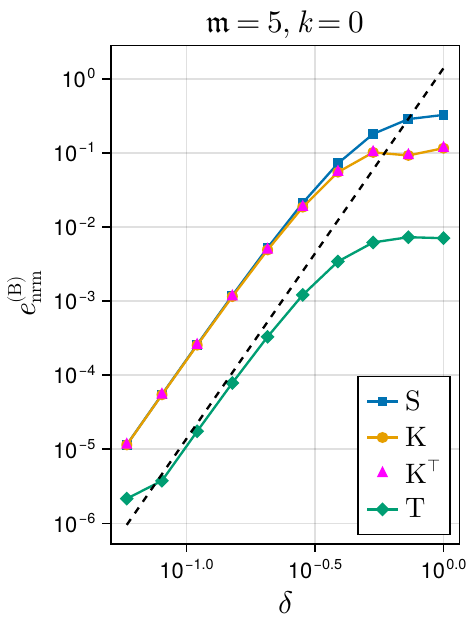} 
  \includegraphics[width=0.24\textwidth]{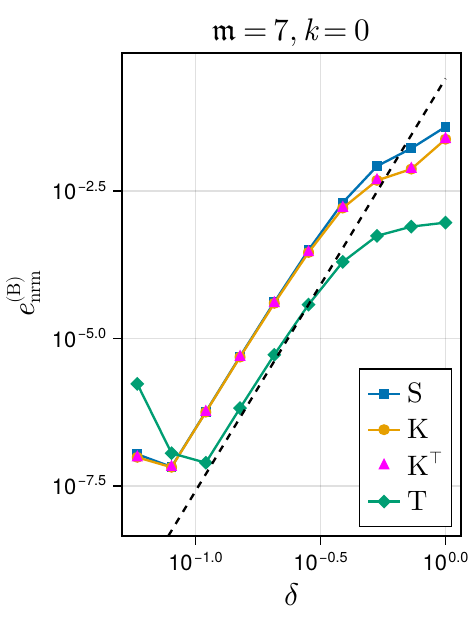} 
  \includegraphics[width=0.24\textwidth]{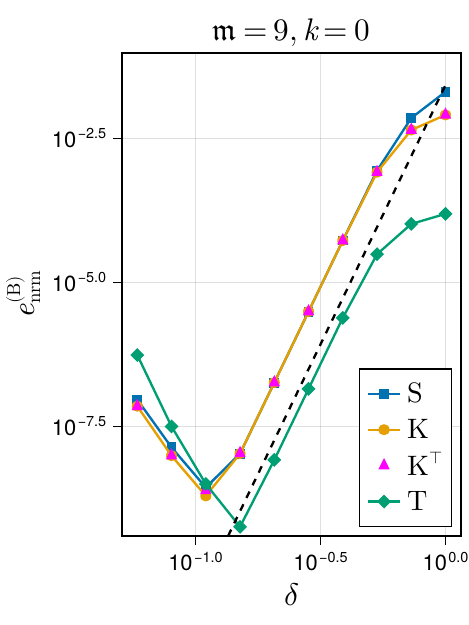}\\
  \includegraphics[width=0.24\textwidth]{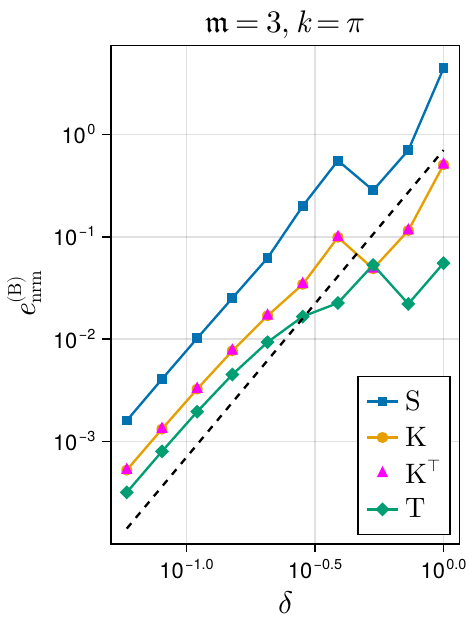} 
  \includegraphics[width=0.24\textwidth]{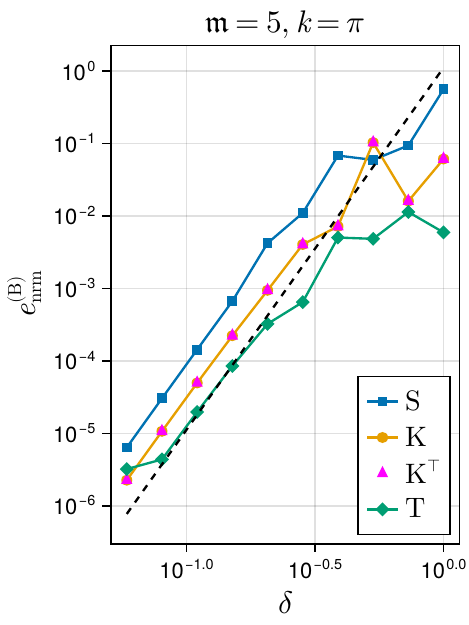} 
  \includegraphics[width=0.24\textwidth]{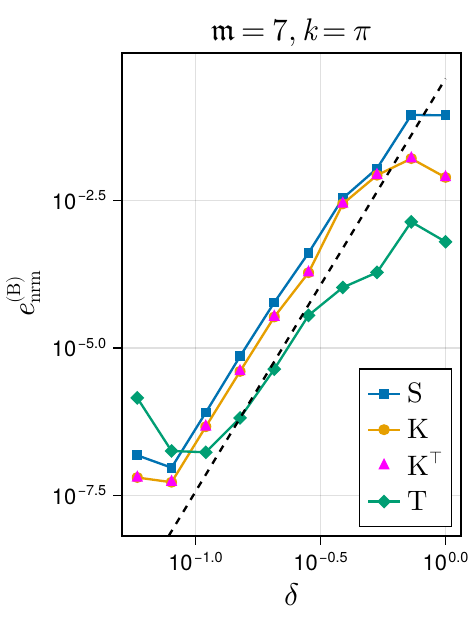} 
  \includegraphics[width=0.24\textwidth]{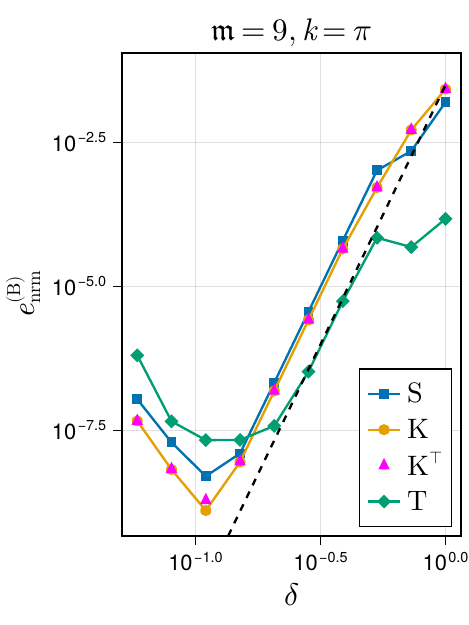}
  \caption{Normalized error $e^{(\mathsf{B})}_{\rm nrm}$ for $\mathsf B\in\{\mathsf S,\mathsf K,\mathsf K^\top,\mathsf T\}$ defined 
in~\eqref{eq:rel_norm_error} as a function of the regularization parameter 
$\delta$ for the single layer ($\mathsf{S}$), double layer ($\mathsf{K}$), 
adjoint double layer ($\mathsf{K}^\top$), and hypersingular ($\mathsf{T}$) 
operators, for regularization orders $\mathfrak{m} \in\{ 3, 5, 7, 9\}$ (left to 
right, top to bottom). The dashed black line shows the theoretical 
$\mathcal{O}(\delta^{\mathfrak{m}})$ convergence rate. Computations are 
performed on $\mathbb{S}^2$ with wavenumber $k = 0$ (top row) and $k=\pi$ (bottom row).}
\label{fig:delta_convergence}
\end{figure}

As input density we take a linear combination of spherical harmonics,
\begin{equation}
    \varphi = \sum_{\ell=0}^{L}\sum_{n=-\ell}^{\ell} c_{\ell n}\, Y_{\ell}^n,
    \label{eq:f_input}
\end{equation}
with $L = 5$ and coefficients  given by 
$$c_{\ell n} = \begin{cases}2^{-n+2},& \ell=n\in\{0,1,2,3,4,5\},\\
  0,&\text{ otherwise.}\end{cases}$$

The exact action of each operator on $f$ is then given by
\begin{equation}
    \mathsf{B}[\varphi] = \sum_{\ell=0}^{L}\sum_{n=-\ell}^{\ell} 
    \lambda_{\ell}^{(\mathsf B)}\, c_{\ell n}\, Y_{\ell}^n, \quad 
    \mathsf{B} \in \{\mathsf{S}, \mathsf{K},\mathsf K^\top, \mathsf{T}\}.
    \label{eq:Bf_exact}
\end{equation}

We measure the approximate relative $L^2$ error in the operator evaluation via
\begin{equation}
e^{(\mathsf B)}(\delta,h,k):=\frac{\displaystyle\sqrt{\sum_{j=1}^{N_Q} w_j \bigl|\mathsf{B}[\varphi](x_j) - 
\widetilde{\mathsf{B}}_{\delta}[\varphi](x_j)\bigr|^2}}
{\displaystyle\sqrt{\sum_{j=1}^{N_Q} w_j \bigl|\mathsf{B}[\varphi](x_j)\bigr|^2}},
\label{eq:rel_error}
\end{equation}
where $\{x_j\}_{j=1}^{N_Q}$ denote the quadrature points and $\widetilde{\mathsf{B}}_{\delta}$ is the quadrature approximation of the regularized operator $\mathsf{B}_\delta$.

Since the regularization error bound~\eqref{eq:bound_reg_error} derived in Section~\ref{sec:general} takes the form $|I^{(\mathsf{B})}_{p,m}(\varkappa)|\delta^{\mathfrak m}$, where $\varkappa = \delta k$ is the scaled wavenumber and $\mathfrak{m} = 2(m-p)+1$ is the regularization order, we introduce the normalized error measure
\begin{equation}\label{eq:rel_norm_error}
e^{(\mathsf{B})}_{\rm nrm}(\delta,h,k) = \frac{e^{(\mathsf B)}(\delta,h,k)}{|I^{(\mathsf{B})}_{p,m}(\varkappa)|}.
\end{equation}
Dividing by $|I^{(\mathsf{B})}_{p,m}(\varkappa)|$ removes the $\varkappa$-dependent prefactor and isolates the algebraic factor $\delta^{\mathfrak{m}}$, making the convergence rate more clearly visible in log-log plots. The normalization constant $I^{(\mathsf{B})}_{p,m}(\varkappa)$ can be computed numerically via the recurrence relations or series expansions in Appendix~\ref{sec:coeff}, or alternatively by applying an adaptive quadrature rule to near machine-precision accuracy.
 
Figure~\ref{fig:delta_convergence} displays the normalized relative errors~\eqref{eq:rel_norm_error} for regularization orders $\mathfrak{m}\in\{3,5,7,9\}$, with fixed mesh size $h = 5\cdot10^{-2}$, quadrature degree of exactness $\mathfrak{q}=4$, wavenumbers $k\in\{0,\pi\}$, and $\delta\in[h,1]$. The expected $\mathcal{O}(\delta^\mathfrak{m})$ rates are shown as dashed lines, confirming that the theoretical bounds~\eqref{eq:reg_error_S}, \eqref{eq:reg_error_K}, and~\eqref{eq:reg_error_T} are achieved in all cases over the range of $\delta$ for which the regularization error dominates. The corresponding moment systems for the regularizing function coefficients are assembled with one more equation than unknowns to avoid the ill-conditioning discussed in Remark~\ref{rem:ill-cond_mat}. For small $\delta$, the error curves flatten and eventually grow as $\delta\to h$, particularly for higher regularization orders: as $\delta$ decreases, the regularized kernel becomes increasingly concentrated near the singularity, and the fixed discretization can no longer resolve it accurately.

Our second set of experiments isolates the quadrature error by fixing the regularization parameter $\delta$  and regularization order $\mathfrak{m}$ examining the convergence as $h \to 0$. In Figure~\ref{fig:h_convergence_delta}, we plot the relative error $e^{(\mathsf{B})}$ for $\mathsf{B} \in \{\mathsf{S}, \mathsf{K},\mathsf K^\top, \mathsf{T}\}$, $\delta = 0.05$, $k=\pi$, $\mathfrak{m}=5$, and $\mathfrak{q} \in \{2, 4, 5\}$ as a function of the mesh size $h$. The expected $\mathcal{O}(h^{\mathfrak{q}+1})$ convergence rates are indicated by dashed lines, confirming that the theoretical quadrature error bounds are attained in all cases. For sufficiently small $h$, the error curves level off as the regularization error becomes dominant.

It is worth mentioning that, used in this way, the proposed regularization method behaves like many other methods that handle the singularity of boundary integral operators locally: a specialized approach is used for integration at the singular element and its direct neighbors, while a standard quadrature rule is employed for smooth integrands outside that region, but no proper treatment of the nearly singular integrals that arise as the mesh is refined is applied. As the mesh is refined, the seemingly high-order method eventually breaks down as the error from nearly singular integrals becomes dominant. This aspect is, unfortunately, often overlooked and its omission is common practice in the literature.

\begin{figure}[htbp]
  \centering
  \includegraphics[width=0.31\textwidth]{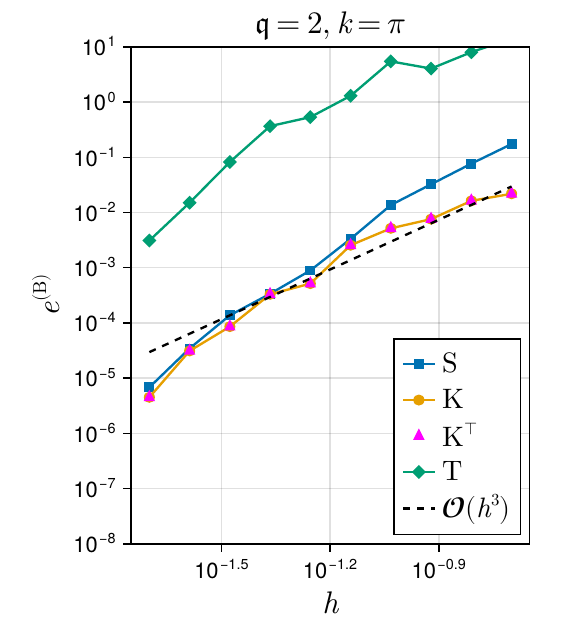} 
  \includegraphics[width=0.31\textwidth]{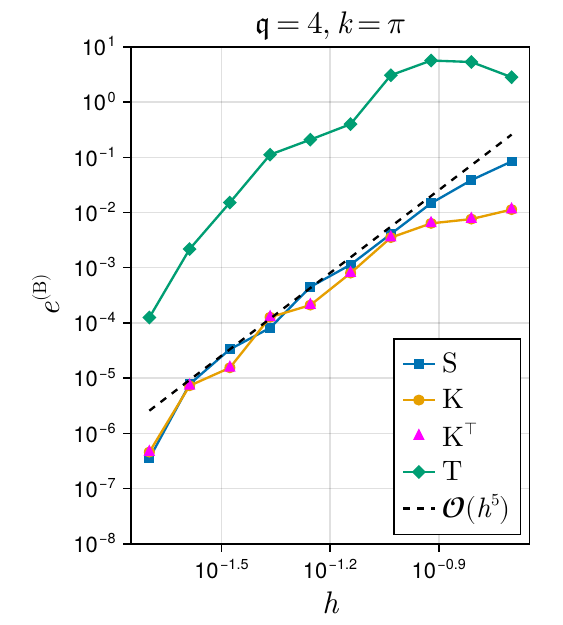} 
  \includegraphics[width=0.31\textwidth]{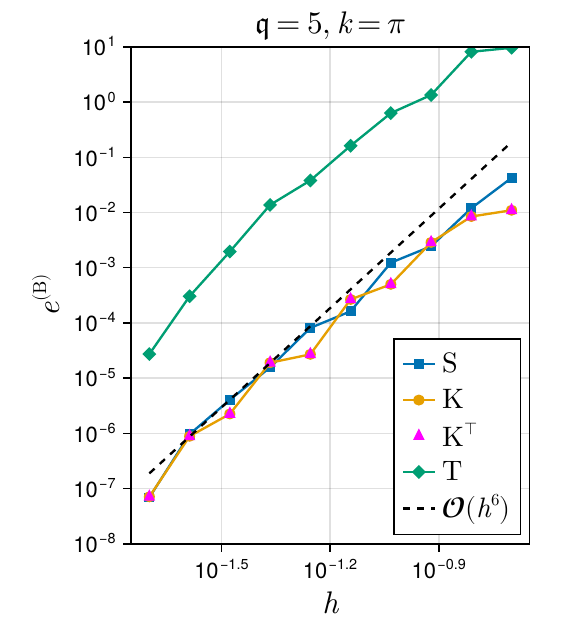} 
  \caption{Convergence of the relative error $e^{(\mathsf{B})}$ for $\mathsf{B} \in \{\mathsf{S}, \mathsf{K}, \mathsf{K}^\top, \mathsf{T}\}$ as a function of the mesh size $h$, with fixed regularization parameter $\delta = 0.05$, wavenumber $k=\pi$, and regularization order $\mathfrak{m}=5$, for quadrature orders $\mathfrak{q} \in \{2,4,5\}$. Dashed lines indicate the expected $\mathcal{O}(h^{\mathfrak{q}+1})$ convergence rates.}
\label{fig:h_convergence_delta}
\end{figure}

\subsection{Combined regularization and discretization error}
In order to (partially) validate the combined regularization and discretization error analysis of Section~\ref{sec:discretization_error}, we use the same setup as in the previous example and examine the relative error $e^{(\mathsf{B})}$ defined in~\eqref{eq:rel_error} for various mesh sizes $h$, with the regularization parameter set to $\delta = c\, h^{\mu^\star}$, where $\mu^\star = \mu^\star(p,s)$ is given by~\eqref{eq:convergence_orders} and $c > 0$ is a fixed constant in each numerical experiment.  The theoretical  
convergence orders $\mathfrak o^\star$ are given in Table~\ref{tab:ops_params_sphere}, in this case, as the surface considered is the unit sphere $\Gamma=\mathbb S^2$.

Figure~\ref{fig:h_convergence_k_0} displays the relative error~\eqref{eq:rel_error} for all four boundary integral operators in the Laplace case $k=0$. The dashed lines indicate the theoretical $\mathcal{O}(h^{\mathfrak{o}^\star})$ convergence rates, which are closely matched in all cases, validating the error analysis of Section~\ref{sec:discretization_error} tailored to the spherical case; see Remark~\ref{rem:sphere_convergence}. 
\begin{figure}[htbp]
  \centering
  \includegraphics[width=0.24\textwidth]{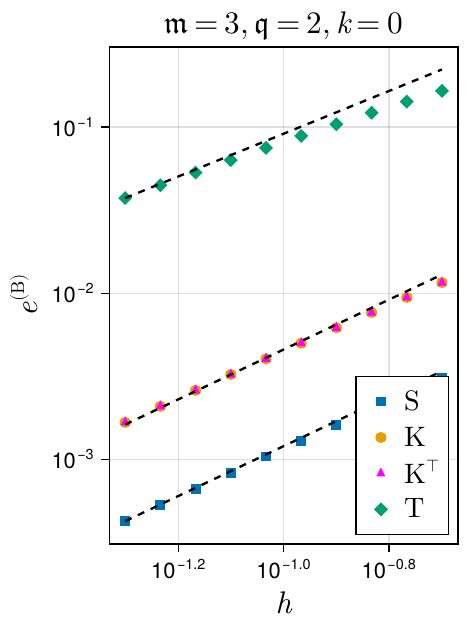} 
  \includegraphics[width=0.24\textwidth]{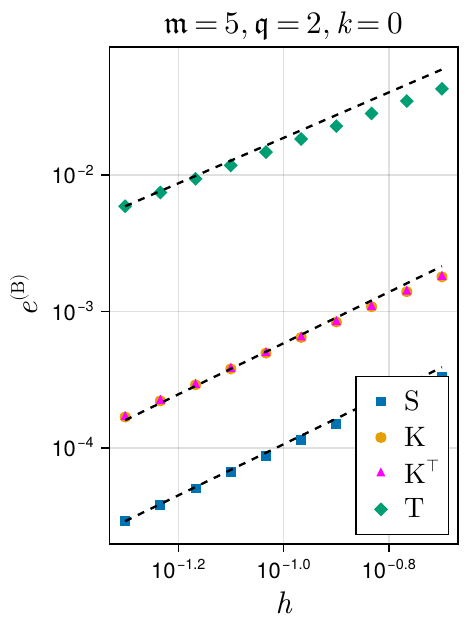} 
  \includegraphics[width=0.24\textwidth]{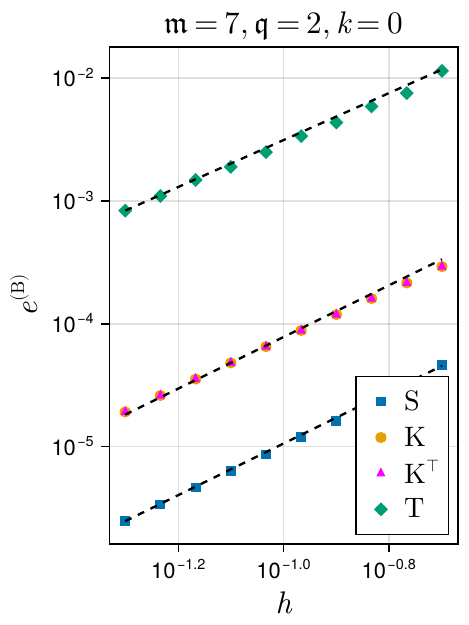} 
  \includegraphics[width=0.24\textwidth]{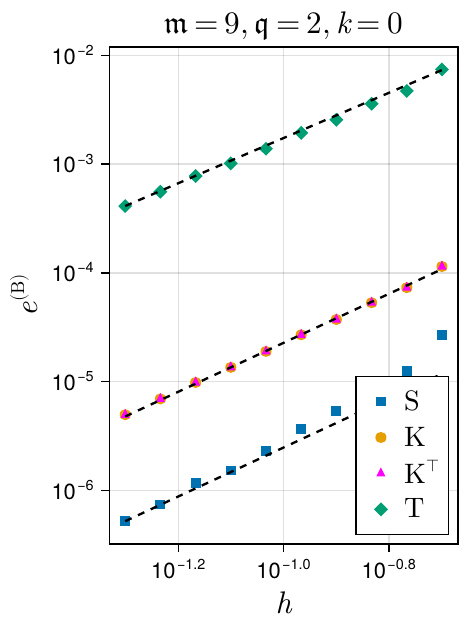}\\
  \includegraphics[width=0.24\textwidth]{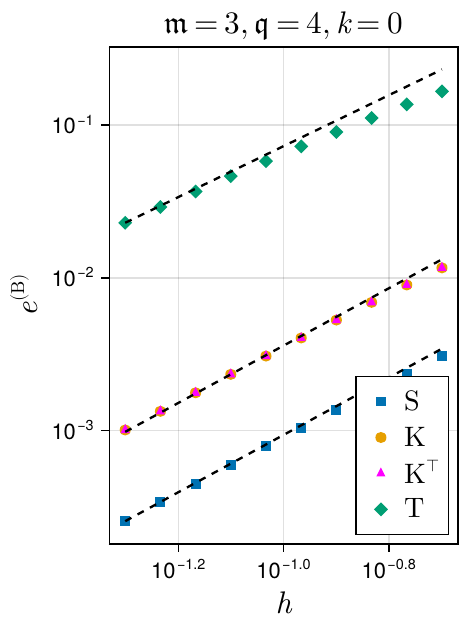} 
  \includegraphics[width=0.24\textwidth]{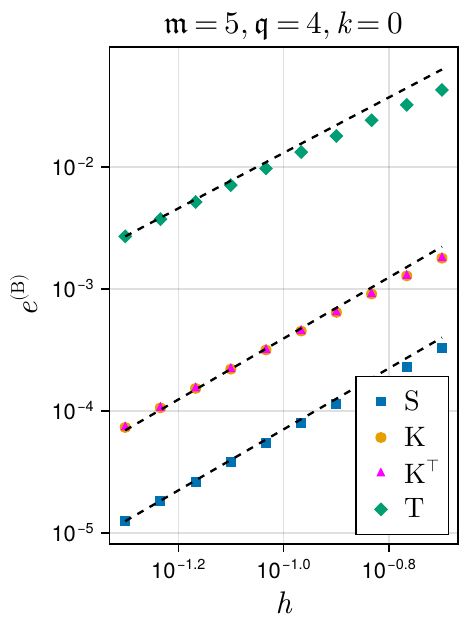} 
  \includegraphics[width=0.24\textwidth]{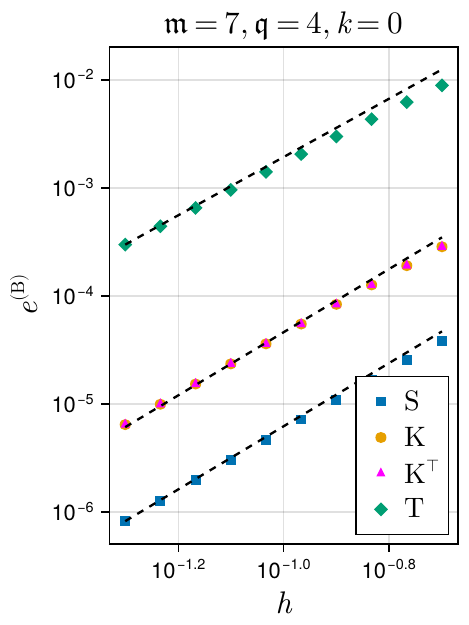} 
  \includegraphics[width=0.24\textwidth]{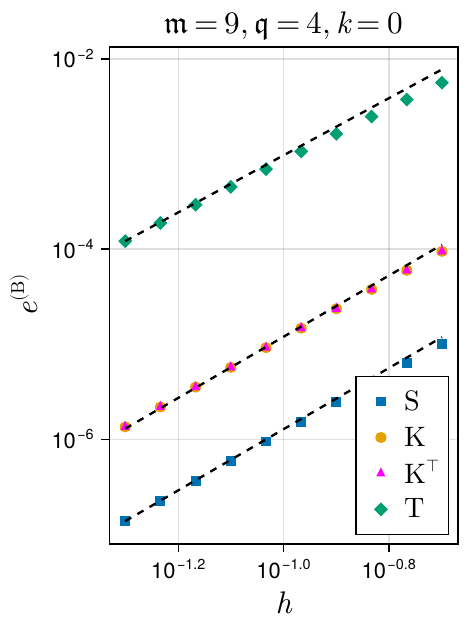}\\
  \includegraphics[width=0.24\textwidth]{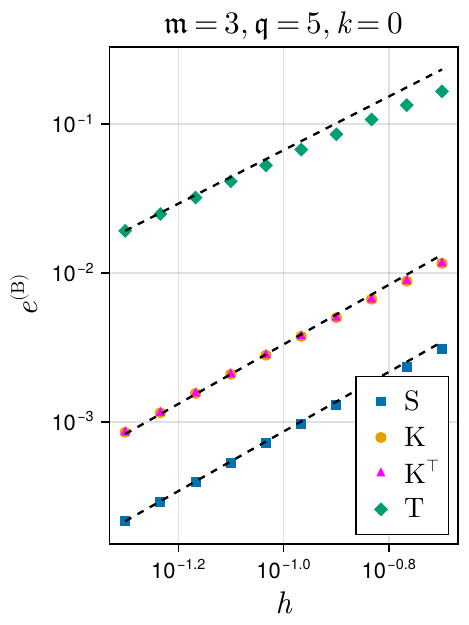} 
  \includegraphics[width=0.24\textwidth]{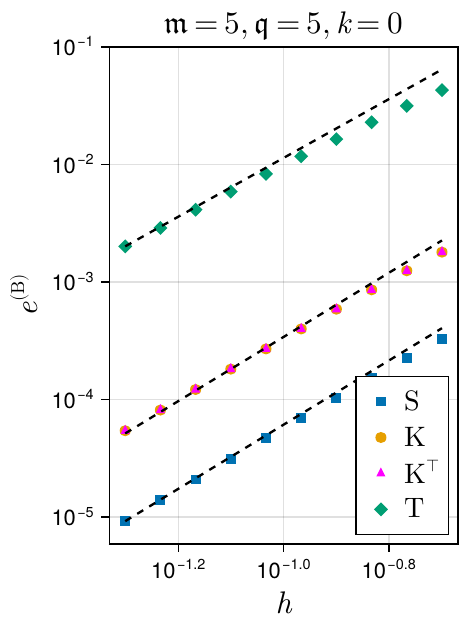} 
  \includegraphics[width=0.24\textwidth]{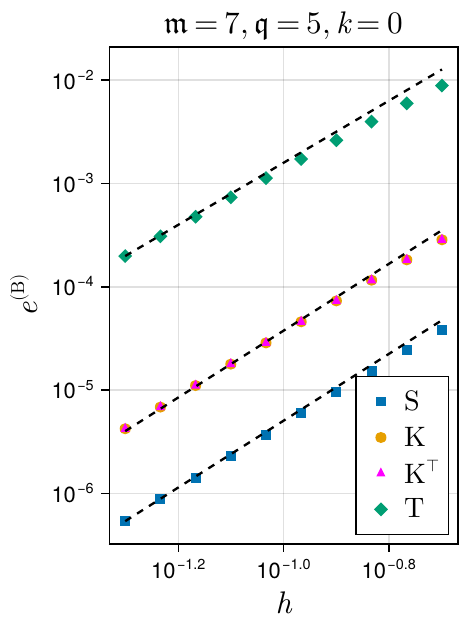} 
  \includegraphics[width=0.24\textwidth]{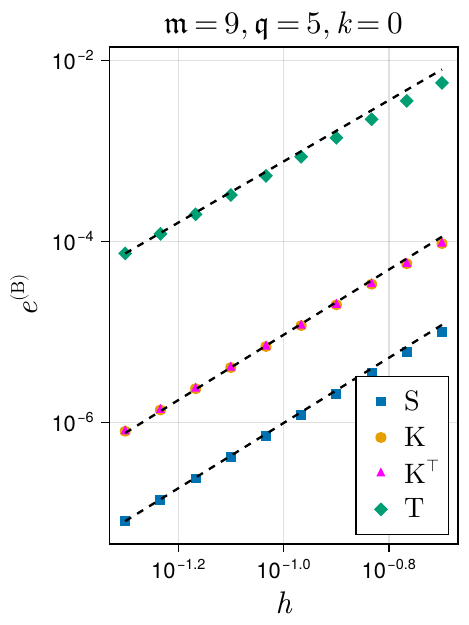}
  \caption{Overall relative error $e^{(\mathsf{B})}$ for $\mathsf B\in\{\mathsf S,\mathsf K,\mathsf K^\top,\mathsf T\}$ defined in~\eqref{eq:rel_error} as a function of the mesh size  
$h$ for the single layer ($\mathsf{S}$), double layer ($\mathsf{K}$), 
adjoint double layer ($\mathsf{K}^\top$), and hypersingular ($\mathsf{T}$) 
operators, for regularization orders $\mathfrak{m} \in\{ 3, 5, 7, 9\}$ (left to 
right, top to bottom) and for quadrature rules of degree of exactness $\mathfrak{q}\in\{2,4,5\}$. The dashed black line shows the theoretical $\mathcal{O}(h^{\mathfrak{o}^\star})$ convergence rate established in Section~\ref{sec:discretization_error}. Computations are performed on $\Gamma=\mathbb{S}^2$ with wavenumber $k = 0$.}
\label{fig:h_convergence_k_0}
\end{figure}

For non-zero wavenumbers, however, the raw relative error~\eqref{eq:rel_error} does not exhibit a clean $\mathcal{O}(h^{\mathfrak{o}^\star})$ behaviour as $h\to 0$. This is because the combined error bound satisfies
\begin{equation}\label{eq:error_model}
e^{(\mathsf{B})}(\delta,h,k) \lesssim \left(c_0 + c_1 |I_{p,m}^{(\mathsf{B})}(\varkappa)|\right) h^{\mathfrak{o}^\star}
\end{equation}
for sufficiently small $h>0$, here $c_0, c_1 > 0$ are constants independent of $k$ and $h$, and $I_{p,m}^{(\mathsf{B})}(\varkappa)$, introduced in~\eqref{eq:important_integral_gen}, is a moment integral that depends on the scaled wavenumber $\varkappa = \delta k \propto h^{\mu}k$, introducing an $h$-varying prefactor that obscures the algebraic convergence rate. To remove this dependence, we estimate $c_0$ and $c_1$ via a least-squares fit to the observed errors $e^{(\mathsf{B})}(\delta,h,k)$ and the values of $|I_{p,m}^{(\mathsf{B})}(\varkappa)|$ across all mesh sizes, for each fixed $\mathfrak{o}^\star$, and rescale so that the normalized error

\begin{equation}\label{eq:norm_model_error}
\widetilde{e}^{(\mathsf{B})}_{\rm nrm}(\delta,h,k) := e^{(\mathsf{B})}(\delta,h,k)\left(c_0 + c_1 |I_{p,m}^{(\mathsf{B})}(\varkappa)|\right)^{-1}
\end{equation}
matches the relative error at the finest mesh size. Figure~\ref{fig:h_convergence_k_1} displays $\widetilde{e}^{(\mathsf{B})}_{\rm nrm}$ for $k=\pi$, recovering to a certain extent the expected $\mathcal{O}(h^{\mathfrak{o}^\star})$ behaviour.

\begin{figure}[htbp]
  \centering
  \includegraphics[width=0.24\textwidth]{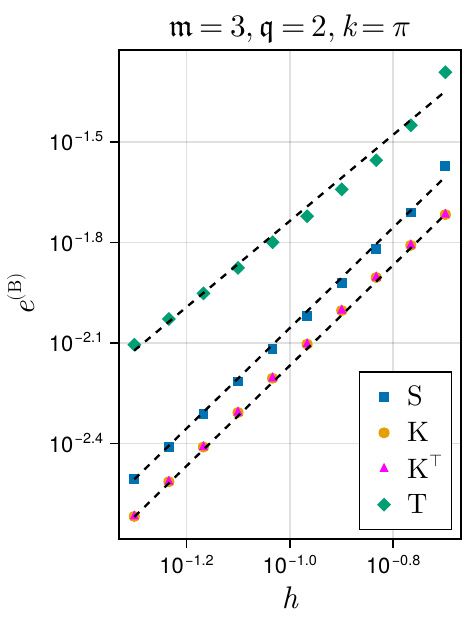} 
  \includegraphics[width=0.24\textwidth]{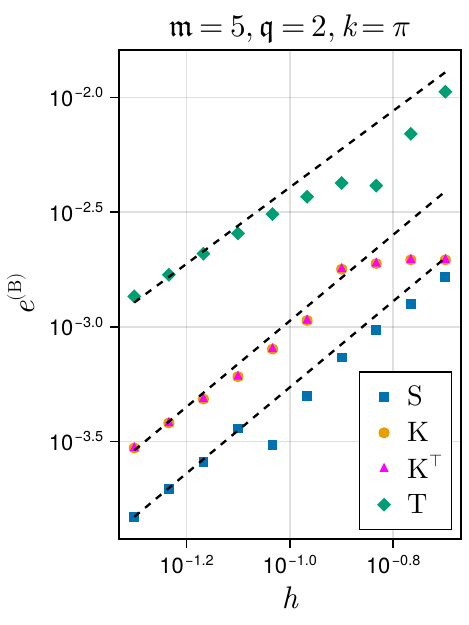} 
  \includegraphics[width=0.24\textwidth]{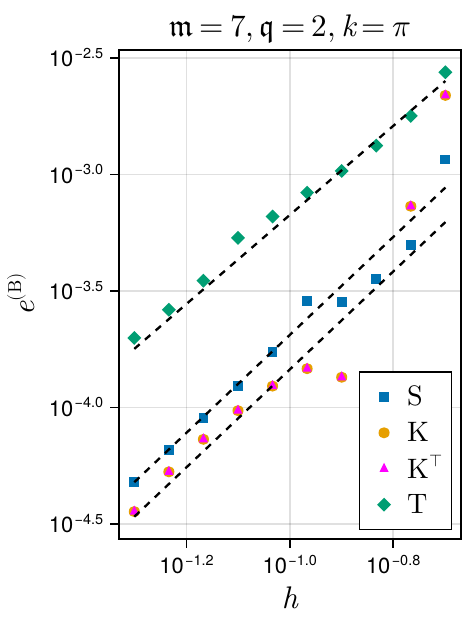} 
  \includegraphics[width=0.24\textwidth]{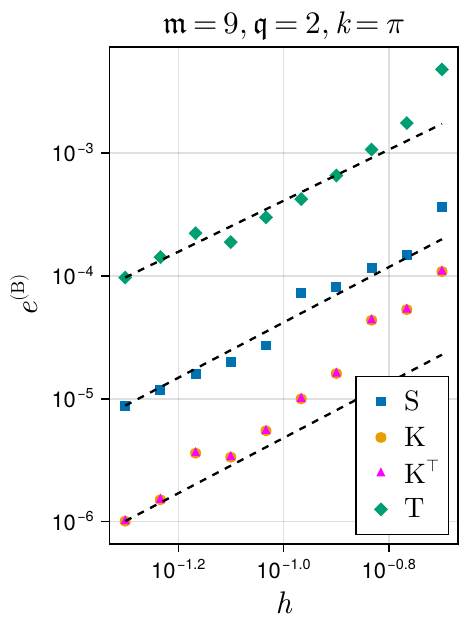}\\
  \includegraphics[width=0.24\textwidth]{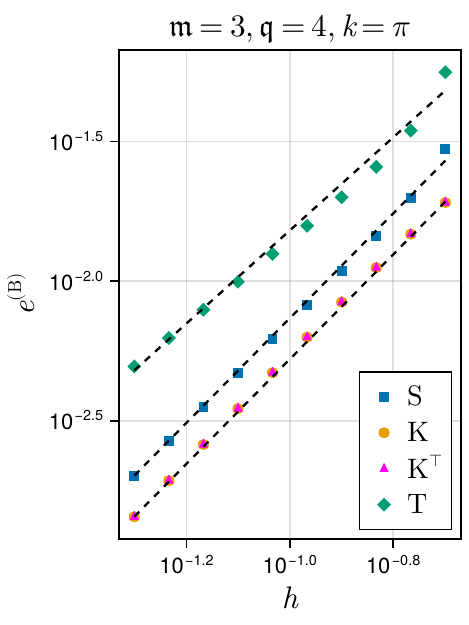} 
  \includegraphics[width=0.24\textwidth]{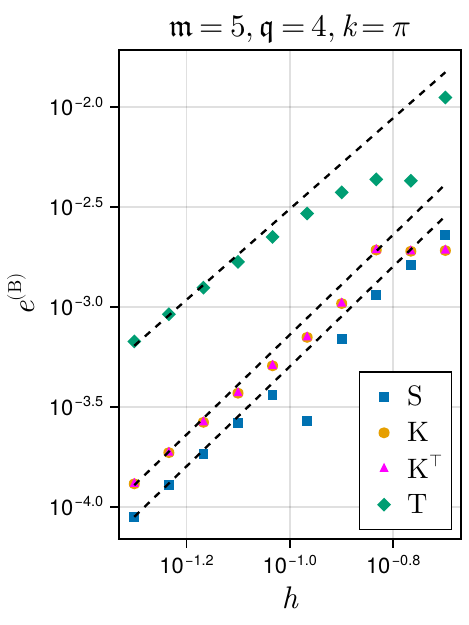} 
  \includegraphics[width=0.24\textwidth]{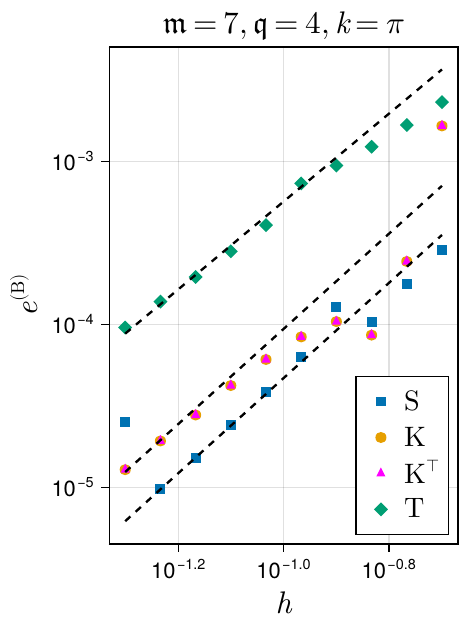} 
  \includegraphics[width=0.24\textwidth]{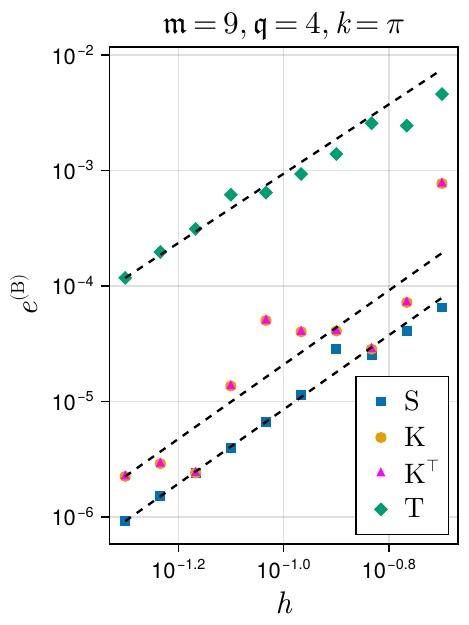}\\
  \includegraphics[width=0.24\textwidth]{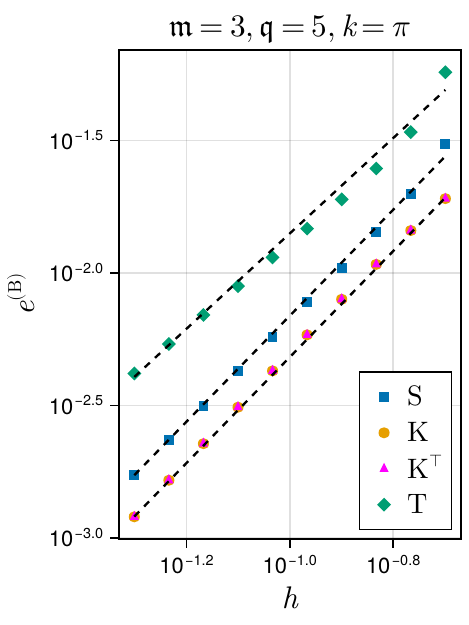} 
  \includegraphics[width=0.24\textwidth]{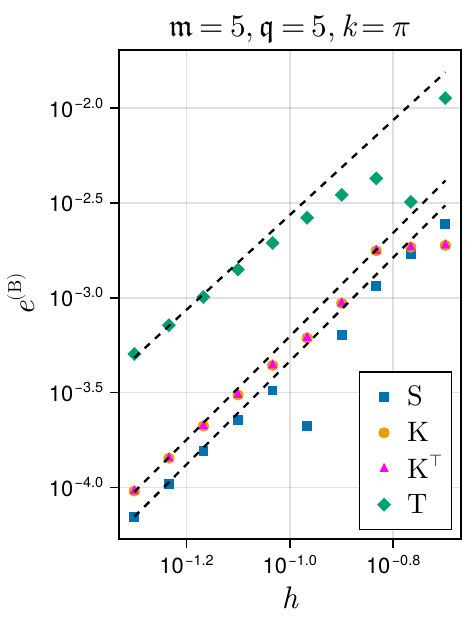} 
  \includegraphics[width=0.24\textwidth]{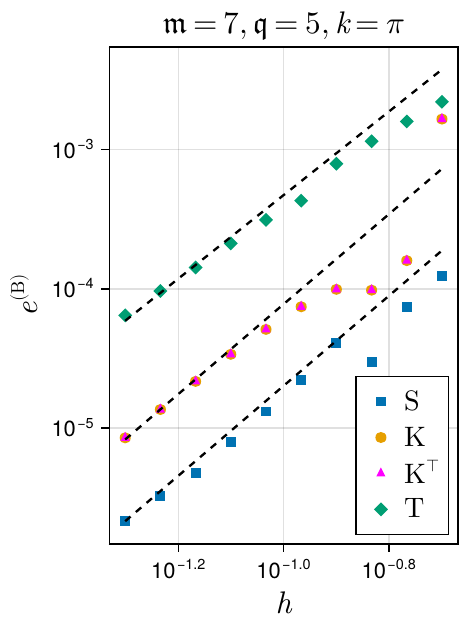} 
  \includegraphics[width=0.24\textwidth]{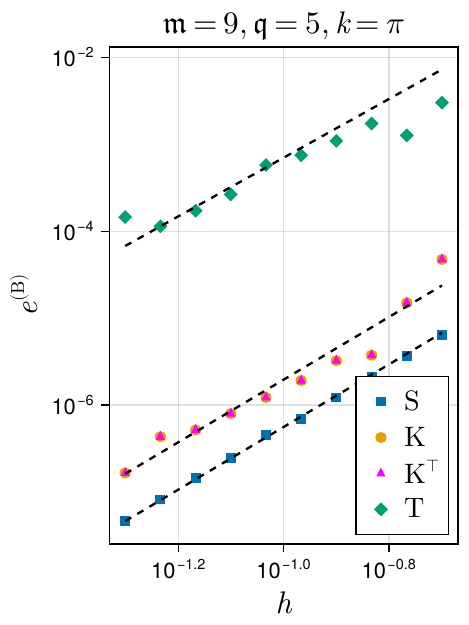}
   \caption{Normalized relative error $\widetilde e^{(\mathsf{B})}_{\rm nrm}$ for $\mathsf B\in\{\mathsf S,\mathsf K,\mathsf K^\top,\mathsf T\}$ defined in~\eqref{eq:norm_model_error} as a function of the mesh size  $h$ for the single layer ($\mathsf{S}$), double layer ($\mathsf{K}$), adjoint double layer ($\mathsf{K}^\top$), and hypersingular ($\mathsf{T}$) operators, for regularization orders $\mathfrak{m} \in\{ 3, 5, 7, 9\}$ (left to right, top to bottom) and for quadrature rules of degree of exactness $\mathfrak{q}\in\{2,4,5\}$. The dashed black line shows the theoretical $\mathcal{O}(h^{\mathfrak{o}^\star})$ convergence rate established in Section~\ref{sec:discretization_error}. Computations are performed on $\Gamma=\mathbb{S}^2$ with wavenumber $k = \pi$.}
\label{fig:h_convergence_k_1}
\end{figure}

While these results validate the convergence rates of Remark~\ref{rem:sphere_convergence} for the sphere, they do not validate the more general results of Theorem~\ref{thm:discretization_error}, a task we defer to the following section where non-spherical surfaces are considered.

\subsection{Nyström discretization of Helmholtz boundary integral equations}\label{sec:scattering}
In this section we consider two exterior Helmholtz boundary value problems: the sound-soft (Dirichlet) problem
\begin{subequations}\begin{equation}\label{eq:dir_prob}
\left\{\begin{split}\Delta u+k^2 u = 0\quad&\text{in}\quad\R^3\setminus\overline\Omega,\\
  u = f\quad&\text{on}\quad\Gamma,\\
  \lim_{|x|\to\infty} |x|\left\{\frac{\p u}{\p |x|}-ik u\right\}=0,
\end{split}\right.
\end{equation}
and the sound-hard (Neumann) problem
\begin{equation}\label{eq:neu_prob}
\left\{\begin{split}\Delta v+k^2 v = 0\quad&\text{in}\quad\R^3\setminus\overline\Omega,\\
  \dfrac{\p v}{\p \nu} = g\quad&\text{on}\quad\Gamma,\\
  \lim_{|x|\to\infty} |x|\left\{\frac{\p v}{\p |x|}-ik v\right\}=0.
\end{split}\right.
\end{equation}\end{subequations}
The boundary data $f,g:\Gamma\to\C$ are assumed to be smooth and $\Omega\subset\R^3$ is an open and bounded domain with smooth boundary $\Gamma=\partial\Omega$ and outward unit normal denoted by $\nu$. 

Both problems are solved via an indirect combined-field integral equation formulation~\cite{Kress:1995}, using the ansätze
\begin{subequations}\begin{align}
u(x) &= \int_{\Gamma}\left\{\frac{\p G(x,y)}{\p \nu(y)}-ikG(x,y)\right\}\varphi(y)\de s(y)\quad\text{and}\label{eq:rep_for_dir}\\
v(x) &= \int_{\Gamma}\left\{\frac{\p G(x,y)}{\p \nu(y)}\mathsf S[\psi](y)-ikG(x,y)\psi(y)\right\}\de s(y),\quad x\in\R^3\setminus\overline{\Omega},\label{eq:rep_for_neu}
\end{align}\label{eq:field_rep}\end{subequations}
where $G(x,y) = \frac{\e^{ik|x-y|}}{4\pi|x-y|}$, $x\neq y$, denotes the free-space Green's function for the Helmholtz equation.
Enforcing the respective boundary conditions and taking the appropriate boundary limits of~\eqref{eq:field_rep}, we obtain the following Fredholm second-kind boundary integral equations:
\begin{subequations}\begin{align}
\left(\frac{1}{2}\mathsf I + \mathsf K - ik\mathsf S\right)\varphi &= f,\label{eq:CFIE_soft}\\
\left(\frac{ik}{2}\mathsf I + \mathsf T\mathsf S - ik\mathsf K^\top\right)\psi &= g,\label{eq:CFIE_hard}
\end{align}\end{subequations}
for the unknown smooth surface densities $\varphi,\psi:\Gamma\to\C$. Both problems are uniquely solvable in $C^{r,\alpha}(\Gamma)$, $r\in\mathbb{N}_0$, $\alpha\in(0,1)$, for all $k>0$~\cite{Kress:1995}.

The Nyström discretization of the combined-field  BIEs~\eqref{eq:CFIE_soft} and~\eqref{eq:CFIE_hard} proceeds as follows. The surface~$\Gamma$ is discretized into curved triangular elements $\mathcal T_h=\{T\}$ and a composite quadrature rule of degree of exactness $\mathfrak{q}$ with nodes $\{x_j\}_{j=1}^{N_Q}$ and weights $\{w_j\}_{j=1}^{N_Q}$ is fixed on $\Gamma$, as described in Section~\ref{sec:numerics}. Each boundary integral operator $\mathsf{B} \in \{\mathsf{S}, \mathsf{K}, \mathsf{K}^\top, \mathsf{T}\}$ is replaced by its regularized counterpart $\mathsf{B}_\delta$, which features a smooth integrand and is therefore amenable to high-order numerical integration using the same quadrature rule. Collocating the resulting equations at every quadrature node $\{x_j\}_{j=1}^{N_Q}$ and approximating the surface integrals by the composite quadrature rule yields a dense linear system of the form
\begin{equation}
A\,\bold{c} = \bold{f},
\end{equation}
where $A\in\C^{N_Q\times N_Q}$ is the Nyström discretization matrix, $N_Q$ is the total number of quadrature nodes, $\bold{c} = (\varphi(x_1),\ldots,\varphi(x_{N_Q}))^\top$ or $(\psi(x_1),\ldots,\psi(x_{N_Q}))^\top$ is the vector of unknown density values at the quadrature nodes, and $\bold{f}$ collects the right-hand side evaluations at those points. 

As a first test, we place a point source at $x_0\in\Omega$ and solve both problems~\eqref{eq:dir_prob} and~\eqref{eq:neu_prob} with boundary data
$$
f(x) = G(x,x_0) \qquad\text{and}\qquad g(x) = \nabla_x G(x,x_0)\cdot\nu(x),\qquad x\in\Gamma,
$$
respectively, for which the exact solution is $u(x)=v(x) = G(x,x_0)$ in $\R^3\setminus\overline{\Omega}$. The accuracy is assessed via the relative far-field error
\begin{equation}\label{eq:far_field_error}
e_{\rm ff}(h) = \frac{\max_{j\in\{1,\ldots,100\}} |w(z_j)-\widetilde{w}_h(z_j)|}{\max_{j\in\{1,\ldots,100\}}|w(z_j)|},\qquad w\in\{u,v\},
\end{equation}
where $\{z_j\}_{j=1}^{100}$ are target points distributed approximately uniformly on a sphere of radius $10$ enclosing~$\Gamma$, and $\widetilde{u}_h$, $\widetilde{v}_h$ denote the numerical approximations obtained by Nyström discretization and quadrature evaluation of the representation formulae~\eqref{eq:rep_for_dir} and~\eqref{eq:rep_for_neu}, respectively.

Figure~\ref{fig:h_convergence_torus_k_1} displays the far-field errors~\eqref{eq:far_field_error} for CFIE-D~\eqref{eq:CFIE_soft} and CFIE-N~\eqref{eq:CFIE_hard}, discretized via the Nystr\"om method, as a function of mesh size $h$ for the torus (major radius $1$, minor radius $1/2$) and the bean-shaped surface from~\cite{Bruno:2001ima}, at wavenumber $k=\pi$. The incident point source is located at $x_0=(1,1,0)$ for the torus and $x_0=(0.1,0.1,0.1)$ for the bean. Three parameter combinations $(\mathfrak{m},\mathfrak{q})\in\{(3,2),(5,4),(7,5)\}$ are considered. The regularization parameter is $\delta = c_0h^{\mu^\star}$, with $\mu^\star$ as in~\eqref{eq:optimal_mu} and $c_0>0$ a fixed constant; the resulting predicted convergence rates $\mathfrak{o}^\star$ of Theorem~\ref{thm:discretization_error} are indicated by dashed black lines, corresponding to the double-layer operator for CFIE-D and the hypersingular operator for CFIE-N, as these govern the slowest convergence rate within each formulation. All linear systems are solved via GMRES~\cite{saad1986gmres} with relative tolerance $10^{-8}$, accelerated by $\mathcal{H}$-matrix compression~\cite{Hackbusch:2015} via the \texttt{HMatrices.jl} Julia package~\cite{HMatrices_jl}, requiring a nearly constant iteration count of 17--20 (torus) and 16--17 (bean) across all examples (surface diameter $\approx$ one wavelength). The results confirm the sharpness of the estimates in Section~\ref{sec:discretization_error} for non-spherical surfaces and demonstrate that the regularization does not adversely affect GMRES convergence.

\begin{figure}[htbp]
  \centering
  \includegraphics[scale=0.6]{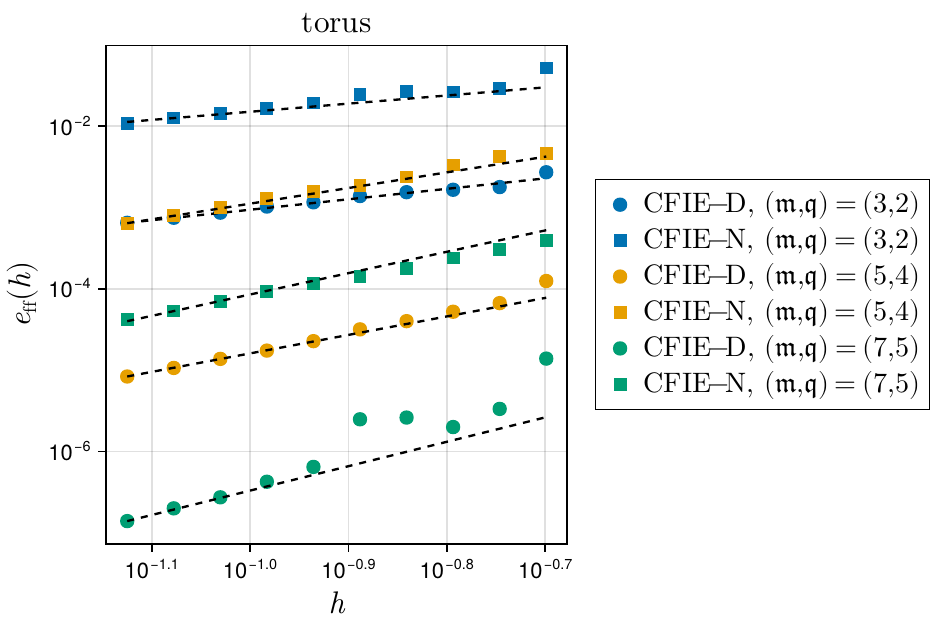} \qquad
  \includegraphics[scale=0.6]{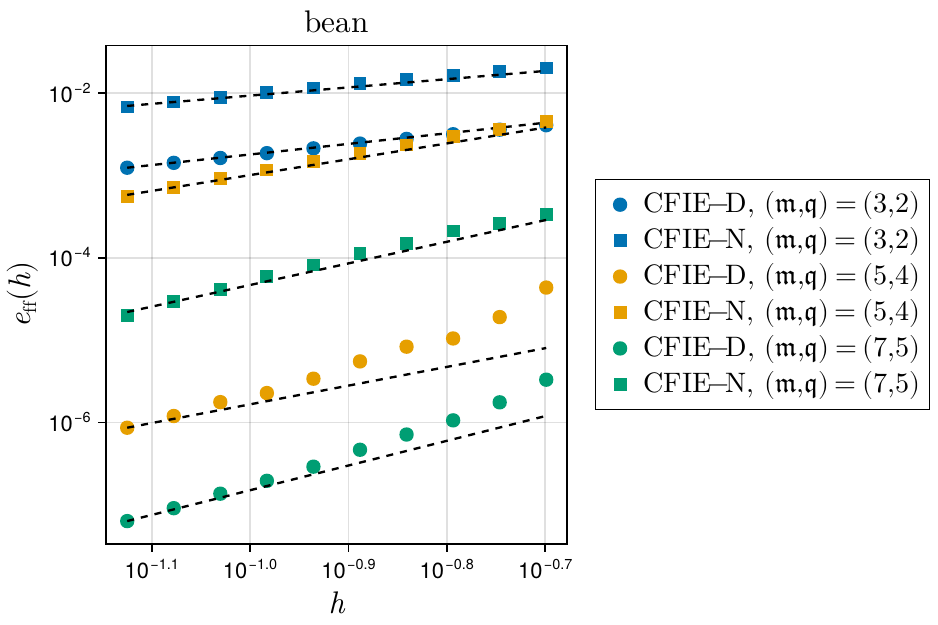} 
 
   \caption{Far-field errors $e_{\mathrm{ff}}$\eqref{eq:far_field_error} as a function of mesh size $h$ for CFIE-D\eqref{eq:CFIE_soft} and CFIE-N~\eqref{eq:CFIE_hard}, applied to an interior point-source problem on the torus (left) and bean-shaped (right) geometries at wavenumber $k=\pi$, for parameter combinations $(\mathfrak{m},\mathfrak{q})\in\{(3,2),(5,4),(7,5)\}$. Dashed black lines indicate the predicted convergence rates $\mathfrak{o}^\star$ of Theorem~\ref{thm:discretization_error}, governed by the double-layer operator (CFIE-D) and the hypersingular operator (CFIE-N).}
\label{fig:h_convergence_torus_k_1}
\end{figure}

Finally, to further illustrate the method's performance at higher frequencies, Figure~\ref{fig:scattering} displays the real part of the total fields $u^{\rm tot} = u + u^{\rm inc}$ and $v^{\rm tot} = v + u^{\rm inc}$ resulting from the scattering of a plane wave $u^{\rm inc}(x) = \e^{ik x\cdot d}$ ($|d|=1$) by the torus and bean geometries at wavenumber $k = 5\pi$ (surface diameter $\approx$ ten wavelengths for the torus and five wavelengths for the bean). Both sound-soft~\eqref{eq:dir_prob} and sound-hard~\eqref{eq:neu_prob} problems are solved with data $f(x) = -u^{\rm inc}(x)$ and $g(x) = -\nabla u^{\rm inc}(x)\cdot\nu(x)$ for $x\in\Gamma$, respectively, via CFIE-D~\eqref{eq:CFIE_soft} and CFIE-N~\eqref{eq:CFIE_hard} with parameters $(\mathfrak{m},\mathfrak{q}) = (5,5)$. Surface discretizations with mesh size $h=0.1=\lambda/4$ are used, yielding $N_Q =48460$ and $N_Q = 31680$ quadrature points for the torus and bean respectively. GMRES iteration counts of 29 and 24 were needed for convergence of CFIE-D, and of 211 and 150 for CFIE-N, with a relative tolerance of $10^{-8}$, for the torus and bean respectively; the corresponding linear system solve times were 24 and 16 seconds for CFIE-D, and 1094 and 195 seconds for CFIE-N (all on a single core of an Apple M3 Max processor with 36~GB of RAM). To estimate the error, we solve the same problems with the point source from the example above and obtain far-field relative errors of $2\cdot 10^{-3}$ and $3.8\cdot 10^{-4}$ for the torus and bean in the case of CFIE-D, and of $1.2\cdot10^{-3}$ and $4\cdot 10^{-4}$ in the case of CFIE-N. These results confirm that the regularization yields accurate solutions on modestly refined geometries without degrading solver efficiency, even at higher frequencies.

\begin{figure}[htbp]
  \centering
  \includegraphics[width=0.4\linewidth]{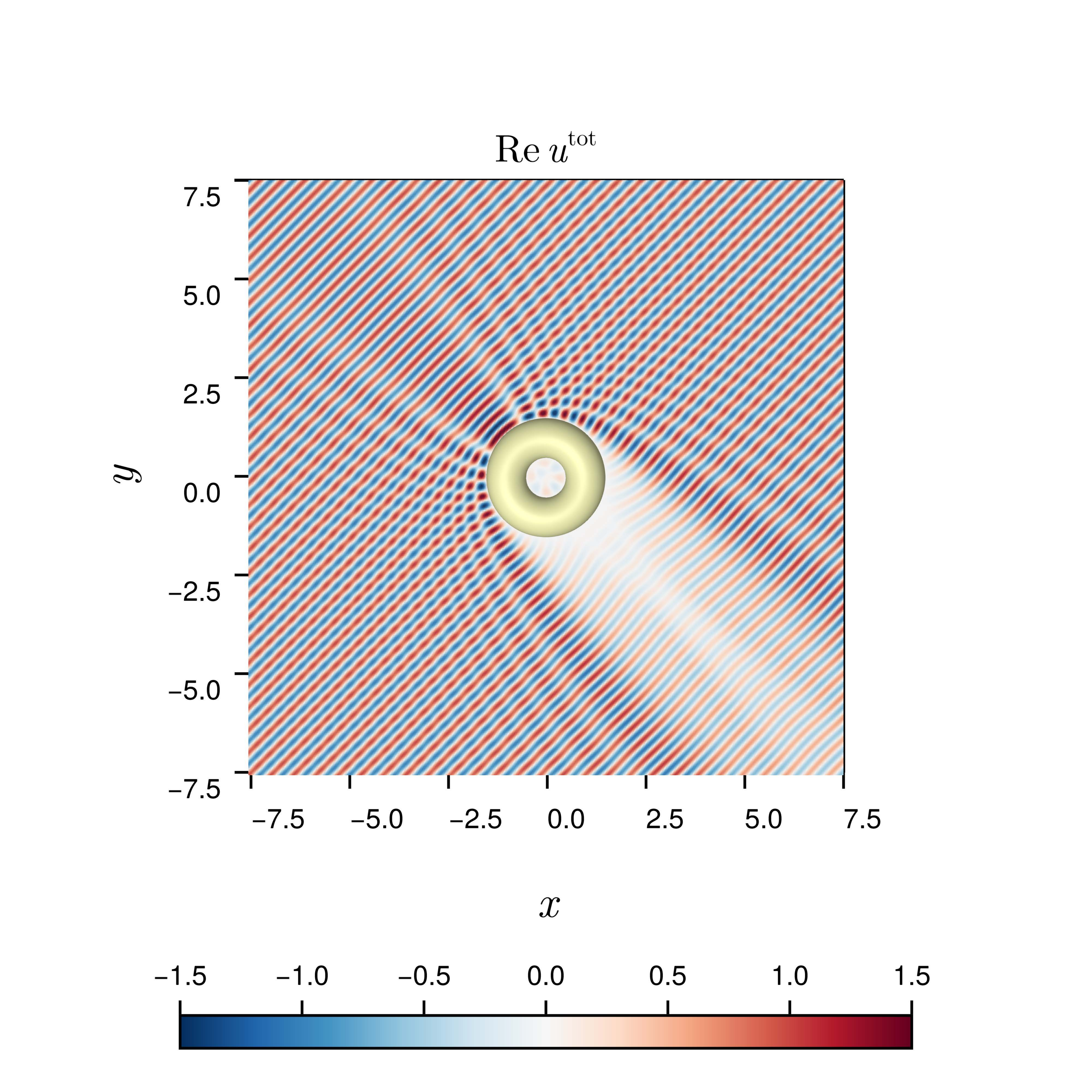} \qquad \includegraphics[width=0.4\linewidth]{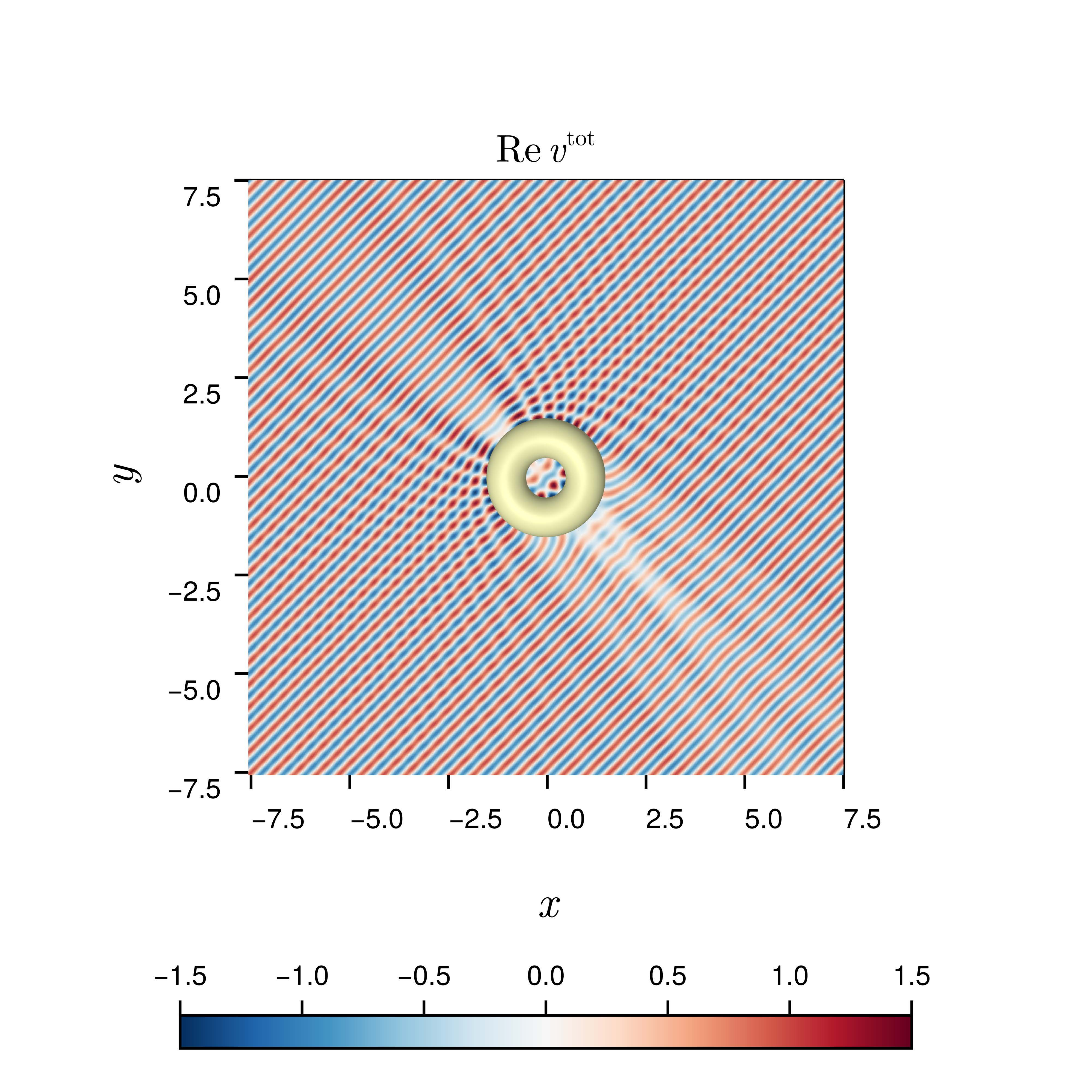}\bigskip\\
    \includegraphics[width=0.4\linewidth]{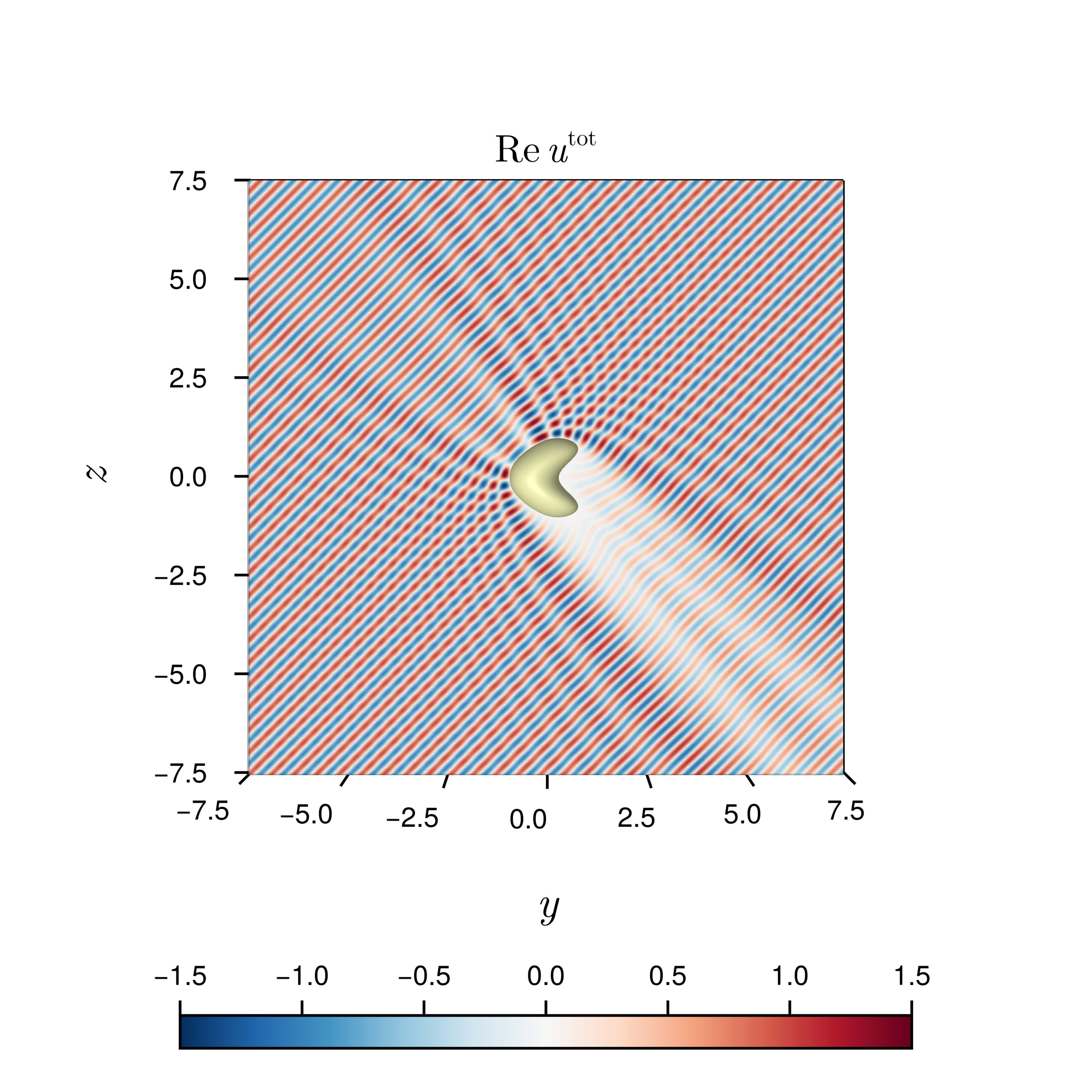}\qquad \includegraphics[width=0.4\linewidth]{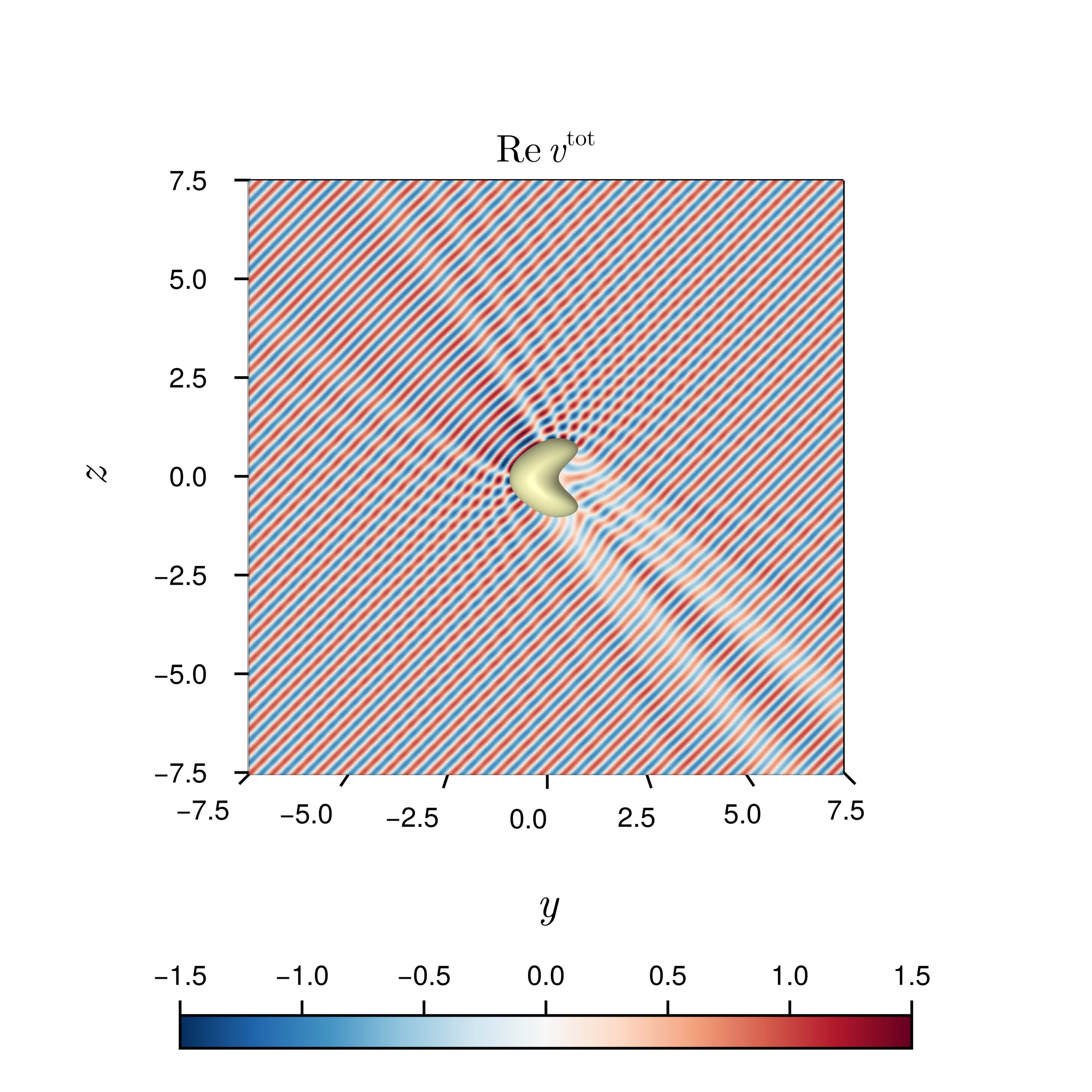}\\
  \caption{Real part of the total fields $u^{\rm tot}=u+u^{\inc}$ and $v^{\rm tot}=v+u^{\inc}$ for plane-wave scattering $u^{\inc}(x)=\e^{ik x\cdot d}$ at wavenumber $k=5\pi$, computed via the Nystr\"om method with regularization order $\mathfrak{m}=5$ and quadrature degree $\mathfrak{q}=5$. Top row: torus (major radius $1$, minor radius $1/2$); bottom row: bean-shaped surface~\cite{Bruno:2001ima}. Left column: sound-soft (Dirichlet) problem~\eqref{eq:dir_prob}; right column: sound-hard (Neumann) problem~\eqref{eq:neu_prob}.}
\label{fig:scattering}
\end{figure}
\section{Conclusions and future work}\label{sec:conclusion}

This paper has established that the kernel regularization approach extends cleanly to the full Helmholtz Calder\'{o}n calculus, including the hypersingular operator, within a unified theoretical and computational framework. The numerical experiments suggest that the method remains robust at higher frequencies and that $\mathcal{H}$-matrix acceleration effectively compensates for the incompatibility of the modified kernel with FMM-based solvers, though a more systematic study of the interplay between the regularization order $\mathfrak{m}$, the wavenumber $k$, and the $\mathcal{H}$-matrix compression tolerance is still lacking and would be valuable in practice.

Several natural directions for future work present themselves. The most immediate is the extension to the \emph{two-dimensional Helmholtz} equation, where the boundary integral operators involve logarithmic and algebraic singularities; the general regularization strategy should carry over, but the moment conditions and regularizing functions must be rederived. A second direction is the generalization to the \emph{time-harmonic Maxwell equations} in three dimensions, whose vectorial boundary integral operators---the electric- and magnetic-field integral operators---share the same Helmholtz singularity structure treated here but involve additional geometric factors tied to the surface tangent current densities. Closely related is the problem of \emph{nearly singular evaluation} of Helmholtz and Maxwell layer potentials at target points close to the surface, where standard quadrature loses accuracy due to sharply peaked integrands. The regularizing function suggests a natural extension to this regime by centering the regularization at the nearest point on the surface and coupling the regularization parameter to the target-to-surface distance, potentially yielding a unified treatment of both on-surface and near-surface evaluations. This would be directly relevant to scattering by geometrically close surfaces and to the accurate computation of near-field quantities such as electromagnetic field intensities near scatterers. Finally, adapting the framework to \emph{volume potentials} arising in inhomogeneous problems or Lippmann-Schwinger-type volume integral equation formulations represents a further promising avenue, as the Newton potential and related volumetric operators share the same singularity structure amenable to error-function-based mollification.
\appendix

\section{Recurrence relations for integrals}\label{sec:coeff}
In this section, we derive recurrence relations for the evaluation of the coefficients involved in the linear systems~\eqref{eq:system_SL_coeff}, \eqref{eq:system_K}, and \eqref{eq:system_for_hyper}, which are used in the construction of the regularizing functions of the Helmholtz boundary integral operators considered in this work. 

\begin{proposition} \label{prop:prop_1}
The functions  $S_{j}(\varkappa)$ and $C_{j}(\varkappa)$,  defined by 
\begin{subequations}\label{eq:aux_funcs}\begin{align}
 S_{j}(\varkappa):=&~\frac{2}{\sqrt\pi}\int_{0}^\infty \sin(\varkappa t)\e^{-t^2}t^{2j}\de t\quad\text{and}\label{eq:def_coeff_C}\\
 C_{j}(\varkappa):=&~\frac{2}{\sqrt\pi}\int_{0}^\infty \cos(\varkappa t)\e^{-t^2}t^{2j+1}\de t
\quad\text{for } \varkappa\in\R\text{ and }  j\in\N_0,\label{eq:def_coeff_S}
\end{align}\end{subequations}
 satisfy the following recurrence relations:
\begin{subequations}\begin{align}
S_{j}(\varkappa) =&~\left(j-\frac12\right)S_{j-1}(\varkappa)+\frac{\varkappa}{2} C_{j-1}(\varkappa)\quad\text{and}\label{eq:rec_rel_S}\\
C_{j}(\varkappa) =&~jC_{j-1}(\varkappa)-\frac{\varkappa}{2} S_{j}(\varkappa)\qquad\qquad\qquad\text{for}\quad j\in\N, \label{eq:rec_rel_C}
\end{align}\end{subequations}
with initial conditions
 \begin{align}\label{eq:init_conds_aux_funcs}
C_0(\varkappa) = \frac{1-\varkappa F\left(\frac{\varkappa}{2}\right)}{\sqrt{\pi }}\qquad\text{and}\qquad
 S_{0}(\varkappa)=\frac{2}{\sqrt\pi}F\left(\frac{\varkappa}{2}\right),\end{align}
where
\begin{equation*}\label{eq:Dawson}
F(x):=\e^{-x^2} \int_0^x \e^{t^2} \de t,\qquad x\in\R,
\end{equation*}
denotes the Dawson function~\cite{abramowitz1965handbook}. 
Moreover, at $\varkappa=0$, we have
\begin{equation}\label{eq:exact_coeff_0}
C_{j}(0) 
=  \frac{j!}{\sqrt{\pi}} 
\quad\text{for}\quad j\in\N_0.
\end{equation}

\end{proposition}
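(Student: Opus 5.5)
The plan is to obtain both recurrences by integration by parts, using $\frac{\de}{\de t}\e^{-t^2}=-2t\e^{-t^2}$ to move one power of $t$ onto the exponential. All boundary terms vanish because of the Gaussian decay at infinity and the positive powers of $t$ at the origin.

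For \eqref{eq:rec_rel_S}, with $j\in\N$, I write $\sin(\varkappa t)t^{2j}\e^{-t^2}=-\tfrac12\sin(\varkappa t)t^{2j-1}\,\frac{\de}{\de t}\e^{-t^2}$ and integrate by parts. This gives
$$
S_j=\frac{1}{\sqrt\pi}\int_0^\infty\e^{-t^2}\left[(2j-1)t^{2j-2}\sin(\varkappa t)+\varkappa t^{2j-1}\cos(\varkappa t)\right]\de t .
$$
The first term is $(j-\tfrac12)S_{j-1}$. The second term is $\tfrac{\varkappa}{2}C_{j-1}$, since $C_{j-1}$ involves $t^{2(j-1)+1}=t^{2j-1}$. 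When $j=1$ the boundary term $\sin(\varkappa t)t$ still vanishes at $t=0$.

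For \eqref{eq:rec_rel_C}, I similarly write $\cos(\varkappa t)t^{2j+1}\e^{-t^2}=-\tfrac12\cos(\varkappa t)t^{2j}\,\frac{\de}{\de t}\e^{-t^2}$. The boundary term $t^{2j}\cos(\varkappa t)$ vanishes at $0$ for $j\ge1$, and integration by parts yields
$$
C_j=\frac{1}{\sqrt\pi}\int_0^\infty\e^{-t^2}\left[2j\,t^{2j-1}\cos(\varkappa t)-\varkappa t^{2j}\sin(\varkappa t)\right]\de t=jC_{j-1}-\frac{\varkappa}{2}S_j .
$$
These are exactly the stated relations.

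For the initial conditions, $S_0$ is a classical integral. I set $G(\varkappa)=\int_0^\infty\e^{-t^2}\sin(\varkappa t)\de t$ and differentiate under the integral sign. One more integration by parts gives $G'(\varkappa)=\tfrac12-\tfrac{\varkappa}{2}G(\varkappa)$ with $G(0)=0$. The unique solution is $G(\varkappa)=\e^{-\varkappa^2/4}\int_0^{\varkappa/2}\e^{s^2}\de s=F(\varkappa/2)$, obtained after the substitution $u=2s$ in $\tfrac12\int_0^\varkappa \e^{(u^2-\varkappa^2)/4}\de u$. Hence $S_0=\tfrac{2}{\sqrt\pi}F(\varkappa/2)$.

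For $C_0$, I integrate by parts directly, without a boundary correction at $0$:
$$
C_0=\frac{2}{\sqrt\pi}\int_0^\infty\cos(\varkappa t)\,t\e^{-t^2}\de t=\frac{1}{\sqrt\pi}\left[1-\varkappa\int_0^\infty\sin(\varkappa t)\e^{-t^2}\de t\right]=\frac{1-\varkappa F(\varkappa/2)}{\sqrt\pi}.
$$

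Finally, setting $\varkappa=0$ in \eqref{eq:rec_rel_C} gives $C_j(0)=jC_{j-1}(0)$, and $C_0(0)=1/\sqrt\pi$. Induction then yields $C_j(0)=j!/\sqrt\pi$; this can also be checked directly via $\int_0^\infty t^{2j+1}\e^{-t^2}\de t=j!/2$.

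The only step that needs care is identifying the Dawson function in $S_0$, specifically solving the ODE and matching normalizations. Everything else is routine bookkeeping of the boundary terms.
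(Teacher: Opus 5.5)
Your proof is correct and is essentially the paper's argument. The paper integrates $t^\ell\e^{-t^2}$ by parts against $\e^{i\varkappa t}$ and takes the real part with $\ell=2j$ and the imaginary part with $\ell=2j-1$; this is the same identity $\int_0^\infty\frac{\de}{\de t}\bigl(t^\ell\e^{-t^2}\e^{i\varkappa t}\bigr)\de t=0$ that your sine/cosine integrations by parts encode, and your ODE derivation of $S_0$ and $C_0$ fills in what the paper calls direct evaluation. (Minor wording point: in the $C_0$ step there is a boundary term at $t=0$, namely $\bigl[-\tfrac12\e^{-t^2}\cos(\varkappa t)\bigr]_0^\infty=\tfrac12$, and it is exactly the source of the $1$ in your otherwise correct formula.)
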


\begin{proof} 

The initial conditions~\eqref{eq:init_conds_aux_funcs} follow directly from the evaluation of the integrals in~\eqref{eq:aux_funcs} corresponding to $j=0$. 
Let $\ell\geq 1$ and $f_\ell(t):=t^{\ell}\e^{-t^2}$. 

By integrating by parts, 
noting that the boundary terms vanish for $\ell\ge 1$, we arrive at
\begin{equation}\label{eq:prop_1}
i\varkappa\int_0^\infty f_\ell(t) \e^{i\varkappa t}\de t = -\int_0^\infty f_\ell'(t)\e^{i\varkappa t}\de t 
=~-\int_0^\infty \e^{-t^2}\left\{\ell t^{\ell-1}-2t^{\ell+1}\right\}\e^{i\varkappa t}\de t.
\end{equation}
Taking real part in~\eqref{eq:prop_1}, setting $\ell = 2j$, and using the definitions~\eqref{eq:aux_funcs},  we obtain 
$$
-\varkappa S_{j}(\varkappa) = -2jC_{j-1}(\varkappa)+2C_{j}(\varkappa),
$$
from where the recurrence relation~\eqref{eq:rec_rel_C} follows.
Similarly, taking imaginary part in~\eqref{eq:prop_1} and setting $\ell = 2j-1$ we arrive at 
$$
\varkappa C_{j-1}(\varkappa) = -(2j-1)S_{j-1}(\varkappa)+2S_{j}(\varkappa),
$$
from where the recurrence relation~\eqref{eq:rec_rel_S} follows.

Finally, formula~\eqref{eq:exact_coeff_0} follows from setting $\varkappa=0$ in~\eqref{eq:rec_rel_C} and applying the initial condition~\eqref{eq:init_conds_aux_funcs} which yields $C_0(0) = 1/\sqrt{\pi}$. 
\end{proof}

\begin{proposition}\label{prop:prop_2}
The functions   $\widetilde C_{j}(\varkappa)$ and $\widetilde S_{j}(\varkappa)$,  defined by
\begin{subequations}\label{eq:def_tilde}\begin{align}
\widetilde C_{j}(\varkappa):=&~\int_{0}^\infty \cos(\varkappa t)\erfc(t)t^{2j}\de t\quad\text{and}\label{eq:def_coeff_C_erf}\\
\widetilde  S_{j}(\varkappa):=&~\int_{0}^\infty \sin(\varkappa t)\erfc(t)t^{2j+1}\de t\quad\text{for }\varkappa\in\R\text{ and }j\in\N_0,\label{eq:def_coeff_S_erf}
 \end{align}\end{subequations}
 satisfy  the following recurrence relations:
\begin{subequations}\label{eq:rec_tilde}\begin{align}
\widetilde C_{j}(\varkappa) =&~~\frac{S_{j}(\varkappa)-2j \widetilde S_{j-1}(\varkappa)}{\varkappa} \quad\text{and}\quad \label{eq:rec_rel_c_erf}\\
\widetilde S_{j}(\varkappa) =&~~\frac{(2j+1) \widetilde C_{j}(\varkappa)-C_{j}(\varkappa)}{\varkappa}\quad\text{for}\quad \varkappa\in\R\setminus\{0\}\text{ and } j\in\N,\label{eq:rec_rel_s_erf}
\end{align}\end{subequations}
with the initial conditions
 \begin{align}\label{eq:init_cond_tilde}
\widetilde C_{0}(\varkappa)=\frac{2}{\sqrt\pi}\frac{F\left(\frac{\varkappa}{2}\right)}{ \varkappa}\qquad\text{and}\qquad
\widetilde S_{0}(\varkappa)=\frac{\left(\varkappa^2+2\right) F\left(\frac{\varkappa}{2}\right)-\varkappa}{\sqrt{\pi } \varkappa^2},
\end{align}
 where $\{C_{j}(\varkappa)\}_{j\in\N_0}$ and~$\{S_{j}(\varkappa)\}_{j\in\N_0}$ are defined in~\eqref{eq:def_coeff_C} and~\eqref{eq:def_coeff_S}, respectively. Moreover, at $\varkappa=0$, one has 
\begin{equation}\label{eq:zero_erf}
 \widetilde C_{j}(0) = \frac{1}{\sqrt\pi}\frac{j!}{2j+1},\qquad j\in\N_0.
\end{equation}
\end{proposition}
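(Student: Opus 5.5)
The plan is to mirror the proof of Proposition~\ref{prop:prop_1}: integrate by parts against $\e^{i\varkappa t}$, now with $\erfc$ in place of the Gaussian. The identity that ties the two propositions together is $\frac{\de}{\de t}\erfc(t)=-\frac{2}{\sqrt\pi}\e^{-t^2}$. For $\ell\ge 1$ set $g_\ell(t):=t^\ell\erfc(t)$. Since $g_\ell(0)=0$ and $\erfc$ decays like a Gaussian, integrating by parts gives
\begin{equation*}
i\varkappa\int_0^\infty g_\ell(t)\e^{i\varkappa t}\de t=-\int_0^\infty\Big\{\ell t^{\ell-1}\erfc(t)-\tfrac{2}{\sqrt\pi}t^{\ell}\e^{-t^2}\Big\}\e^{i\varkappa t}\de t ,
\end{equation*}
with no boundary terms.

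To get \eqref{eq:rec_rel_c_erf}, take the imaginary part with $\ell=2j$. The left side gives $\varkappa\widetilde C_j$. On the right, the term $\ell t^{\ell-1}\erfc(t)\sin(\varkappa t)$ gives $-2j\widetilde S_{j-1}$, and the Gaussian term gives $+S_j$ by definition~\eqref{eq:def_coeff_C}. Dividing by $\varkappa\neq 0$ yields the recurrence. To get \eqref{eq:rec_rel_s_erf}, take the real part with $\ell=2j+1$. This gives $-\varkappa\widetilde S_j=-(2j+1)\widetilde C_j+C_j$, and rearranging finishes it. These relations hold for all $j\ge1$, and the second one also holds for $j=0$.

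For the initial conditions, I would first compute $\widetilde C_0$ directly. Integrating by parts with $\frac{\de}{\de t}\sin(\varkappa t)/\varkappa$ and using $\erfc(0)=1$ and $\sin 0=0$ gives $\widetilde C_0=\frac{1}{\varkappa}\cdot\frac{2}{\sqrt\pi}\int_0^\infty \sin(\varkappa t)\e^{-t^2}\de t=S_0/\varkappa$. Proposition~\ref{prop:prop_1} then gives $S_0=\frac{2}{\sqrt\pi}F(\varkappa/2)$, which is the stated formula. Next, $\widetilde S_0$ follows from the $j=0$ case of the second identity, $\widetilde S_0=(\widetilde C_0-C_0)/\varkappa$. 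Substituting $C_0=(1-\varkappa F(\varkappa/2))/\sqrt\pi$ gives $\big((\varkappa^2+2)F(\varkappa/2)-\varkappa\big)/(\sqrt\pi\varkappa^2)$, as claimed.

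For \eqref{eq:zero_erf}, the recurrences divide by $\varkappa$ and cannot be used at $\varkappa=0$. Instead, evaluate directly: at $\varkappa=0$, integrating by parts once gives $\widetilde C_j(0)=\int_0^\infty\erfc(t)t^{2j}\de t=\frac{1}{2j+1}\cdot\frac{2}{\sqrt\pi}\int_0^\infty t^{2j+1}\e^{-t^2}\de t=\frac{C_j(0)}{2j+1}$. Then \eqref{eq:exact_coeff_0} gives $\frac{j!}{\sqrt\pi(2j+1)}$. Equivalently, one can take the limit of \eqref{eq:rec_rel_s_erf} as $\varkappa\to0$, where the numerator must vanish. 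The only delicate point is checking that the boundary terms vanish and keeping the signs straight between the real and imaginary parts. The closed form for the initial data rests on the Dawson-function expression for $S_0$, which is taken from Proposition~\ref{prop:prop_1}.
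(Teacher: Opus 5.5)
Your proposal is correct and follows essentially the same route as the paper's proof. You integrate by parts against $\e^{i\varkappa t}$ using $\frac{\de}{\de t}\erfc(t)=-\frac{2}{\sqrt\pi}\e^{-t^2}$, take the imaginary part with $\ell=2j$ and the real part with $\ell=2j+1$, and obtain $\widetilde C_j(0)=C_j(0)/(2j+1)$, which combined with \eqref{eq:exact_coeff_0} gives \eqref{eq:zero_erf}. The only minor difference is that you derive the initial conditions explicitly from $S_0$, $C_0$ of Proposition~\ref{prop:prop_1} and the $j=0$ case of the second identity, whereas the paper just appeals to direct evaluation of the $j=0$ integrals.
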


\begin{proof} We proceed as in the proof of the proposition above. The initial conditions~\eqref{eq:init_cond_tilde} are obtained by direct evaluation of the improper integrals~\eqref{eq:def_tilde} at $j = 0$.  Let $\ell\geq 1$ and $f_\ell(t) := \erfc(t)t^{\ell}$. Integrating by parts and using the fact that for $\ell\geq 1$ the boundary terms vanish, we readily get
\begin{equation}\label{eq:interm_prop_2}\begin{split}
i\varkappa\int_0^\infty f_\ell(t)\e^{i\varkappa t}\de t =&-\int_{0}^\infty f'_\ell(t)\e^{i\varkappa t}\de t\\
=&-\ell\int_{0}^\infty \erfc(t)t^{\ell-1}\e^{i\varkappa t}\de t+\frac{2}{\sqrt{\pi}}\int_{0}^\infty \e^{-t^2}t^\ell\e^{i\varkappa t}\de t.
\end{split}\end{equation}
Taking the imaginary part of the identity~\eqref{eq:interm_prop_2}, setting $\ell=2j$, and applying the definitions~\eqref{eq:def_tilde} and~\eqref{eq:aux_funcs}, we obtain
$$
\varkappa \widetilde C_{j}(\varkappa) = -2j\widetilde S_{j-1}(\varkappa) +S_{j}(\varkappa),
$$
from which the recurrence relation~\eqref{eq:rec_rel_c_erf} follows.
Similarly, taking the real part of the identity~\eqref{eq:interm_prop_2} and setting $\ell =2j+1$, we obtain
$$
-\varkappa \widetilde S_{j}(\varkappa) = -(2j+1)\widetilde C_{j}(\varkappa) + C_{j}(\varkappa),
$$
from where both, the recurrence~\eqref{eq:rec_rel_s_erf} and 
$$
 \widetilde C_{j}(0) = \frac{C_{j}(0)}{2j+1},\qquad j\in\N_0,
 $$
 follow.
 Finally,~\eqref{eq:zero_erf} is obtained by substituting~\eqref{eq:exact_coeff_0} in the identity above.
\end{proof}

\begin{remark} The recurrence relations~\eqref{eq:rec_tilde} exhibit a clear numerical instability for small values of $\varkappa$. To address this issue, one can, of course, directly evaluate the integrals numerically. Yet an alternative approach is to employ the following Taylor expansions 
\begin{align}
    \widetilde{C}_j(\varkappa) &= \sum_{\ell=0}^\infty \frac{(-1)^\ell \widetilde{C}_{\ell+j}(0)}{(2\ell)!} \varkappa^{2\ell} 
    =\frac{1}{\sqrt\pi} \sum_{\ell=0}^\infty \frac{(-1)^\ell}{2(\ell+j)+1} \frac{(\ell+j)!}{(2\ell)!} \varkappa^{2\ell} , \label{eq:C_expansion} \\
    \widetilde{S}_j(\varkappa) &= \sum_{\ell=0}^\infty \frac{(-1)^\ell \widetilde{C}_{\ell+j+1}(0)}{(2\ell+1)!} \varkappa^{2\ell+1} 
    =\frac{1}{\sqrt\pi}  \sum_{\ell=0}^\infty \frac{(-1)^\ell}{2(\ell+j)+3} \frac{(\ell+j+1)!}{(2\ell+1)!} \varkappa^{2\ell+1},\quad j\in\N_0, \label{eq:S_expansion}
\end{align}
which can be used safely for $\varkappa \in (0, 1)$. Indeed, to maintain machine precision (relative error $\approx 10^{-16}$) in this regime, the series should be truncated at approximately $25$ terms. This provides a sufficient safety margin even for moderately large values of~$j$.  

Moreover, while not strictly necessary in practice, a similar approach can be used to numerically evaluate the functions defined in \eqref{eq:aux_funcs}. Specifically, the following series expansions can be employed:
\begin{align}
C_j(\varkappa) &= \sum_{\ell=0}^\infty\frac{(-1)^\ell C_{\ell+j}(0)}{(2\ell)!} \varkappa^{2\ell} = \frac{1}{\sqrt\pi}\sum_{\ell=0}^\infty\frac{(-1)^\ell (\ell+j)!}{(2\ell)!} \varkappa^{2\ell}, \\
S_j(\varkappa) &= \sum_{\ell=0}^\infty\frac{(-1)^\ell C_{\ell+j}(0)}{(2\ell+1)!} \varkappa^{2\ell+1} = \frac{1}{\sqrt\pi}\sum_{\ell=0}^\infty\frac{(-1)^\ell (\ell+j)!}{(2\ell+1)!} \varkappa^{2\ell+1}, \quad j\in\mathbb{N}_0.
\end{align}
As before, approximately 25 terms are sufficient to achieve near machine-precision accuracy.
\end{remark}

To compute the coefficients of the regularizing function $\sigma_2^{(H)}$ associated with the hypersingular operator, we must evaluate a few singular integrals that cannot be directly expressed in terms of the functions introduced in Propositions~\ref{prop:prop_1} and~\ref{prop:prop_2}. The following result defines these missing terms and provides representations of the Hadamard finite-part integrals appearing in their definitions as improper integrals suitable for numerical evaluation. Unfortunately, to the best of our knowledge, these integrals admit closed-form expressions only in terms of derivatives of confluent hypergeometric functions, which are rather cumbersome for practical use.
\begin{proposition}\label{prop:prop_3} Let
\begin{subequations}\begin{align}
\widetilde C_{-1}(\varkappa) := {\rm p.f.}\!\int_{0}^\infty \frac{\cos(\varkappa t)}{t^2}\erfc(t)\de t,\qquad C_{-1}(\varkappa):=\frac{2}{\sqrt{\pi}}{\rm p.f.\!}\int_0^\infty \frac{\cos(\varkappa t)}{t}\e^{-t^2}\de t,
\end{align}
and
\begin{align} \widetilde S_{-1}(\varkappa) :=&\int_0^\infty \frac{\sin(\varkappa t)}{t}\erfc(t)\de t.\end{align}\end{subequations}
Then, the following identities hold:
\begin{subequations}\begin{align}
\widetilde C_{-1}(\varkappa) =& -\frac{2}{\sqrt{\pi}}-\varkappa \widetilde S_{-1}(\varkappa) -C_{-1}(\varkappa),\label{eq:rec_neg_1}\\
C_{-1}(\varkappa) =&\frac{2}{\sqrt\pi} \int_0^\infty \left\{\varkappa\sin(\varkappa t)+2t\cos(\varkappa t)\right\}\e^{-t^2}\ln(t)\de t .\label{eq:rec_neg_2}
\end{align}\end{subequations}
Moreover, in the special case $\varkappa=0$, we have 
\begin{equation}\label{eq:sing_case}
\widetilde C_{-1} (0) =\frac{\gamma-2 }{\sqrt{\pi}} \qquad\text{and}\qquad C_{-1}(0) = -\frac{\gamma }{\sqrt{\pi}},
\end{equation}
where $\gamma\approx 0.57722$ is Euler's gamma constant~\cite{abramowitz1965handbook}.

\end{proposition}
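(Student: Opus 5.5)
The plan is to work directly from the definition of the Hadamard finite part at the endpoint $t=0$. For a function $g$ smooth on $[0,\infty)$ and decaying rapidly at infinity, I will use the standard convention
$$
{\rm p.f.}\!\int_0^\infty \frac{g(t)}{t}\de t=\lim_{\varepsilon\to0^+}\left\{\int_\varepsilon^\infty\frac{g(t)}{t}\de t+g(0)\ln\varepsilon\right\},
$$
$$
{\rm p.f.}\!\int_0^\infty \frac{g(t)}{t^2}\de t=\lim_{\varepsilon\to0^+}\left\{\int_\varepsilon^\infty\frac{g(t)}{t^2}\de t-\frac{g(0)}{\varepsilon}+g'(0)\ln\varepsilon\right\}.
$$
Fixing this convention consistently for both integrals is the only delicate point. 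The identity~\eqref{eq:rec_neg_1} relates two finite parts, so the logarithmic counterterms must cancel exactly. I expect that bookkeeping to be the main obstacle; everything else is one integration by parts per identity.

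For~\eqref{eq:rec_neg_1}, set $g(t)=\cos(\varkappa t)\erfc(t)$ and integrate by parts on $[\varepsilon,\infty)$:
$$
\int_\varepsilon^\infty \frac{g}{t^2}\de t=\frac{g(\varepsilon)}{\varepsilon}+\int_\varepsilon^\infty\frac{g'(t)}{t}\de t,
\qquad
g'(t)=-\varkappa\sin(\varkappa t)\erfc(t)-\frac{2}{\sqrt\pi}\cos(\varkappa t)\e^{-t^2}.
$$
Since $g(\varepsilon)=1-\frac{2}{\sqrt\pi}\varepsilon+\mathcal O(\varepsilon^2)$, the boundary term equals $1/\varepsilon-2/\sqrt\pi+o(1)$, and $-g(0)/\varepsilon=-1/\varepsilon$ removes its divergent part. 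In the remaining integral, the piece $-\varkappa\int_\varepsilon^\infty \sin(\varkappa t)\erfc(t)t^{-1}\de t$ converges to $-\varkappa\widetilde S_{-1}(\varkappa)$. The other piece is $-\frac{2}{\sqrt\pi}\int_\varepsilon^\infty \cos(\varkappa t)\e^{-t^2}t^{-1}\de t$. Since $g'(0)=-2/\sqrt\pi$, the counterterm $g'(0)\ln\varepsilon$ turns this piece into exactly minus the regularized expression defining $C_{-1}(\varkappa)$. Letting $\varepsilon\to0$ then gives~\eqref{eq:rec_neg_1}.

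For~\eqref{eq:rec_neg_2}, put $h(t)=\frac{2}{\sqrt\pi}\cos(\varkappa t)\e^{-t^2}$ and integrate by parts against $\ln t$:
$$
\int_\varepsilon^\infty\frac{h}{t}\de t=-h(\varepsilon)\ln\varepsilon-\int_\varepsilon^\infty h'(t)\ln t\de t.
$$
Since $h(\varepsilon)\ln\varepsilon=h(0)\ln\varepsilon+o(1)$, adding $h(0)\ln\varepsilon$ cancels the divergence. Moreover $-h'(t)=\frac{2}{\sqrt\pi}\{\varkappa\sin(\varkappa t)+2t\cos(\varkappa t)\}\e^{-t^2}$, which is integrable against $\ln t$ on $(0,\infty)$, so~\eqref{eq:rec_neg_2} follows.

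For the case~\eqref{eq:sing_case}, set $\varkappa=0$ in~\eqref{eq:rec_neg_2}, giving $C_{-1}(0)=\frac{4}{\sqrt\pi}\int_0^\infty t\e^{-t^2}\ln t\de t$. The substitution $u=t^2$ turns this into $\frac{1}{\sqrt\pi}\int_0^\infty\e^{-u}\ln u\de u=\frac{\Gamma'(1)}{\sqrt\pi}=-\frac{\gamma}{\sqrt\pi}$. Finally, $\widetilde S_{-1}(0)=0$, so~\eqref{eq:rec_neg_1} yields $\widetilde C_{-1}(0)=-\frac{2}{\sqrt\pi}+\frac{\gamma}{\sqrt\pi}=\frac{\gamma-2}{\sqrt\pi}$.
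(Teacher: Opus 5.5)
Your proposal is correct and follows essentially the same route as the paper: one integration by parts on $[\varepsilon,\infty)$ for each identity, with the same finite-part counterterms ($-1/\varepsilon-\frac{2}{\sqrt\pi}\ln\varepsilon$ for the $t^{-2}$ integral and $+\frac{2}{\sqrt\pi}\ln\varepsilon$ for the $t^{-1}$ integral). You then set $\varkappa=0$ and use $\int_0^\infty t\e^{-t^2}\ln t\de t=-\gamma/4$, which the paper simply quotes and which you derive via $u=t^2$ and $\Gamma'(1)=-\gamma$.
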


\begin{proof}
Let $\varepsilon>0$. Integration by parts yields
\begin{align}\label{eq:int_parts_1}
\int_{\varepsilon}^\infty \frac{\cos(\varkappa t)}{t^2}\erfc(t)\de t =& \frac{\cos(\varkappa \varepsilon)\erfc(\varepsilon)}{\varepsilon} - \varkappa\int_{\varepsilon}^\infty\frac{\sin(\varkappa t)}{t}\erfc(t)\de t -\frac{2}{\sqrt\pi}\int_{\varepsilon}^\infty\frac{\cos(\varkappa t)}{t}\e^{-t^2}\de t,\\
\int_{\varepsilon}^\infty\frac{\cos(\varkappa t)}{t}\e^{-t^2}\de t =& -\ln(\varepsilon)\cos(\varkappa \varepsilon)\e^{-\varepsilon^2}  + \int_{\varepsilon}^\infty \{ \varkappa\sin(\varkappa t)+2t\cos{\varkappa t}\}\e^{-t^2}\ln(t)\de t.\label{eq:int_parts_2}
\end{align}

It follows from~\eqref{eq:int_parts_2}, the expansion
\begin{equation}\label{eq:asympt_expan}
 \ln(\varepsilon)\cos(\varkappa\varepsilon)\e^{-\varepsilon^2} = -\ln(\varepsilon) + \mathcal{O}(\varepsilon)\quad\text{as}\quad\varepsilon\to0 +,
\end{equation}
and the definition of Hadamard's finite part integral~\cite[sec.~3.2]{hsiao2021boundary}, 
that
\begin{equation}\label{eq:AAA}\begin{split}
{\rm p.f.}\!\int_{0}^\infty\frac{\cos(\varkappa t)}{t}\e^{-t^2}\de t :=& \lim_{\varepsilon\to0+}\left\{\int_{\varepsilon}^\infty\frac{\cos(\varkappa t)}{t}\e^{-t^2}\de t+\ln(\varepsilon)\right\} \\
=& \int_0^\infty\{ \varkappa\sin(\varkappa t)+2t\cos(\varkappa t)\}\e^{-t^2}\ln(t)\de t ,\end{split}
\end{equation}
where the last integral above is well-defined as an improper integral. 

Similarly, by combining~\eqref{eq:int_parts_1} and~\eqref{eq:int_parts_2} and employing the  asymptotic expansion 
$$
\frac{\cos (\varkappa \varepsilon) \operatorname{erfc}(\varepsilon)}{\varepsilon}= \frac{1}{\varepsilon} -\frac{2}{\sqrt{\pi}} + \mathcal{O}(\varepsilon)\quad\text{as}\quad\varepsilon\to0+,
$$
 we obtain
\begin{equation}\label{eq:BBB}\begin{split}
{\rm p.f.}\!\int_{0}^\infty\frac{\cos(\varkappa t)}{t^2}\erfc(t)\de t  :=& \lim_{\varepsilon\to 0+}\left\{\int_{\varepsilon}^\infty\frac{\cos(\varkappa t)}{t^2}\erfc(t)\de t -\frac{2}{\sqrt{\pi}}\ln(\varepsilon)-\frac{1}{\varepsilon}\right\}\\
=-\frac{2}{\sqrt{\pi}}-\varkappa\int_{0}^\infty&\frac{\sin(\varkappa t)}{t}\erfc(t)\de t - \frac{2}{\sqrt{\pi}}\int_0^\infty\{ \varkappa\sin(\varkappa t)+2t\cos(\varkappa t)\}\e^{-t^2}\ln(t)\de t.
\end{split}\end{equation}

Therefore, clearly, \eqref{eq:rec_neg_2} follows from \eqref{eq:AAA}, while~\eqref{eq:rec_neg_1} follows from~\eqref{eq:BBB}.

Finally,~\eqref{eq:sing_case} is obtained by setting $\varkappa=0$ in~\eqref{eq:AAA} and~\eqref{eq:BBB} and using the identity
$$
\int_{0}^\infty t\ln(t)\e^{-t^2}\de t = -\frac{\gamma}{4}.
$$
  \end{proof}

\section{Bounds for \(C^\mathfrak{q}\)-norms of radial functions}
The following result will be used repeatedly to bound the norm of  radial factors making up the integrand~\eqref{eq:reg_integral_analysis}, and we therefore state it as a proposition.
\begin{proposition}\label{lem:rad_funcs}
Let $u:\R\to\R$ be an analytic even function and define
$$
f(x) := u(|x|), \qquad x\in \R^3.
$$
Then $f:\R^3\to\R$ is analytic and for all $r>0$ and $\mathfrak{q}\in\N$ there exists a constant $C=C(\mathfrak{q})>0$ such that
\begin{equation}\label{eq:sought_enequality}
\|f\|_{C^{\mathfrak{q}}(\overline{B_r(0)})}\le C\,\|u\|_{C^{2\mathfrak{q}}([0,r])}.
\end{equation}
\end{proposition}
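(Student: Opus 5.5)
The approach is to pass through the auxiliary function $g(s):=u(\sqrt{s})$, $s\ge 0$, so that $f(x)=g(|x|^2)$ is the composition of a one-variable function with the polynomial $x\mapsto |x|^2$. Since $u$ is even and analytic, its Taylor series contains only even powers, $u(t)=\sum_k c_k t^{2k}$. Hence $g(s)=\sum_k c_k s^k$ is analytic near $s=0$, and it is analytic away from $0$ because $\sqrt{\cdot}$ is analytic there. Analyticity of $f$ then follows at once, since $f$ is a composition of analytic maps. The quantitative bound~\eqref{eq:sought_enequality} is obtained in two steps:
\begin{equation*}
\|f\|_{C^{\mathfrak q}(\overline{B_r(0)})}\lesssim \|g\|_{C^{\mathfrak q}([0,r^2])}\quad\text{and}\quad \|g\|_{C^{\mathfrak q}([0,r^2])}\lesssim \|u\|_{C^{2\mathfrak q}([0,r])}.
\end{equation*}

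\textbf{Step 1 (multivariate chain rule).} By Faà di Bruno, each derivative $\partial^\beta f(x)$ with $|\beta|=\ell\le\mathfrak q$ is a finite sum of terms $g^{(j)}(|x|^2)$, $j\le \ell$, multiplied by products of derivatives of $|x|^2$. The first derivatives are $2x_i$, the second derivatives are $2\delta_{ij}$, and all higher derivatives vanish. Each term is therefore $g^{(j)}(|x|^2)$ times a monomial in $x$ of degree at most $2j-\ell$, with combinatorial coefficients depending only on $\mathfrak q$. On $\overline{B_r(0)}$ these monomials are bounded by powers of $r$. To obtain a constant depending only on $\mathfrak q$, the $r$-powers must be absorbed in Step 2. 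The cleanest route is to bound directly $|x|^{2j-\ell}|g^{(j)}(|x|^2)|$, i.e.\ to estimate $t^{a}|g^{(j)}(t^2)|$ with $a\le j$, in terms of derivatives of $u$.

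\textbf{Step 2 (derivatives of $g$ through $u$).} This is the main obstacle, because $\sqrt{s}$ is singular at $0$ and the chain rule cannot be applied naively. The plan is to use the representation
\begin{equation*}
g'(s)=\frac{u'(t)}{2t},\quad t=\sqrt{s},\qquad\text{so that}\qquad g^{(j)}(t^2)=(D^j u)(t),\quad D:=\frac{1}{2t}\frac{\de}{\de t},
\end{equation*}
and then to write $u'(t)/t=\int_0^1 u''(\tau t)\de\tau$, which uses $u'(0)=0$ by evenness. Iterating this Hadamard-type identity, $u'$ odd gives $\int_0^1 u''(\tau t)\de\tau$ even, so the same trick applies again. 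By induction, $(D^j u)(t)$ is a weighted average of $u^{(\ell)}(\tau t)$ over $\tau\in[0,1]^j$, multiplied by bounded polynomial weights, with $\ell\le 2j$. This yields $\sup_{[0,r]}|D^j u|\le C_j\|u\|_{C^{2j}([0,r])}$ with $C_j$ independent of $r$. The intermediate terms carrying $t^a$ factors are handled by noting that they arise from the same expansion with fewer $D$'s, so each term is controlled by $\|u\|_{C^{2\mathfrak q}([0,r])}$.

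One subtlety will need verification: the monomial factors $|x|^{2j-\ell}$ may contribute $r$-powers that do not cancel. If they do not cancel, I would instead apply Faà di Bruno to $f(x)=u(|x|)$ directly on the region $|x|\ge$ small, and use the averaging representation only near the origin. In either case the loss of exactly a factor of $2$ in the derivative count, $C^{2\mathfrak q}$ versus $C^{\mathfrak q}$, comes from the $D$ operator consuming two $t$-derivatives per $s$-derivative.
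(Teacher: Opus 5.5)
Your proposal is correct and follows essentially the paper's route: write $f(x)=g(|x|^2)$, apply Faà di Bruno, and bound $g^{(j)}(t^2)$ uniformly in $t$ by $\sup_{[0,t]}|u^{(2j)}|$ through an integral representation. Your iterated averaging for $D=\frac{1}{2t}\frac{d}{dt}$ plays the same role as the paper's kernel formula $\frac{1}{(2t)^{2j-1}(j-1)!}\int_0^t (t^2-s^2)^{j-1}u^{(2j)}(s)\,ds$. The $r$-powers from the monomial factors do not cancel in general (already $\partial_i f=2x_i\,g'(|x|^2)$ for $\mathfrak{q}=1$). The paper resolves this exactly by your fallback: it uses the averaging bound on $\overline{B_1(0)}$, and Faà di Bruno for $u(|x|)$ directly on $1\le |x|\le r$, where all derivatives of $|x|$ are bounded. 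You should commit to that split rather than the vague ``fewer $D$'s'' argument.
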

\begin{proof} Since $u$ is even and analytic, there exists an analytic function $\tilde u:[0, \infty) \to \mathbb{R}$  such that $u(t)=\tilde u(t^2)$ (see, e.g.~\cite{whitney1943differentiable}). The mapping $x\mapsto |x|^2 = x\cdot x$ is a polynomial of degree two. Therefore, $f$ is a composition of analytic functions and hence analytic. In particular,  $f\in C^{\mathfrak{q}}(\overline{B_r(0)}))$ for all $r>0$ and $\mathfrak{q}\in\N$.

For any multi-index $\alpha=(\alpha_1,\alpha_2)\in\N_0^2$ with $|\alpha|=\alpha_1+\alpha_2=L\le \mathfrak{q}$,  Faà di Bruno's formula yields
$$
\der^\alpha f(x) = \der^\alpha\, \tilde u(|x|^2)=\sum_{\ell=\lceil L/2\rceil}^{L} P_{\alpha,\ell}(x)\,\tilde u^{(\ell)}(|x|^2),
$$
where $P_{\alpha,\ell}$ are homogeneous polynomials of degree  $\ell$. Since $|P_{\alpha,\ell}(x)|\le C_{\alpha, \ell}\,|x|^{\ell}$ for some constants $C_{\alpha,\ell}>0$ and since the functions $\tilde u^{(\ell)}$ are bounded on $[0,r]$, we obtain
\begin{equation}\label{eq:interm_lemma_0}
\sup_{x\in\overline{B_r(0)}}|\der ^\alpha f(x)|\leq \widetilde C_{\alpha}\max\{1,r^{L}\}\max_{\ell\in\{0,\ldots L\}}\sup_{t\in[0,r]}|\tilde u^{(\ell)}(t^2)|,\quad \widetilde C_{\alpha} := \sum_{\ell=\lceil L/2\rceil}^L C_{\alpha,\ell},\quad 0\leq|\alpha|= L\leq \mathfrak{q}.
\end{equation}

On the other hand, using induction, it can be shown that
\begin{equation}\label{eq:rep_der}
\tilde u^{(\ell)}(t^2)=\frac{1}{(2 t)^{2\ell-1}(\ell-1)!} \int_0^t\left(t^2-s^2\right)^{\ell-1} u^{(2\ell)}(s) \de s,\quad t\geq0,\quad\ell\in\N.
\end{equation}

Indeed, for $\ell=1$, formula~\eqref{eq:rep_der} gives
$$
\tilde u^{(1)}(t^2)=\frac{1}{(2 t)^{1}0!} \int_0^t\left(t^2-s^2\right)^0 u^{(2)}(s) \de s=\frac{u^{(1)}(t)-u^{(1)}(0)}{2t}=\frac{u^{(1)}(t)}{2t},
$$
where we used that $u^{(1)}$ is odd, and hence $u^{(1)}(0)=0$. On the other hand, differentiating the identity $u(t)=\tilde u(t^2)$ yields $u^{(1)}(t)=2t\tilde u^{(1)}(t^2)$ which is consistent with the expression above. Note that the singularity at $t=0$ is removable. This verifies the formula for $\ell=1$ and establishes the base case of the induction.

Assume now that~\eqref{eq:rep_der} holds for  $\ell=n\in\N$. Differentiating both sides of~\eqref{eq:rep_der} with $\ell=n$,  we get 
\begin{equation}\label{eq:interm_lemma}
2t\,\tilde u^{(n+1)}(t^2) = \frac{1}{2^{2n-1}t^{2n}(n-1)!}\int_0^t\left\{s^2(2n-1)-t^2\right\}\left(t^2-s^2\right)^{n-2}  u^{(2n)}(s) \de s.
\end{equation}

On the other hand, integrating by parts twice and using the fact that $u^{(2n+1)}(0)=0$, which follows from the evenness of $u$, we obtain
\begin{gather*}
\int_0^t\left(t^2-s^2\right)^{n} u^{(2n+2)}(s) \de s = -(t^2-s^2)^nu^{(2n+1)}(0)\Big|_{s=0}^{s=t}+2n\int_0^t s(t^2-s^2)^{n-1}u^{(2n+1)}(s)\de s\\
=2n t(t^2-s^2)^{n-1}u^{(2n)}(s)\Big|_{s=0}^{s=t}+2n\int_0^t (s^2(2n-1)-t^2)(t^2-s^2)^{n-2}u^{(2n)}(s)\de s,
\end{gather*}
where the boundary terms in the last integration by parts also vanish. Using this last identity in~\eqref{eq:interm_lemma}, we arrive at
$$
\tilde u^{(n+1)}(t^2)=\frac{1}{(2t)^{2n+1}n!}\int_0^t\left(t^2-s^2\right)^{n} u^{(2n+2)}(s) \de s.
$$
Since the formula~\eqref{eq:rep_der} holds for $\ell=1$ and the induction step shows that, if it holds for $\ell=n$, it also holds for $\ell=n+1$, the result follows by induction for all $\ell\in\N$.

Expanding the kernel $(t^2-s^2)^{\ell-1}$ in~\eqref{eq:rep_der} using the binomial theorem, we readily obtain
$$
|\tilde u^{(\ell)}(t^2)|\leq \frac{1}{2^{2\ell-1}(\ell-1)!}\left(\sum_{i=0}^{\ell-1}{{\ell-1}\choose{i}}\frac{(-1)^i}{2i+1}\right) \sup_{s\in[0,t]}|u^{(2\ell)}(s)|.
$$
This together with the identity $\tilde u(t^2)=u(t)$ yields that there exist  constants $C'_\ell>0$ such that
$$
\sup_{t\in[0,r]}|\tilde u^{(\ell)}(t^2)|\leq  C'_\ell \|u\|_{C^{2\ell}([0,r])},\quad \ell\in \N_0.
$$
Therefore, 
$$
 \max_{\ell\in\{0,\ldots,L\}}\sup_{t\in[0,r]}|\tilde u^{(\ell)}(t^2)|\leq  \widehat C_L \|u\|_{C^{2L}([0,r])},\quad 0\leq L\leq \mathfrak{q},
$$
with $\widehat C_L := \max_{\ell\in\{0,\ldots,L\}} C'_\ell.$
Using this last inequality in~\eqref{eq:interm_lemma_0} we  arrive at 
$$
\sup_{x\in\overline{B_r(0)}}|\der ^\alpha f(x)|\leq \widetilde C_\alpha \widehat C_L\max\{1,r^{L}\}  \|u\|_{C^{2L}([0,r])},\quad 0\leq |\alpha|=L\leq \mathfrak{q},
$$
from where, taking the maximum over all the multi-indices $\alpha\in\N_0^2$ such that $|\alpha|=L\in\{0,\ldots, \mathfrak{q}\}$, yields 
\begin{equation}\label{eq:almost_there}
\|f\|_{C^{\mathfrak{q}}(\overline{B_r(0)})}\le C'(\mathfrak{q})\,\max\{1,r^{\mathfrak{q}}\}\,\|u\|_{C^{2\mathfrak{q}}([0,r])},
\end{equation}
where $C'(\mathfrak{q}) := \max_{\alpha\in\N^2_0:0\leq|\alpha|\leq \mathfrak{q}}\{\widetilde C_\alpha\widehat C_{|\alpha|}\}$. 

We now distinguish two cases: $r \leq 1$ and $r > 1$. In the former case, the desired inequality~\eqref{eq:sought_enequality} follows directly from~\eqref{eq:almost_there} with $C(\mathfrak{q}):= C'(\mathfrak{q})$.
In the latter case, it follows from Faà di Bruno’s formula, together with the smoothness of the mapping $x \mapsto |x|$ and the boundedness of all its derivatives on the set $\{x \in \R^3 : |x| \geq 1\}$, that there exists $C''(\mathfrak{q})>0$ such that
$$\|f\|_{C^{\mathfrak{q}}\!\left(\overline{B_r(0)\setminus B_1(0)}\right)} \leq C''(\mathfrak{q})\, \|u\|_{C^{\mathfrak{q}}([1,r])} \leq C''(\mathfrak{q})\, \|u\|_{C^{2\mathfrak{q}}([1,r])}.$$
Combining this estimate with~\eqref{eq:almost_there} for $r=1$, yields the desired inequality with $C(\mathfrak{q}) :=\max\{C'(\mathfrak{q}),C''(\mathfrak{q})\}$. The proof is now complete.
\end{proof}

\bibliographystyle{abbrv}
\bibliography{References}
\end{document}